\newtheorem{theorem}{Theorem}[section]
\newtheorem{proposition}[theorem]{Proposition}
\newtheorem{lemma}[theorem]{Lemma}
\theoremstyle{definition}
\newtheorem{remark}[theorem]{Remark}
\newtheorem{definition}[theorem]{Definition}
\numberwithin{equation}{section}
 \newcommand{\<}{\left\langle}
\renewcommand{\>}{\right\rangle}
\newcommand{\eps}{\varepsilon}
\newcommand{\norm}[1]{\left\Vert#1\right\Vert}
\newcommand{\be} {\begin{equation}}
\newcommand{\ee} {\end{equation}}
\newcommand{\bea} {\begin{eqnarray}}
\newcommand{\eea} {\end{eqnarray}}
\newcommand{\Bea} {\begin{eqnarray*}}
\newcommand{\Eea} {\end{eqnarray*}}
\newcommand{\pa} {\partial}
\newcommand{\al} {\alpha}
\newcommand{\de} {\delta}
\newcommand{\ga} {\gamma}
\newcommand{\Ga} {\Gamma}
\newcommand{\Om} {\Omega}
\newcommand{\De} {\Delta}
\newcommand{\la} {\lambda}
\newcommand{\si} {\sigma}
\newcommand{\no} {\nonumber}
\newcommand{\noi} {\noindent}
\newcommand{\lab} {\label}
\newcommand{\va} {\varphi}
\newcommand{\f}{\frac}
\newcommand{\R}{\mathbb R}
\newcommand{\N}{\mathbb N}
\newcommand{\Rn}{\mathbb R^N}
\newcommand{\Iom}{\int_{\Omega}}
\newcommand{\tl}{\tilde}
\def\ga{\alpha}            \def\gg{\gamma}
       \def\gd{\delta}      
\def\gth{\theta}                         \def\vge{\varepsilon}
\def\gf{\phi}       \def\vgf{\varphi}    
      \def\gk{\kappa}      \def\gl{\lambda}
     \def\Gd{\Delta}
\def\Gw{\Omega}              
\def\CT{{\mathcal T}}
\def\BBG {\mathbb G}       
   \def\BBN {\mathbb N}
   \def\BBX {\mathbb X}
\def\GTM {\mathfrak M}
\newcommand{\supp}{\mathrm{supp}}
\def\fw{{(-\Gd)^s}}
\newcommand{\unl}{\underline}
\begin{document}

\title[Existence and multiplicity of solutions to fractional Lane-Emden  systems]{On the existence and multiplicity of solutions to fractional Lane-Emden elliptic systems involving measures}

\author{ Mousomi Bhakta}
\address{M. Bhakta, Department of Mathematics, Indian Institute of Science Education and Research, Dr. Homi Bhabha Road,
Pune 411008, India}
\email{mousomi@iiserpune.ac.in}

\author{ Phuoc-Tai Nguyen}
\address{P. T. Nguyen, Department of Mathematics and Statistics, Masaryk University, Brno, Czech Republic}
\email{ptnguyen@math.muni.cz}

\subjclass[2010]{Primary 35R11, 35J57, 35J50, 35B09, 35R06}
\keywords{nonlocal, system,  existence, multiplicity, linking theorem, measure data, source terms, positive solution.}
\date{}
\begin{abstract} We study positive solutions to the fractional Lane-Emden system 
\begin{equation*} \tag{S}\label{S}  \left\{  \begin{aligned} 
(-\Delta)^s u &= v^p+\mu \quad &&\text{in } \Omega \\
(-\Delta)^s v &= u^q+\nu \quad &&\text{in } \Omega\\
u = v &= 0 \quad &&\text{in } \Omega^c={\mathbb R}^N \setminus \Omega, 
\end{aligned} 
\right. \end{equation*} 
where $\Omega$ is a $C^2$ bounded domains in $\Rn$, $s\in(0,1)$,  $N>2s$, $p>0$, $q>0$ and $\mu,\, \nu$ are positive measures in $\Om$. We prove the existence of the minimal positive solution of \eqref{S} under a smallness condition on the total mass of $\mu$ and $\nu$.  Furthermore, if $p,q \in (1,\frac{N+s}{N-s})$ and $0 \leq \mu,\, \nu\in L^r(\Om)$ for some $r>\frac{N}{2s}$ then we show the existence of at least two positive solutions of \eqref{S}. We also discuss the regularity of the solutions.
\end{abstract}

\maketitle

\tableofcontents

\section{Introduction and main results}
In this article we consider elliptic system of the type
\begin{equation} \label{eq:system} \left\{ \begin{aligned} 
(-\Delta)^s u &= v^p + \mu \quad &&\text{in } \Omega \\
(-\Delta)^s v &= u^q + \nu  \quad &&\text{in } \Omega \\
u,\,v &\geq 0 \quad &&\text{in } \Omega \\
u = v &= 0 \quad &&\text{in } \Omega^c=\R^N \setminus \Omega,
\end{aligned} 
\right. \end{equation}
where $\Gw$ is a $C^2$ bounded domain in $\Rn$, $s\in(0,1)$, $N>2s$, $p>0$, $q>0$ and  $\mu,\, \nu$ are positive Radon measures in $\Om$. 
Here $(-\De)^s$ denotes the  fractional Laplace operator defined as follows
$$ (-\De)^s u(x)=\lim_{\eps\to 0}(-\De)^s_{\eps}u(x), $$
where
\begin{equation} \label{De-u}
  \left(-\Delta\right)_\vge^su(x): = a_{N,s}\int_{\mathbb{R}^N\setminus B_{\eps}(x)}\frac{u(x)-u(y)}{|x-y|^{N+2s}}dy,
\end{equation}
and  $a_{N,s}= \frac{2^{2s}s\Ga(N/2+ s)}{\pi^{N/2}\Ga(1-s)}$. 

When $s=1$, $\fw$ coincides the classical laplacian $-\Gd$ and the Lane-Emden system
\begin{equation} \label{loc-system} \left\{ \begin{aligned} 
-\Delta u &= v^p + \mu \quad &&\text{in } \Omega, \\
-\Delta v &= u^q + \nu\quad &&\text{in } \Omega, 
\end{aligned} 
\right. \end{equation}
has been studied extensively in the literature (see \cite{BVY, Co, FF, GN, HMMY, Mit-1, PQS,  RZ, SZ} and the references therein).
Bidaut-V\'eron and Yarur \cite{BVY} provided various necessary and sufficient conditions in terms of estimates on the Green kernel for the existence of solutions of \eqref{loc-system}.  When $\mu=\nu=0$, the structure of solution of \eqref{loc-system} has been better understood according to the relation between $p,q$ and $N$. More precisely, if  $\frac{1}{p+1}+\frac{1}{q+1}\leq \frac{N-2}{N}$ then \eqref{loc-system} admits some positive (radial, bounded) classical solutions in $\Rn$ (see \cite{SZ}). On the other hand, the so-called Lane-Emden conjecture states that if \be\lab{7-8-1}
\frac{1}{p+1}+\frac{1}{q+1}> \frac{N-2}{N}\ee
then there is no nontrivial classical solution of \eqref{loc-system} in $\Rn$. The conjecture is known to be true for radial solutions in all dimensions (see \cite{Mit-1}). In the nonradial case, partial results have been achieved. Nonexistence was proved in \cite{FF,RZ} for
$(p, q)$ in certain subregions of \eqref{7-8-1}.

For nonlocal case, i.e. $s\in(0,1)$, Quaas and Xia \cite{QX3} showed the existence of at least one positive viscosity solution for the system of the type 
\begin{equation} \label{sys0} \left\{ \begin{aligned} 
(-\Delta)^s u &= v^p \quad &&\text{in } \Omega, \\
(-\Delta)^s v &= u^q \quad &&\text{in } \Omega, \\
u = v &= 0 \quad &&\text{in } \Omega^c.
\end{aligned} 
\right. \end{equation}
It has been proved in \cite{QX1} that under some conditions on the exponents $p$ and $q$, system \eqref{sys0} does not admit any positive bounded viscosity solution. 
We also refer \cite{DKK, QX2} for further results in this directions.

Nonlocal equations with measure data have been investigated in \cite{BN, CFV, CQ, CV, TV}  and the references therein. More precisely, fractional elliptic equations with interior measure data were studied  in \cite{CV,CQ}, while the equations with measure boundary data were carried out in \cite{TV} (for absorption nonlinearity)  and in \cite{BN} (for source nonlinearity).  \smallskip

To the best of our knowledge, there has been no result concerning nonlinear fractional elliptic systems with measure data in the literature so far. The present paper can be regarded as one of the first publications in this direction and our main contribution is the existence and multiplicity result (see Theorem \ref{2nd sol}) which is new even in the local case $s=1$. Our approach is based on a combination of the theory of PDEs with measure data and variational method (in particular Linking theorem). \smallskip


Before stating the main results, we introduce necessary notations.

For $\phi\geq 0$, denote by $\mathfrak{M}(\Om,\phi)$ the space of Radon measures $\tau$ on $\Om$ satisfying $\int_{\Om}\phi\, d |\tau|<\infty$ and  by $\mathfrak{M}^+(\Om,\phi)$ the positive cone of $\mathfrak{M}(\Om,\phi)$. For $\kappa>0$, denote by $L^\kappa(\Omega,\phi)$ the space of measurable functions $w$ such that $\int_{\Omega}|w|^\kappa \phi dx <\infty$. We denote  $\de(x)=\text{dist}(x,\pa\Om)$. When $\phi=\delta^s$, we can define the space $\mathfrak{M}(\Om,\delta^s)$ and $L^\kappa(\Omega, \de^s)$. Let $G_s=G_s^\Omega$ be the Green kernel of $(-\De)^s$ in $\Om$. We denote the associated Green operator $\mathbb{G}_s$ as follows:
$$ \begin{aligned} \mathbb{G}_s[\tau](x) &:= \int_{\Om}G_s(x, y)d\tau(y), \quad \tau\in\mathfrak{M}(\Om,\de^s).
\end{aligned} $$
Important estimates concerning the Green kernel are presented  Section 2.

\begin{definition} \label{defsol}(Weak solution) Let $\mu,\, \nu \in \GTM^+(\Gw,\gd^s)$. We say that  $(u, v)$ is a weak solution of \eqref{eq:system} if $u,\, v \in L^1(\Gw)$, $v^p,\, u^q \in L^1(\Gw,\gd^s)$ and 
\begin{equation} \label{intN}\left\{ \begin{aligned}
 \int_{\Gw} u \fw \xi dx &= \int_{\Gw}v^p\xi dx + \int_{\Gw}  \xi \,d\mu, \\ 
	\int_{\Gw} v \fw \xi dx &= \int_{\Gw}u^q\xi dx + \int_{\Gw}  \xi \,d\nu, 
	\end{aligned}\right. \quad \forall \xi \in \BBX_s(\Gw),
	\end{equation}
\end{definition}
where $\BBX_s(\Gw)\subset C(\Rn)$ denotes the space of test functions $\xi$ satisfying

(i) $\supp (\xi) \subset \bar \Gw$,

(ii) $\fw \xi(x)$ exists for all $x \in \Gw$ and $|\fw \xi(x)| \leq C$ for some $C>0$,

(iii) there exists $\vgf \in L^1(\Gw,\gd^s)$ and $\varepsilon_0>0$ such that $|(-\Gd)_\varepsilon^s \xi| \leq \vgf$ a.e. in $\Gw$, for all $\varepsilon \in (0,\varepsilon_0]$.

\begin{remark} We observe that, by \cite[Proposition A]{TV}, $(u, v)$ is a weak solution of \eqref{eq:system} if and only if 
\begin{equation} \label{uGM}  u = \BBG_s[v^p] + \BBG_s[\mu]\quad\text{and}\quad v=\BBG_s[u^q] + \BBG_s[\nu].
\end{equation}
\end{remark}

%
%

Define \be\lab{p-s}N_s:=\frac{N+s}{N-s}. \ee

%

Our first result is the existence of the minimal weak solutions of \eqref{eq:system}.
\begin{theorem} (Minimal solution) \label{existcoup}
	Let $p,q>0$ with $p\leq q$, $pq \neq 1$ and $q\frac{p +1}{q+1} <N_s$. 
	Assume $\mu, \nu \in \GTM^+(\Gw,\gd^s)$ and $\BBG_s[\mu]\in L^{q}(\Gw,\gd^{s})$. Then  system \eqref{eq:system} admits a positive weak solution $(\underline u_{\mu}, \underline v_{\nu})$ for $\|\mu\|_{\GTM(\Gw,\gd^s)}$ and $\|\nu\|_{\GTM(\Gw,\gd^s)}$ small if $pq>1$ and  for any $\mu,\nu \in \GTM^+(\Gw,\gd^s) $ if $pq<1$. This solution satisfies 
	$$ \underline u_\mu \geq \BBG_s[\mu], \quad \underline v_\nu \geq \BBG_s[\nu] \quad \text{a.e. in } \Omega. $$
	
	Moreover, it is the minimal positive weak solution of \eqref{eq:system} in the sense that if $(u,v)$ is a positive weak solution of \eqref{eq:system} then $\underline u_\mu \leq u$ and $\underline v_\nu \leq v$ a.e. in $\Gw$.
	
	In addition, if $q<N_s$ then there exists a positive constant $K=K(N,s,p,q, \|\mu\|_{\GTM(\Gw,\gd^s)},\|\nu\|_{\GTM( \Gw,\gd^s)})$ such that $K \to 0$ as $(\|\mu\|_{\GTM( \Gw,\gd^s)},\|\nu\|_{\GTM(\Gw,\gd^s)}) \to (0,0)$ and
	\begin{equation} \label{leub}
	\max\{ \underline u_\mu, \underline v_\nu \} \leq K \BBG_s[\tilde\mu+\tilde\nu] \quad \text{a.e. in } \Omega,
	\end{equation}
	where $\tl\mu=\frac{\mu}{\|\mu\|_{\GTM(\Gw,\gd^s)}}$ and $\tl\nu=\frac{\nu}{\|\nu\|_{\GTM(\Gw,\gd^s)}}$.
\end{theorem}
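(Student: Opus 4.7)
The plan is to construct $(\underline{u}_\mu, \underline{v}_\nu)$ via a monotone iteration scheme built from the integral formulation \eqref{uGM}. Set $u_0:=\mathbb{G}_s[\mu]$, $v_0:=\mathbb{G}_s[\nu]$ and inductively
\begin{equation*}
u_{n+1} := \mathbb{G}_s[v_n^p] + \mathbb{G}_s[\mu], \qquad v_{n+1} := \mathbb{G}_s[u_n^q] + \mathbb{G}_s[\nu].
\end{equation*}
Since $\mathbb{G}_s$ is order-preserving on $\mathfrak{M}^+(\Omega,\delta^s)$, an easy induction shows $u_n\leq u_{n+1}$ and $v_n\leq v_{n+1}$ a.e. in $\Omega$. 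The case $pq<1$ is the easiest: standard Young/Hölder arguments using the Green kernel estimates from Section 2 yield an a priori bound independent of the size of $\mu,\nu$, since the nonlinear feedback is sublinear.

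For the more delicate case $pq>1$, I would look for a supersolution of the form $U=C_1\mathbb{G}_s[\tilde\mu+\tilde\nu]$, $V=C_2\mathbb{G}_s[\tilde\mu+\tilde\nu]$, where $\tilde\mu,\tilde\nu$ are the normalized measures. The essential ingredient is the iterated Green-kernel estimate
\begin{equation*}
\mathbb{G}_s\bigl[(\mathbb{G}_s[\tau])^\alpha\bigr](x) \leq C\,\|\tau\|_{\mathfrak{M}(\Omega,\delta^s)}^{\alpha-\beta}\,\bigl(\mathbb{G}_s[\tau](x)\bigr)^\beta
\end{equation*}
valid for appropriate ranges of $\alpha,\beta\geq 0$; this type of bound follows from the 3G-type inequality together with weighted $L^\kappa$-mapping properties of $\mathbb{G}_s$ into $L^\kappa(\Omega,\delta^s)$ for $\kappa<N_s$ (both of which I take as standard consequences of the material in Section~2). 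Chaining two such estimates, one through the $v$-equation (exponent $p$) followed by the $u$-equation (exponent $q$), yields a closed inequality of the shape
\begin{equation*}
\mathbb{G}_s\bigl[\mathbb{G}_s[\tilde\sigma]^p\bigr]^q \;\lesssim\; \|\tilde\sigma\|_{\mathfrak{M}}^{\gamma}\,\mathbb{G}_s[\tilde\sigma]
\end{equation*}
precisely when $q(p+1)/(q+1)<N_s$, which is exactly the hypothesis. Choosing the total masses $\|\mu\|,\|\nu\|$ small (so that the nonlinear remainder is absorbed), $(U,V)$ becomes a supersolution of the iteration, and by induction $u_n\leq U$, $v_n\leq V$.

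Once uniform boundedness is in hand, monotone convergence produces $\underline{u}_\mu:=\lim u_n$ and $\underline{v}_\nu:=\lim v_n$ in $L^1(\Omega)$, with $v_n^p\to\underline v_\nu^p$ in $L^1(\Omega,\delta^s)$ and similarly for $u_n^q$. Passing to the limit in the integral equation yields \eqref{uGM}, and by the remark following Definition~\ref{defsol} this is equivalent to $(\underline u_\mu,\underline v_\nu)$ being a weak solution. The a.e.\ lower bounds by $\mathbb{G}_s[\mu]$ and $\mathbb{G}_s[\nu]$ are built into $u_0,v_0$ and preserved under the iteration.

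Minimality is immediate: if $(u,v)$ is any positive weak solution, then $u\geq \mathbb{G}_s[\mu]=u_0$ and $v\geq \mathbb{G}_s[\nu]=v_0$ from \eqref{uGM}, and monotonicity of $\mathbb{G}_s$ together with the recursion gives $u_n\leq u$, $v_n\leq v$ for all $n$ by induction; letting $n\to\infty$ gives $\underline u_\mu\leq u$, $\underline v_\nu\leq v$. Finally, for the extra estimate \eqref{leub} under the stronger assumption $q<N_s$, we notice that the supersolution construction above in fact produces $C_1,C_2=K(N,s,p,q,\|\mu\|,\|\nu\|)$ with $K\to 0$ as the masses tend to zero, because under $q<N_s$ the relevant Green-kernel mapping property holds with a constant controlled directly by a positive power of the total mass; taking the supremum of $C_1,C_2$ gives the stated bound. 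The main obstacle I expect is pinning down the sharp form of the iterated Green-kernel inequality above and verifying that the hypothesis $q(p+1)/(q+1)<N_s$ is exactly what makes the supersolution ansatz close up; the rest of the argument is a rather standard monotone iteration.
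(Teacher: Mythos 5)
Your monotone iteration and the minimality argument coincide with the paper's Lemma \ref{compa}, and your overall strategy (build a supersolution from Green-kernel estimates, then iterate) is also the paper's. The genuine gap is the supersolution itself in the case $pq>1$. With the ansatz $U=C_1\mathbb{G}_s[\tilde\mu+\tilde\nu]$, $V=C_2\mathbb{G}_s[\tilde\mu+\tilde\nu]$, the inequality $V\geq\mathbb{G}_s[U^q]+\mathbb{G}_s[\nu]$ forces the one-step estimate $\mathbb{G}_s\big[\mathbb{G}_s[\tilde\mu+\tilde\nu]^q\big]\lesssim\mathbb{G}_s[\tilde\mu+\tilde\nu]$, i.e.\ Lemma \ref{ingg} with exponent $q$, which requires $q<N_s$. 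The theorem only assumes $q\frac{p+1}{q+1}<N_s$, which allows $q\geq N_s$; in that regime $\mathbb{G}_s[\tilde\mu]^q$ need not even belong to $L^1(\Omega,\delta^s)$ for a general measure --- this is exactly why the hypothesis $\mathbb{G}_s[\mu]\in L^{q}(\Omega,\delta^{s})$ is imposed (it is automatic when $q<N_s$) --- so no choice of $C_1,C_2$ closes the system. The chained inequality you invoke, $\mathbb{G}_s[\mathbb{G}_s[\tilde\sigma]^p]^q\lesssim\|\tilde\sigma\|^{\gamma}\mathbb{G}_s[\tilde\sigma]$, does not rescue this: (a) it is not the inequality your ansatz needs (it would be relevant only if $U$ were defined through the equation as $\mathbb{G}_s[V^p]+\mathbb{G}_s[\mu]$, not as an explicit multiple of $\mathbb{G}_s[\tilde\sigma]$), and (b) with right-hand exponent $1$ it requires $q(p-N_s+1)<1$ (Lemma \ref{G3}), which does not follow from $q\frac{p+1}{q+1}<N_s$: the hypothesis only yields $q(p-N_s+1)<N_s$.

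The paper's construction avoids any exponent-$q$ single step: it works with the reference measure $\Psi:=\mathbb{G}_s[\vartheta_1\mu]^q+\vartheta_2\nu$, which lies in $\mathfrak{M}^+(\Omega,\delta^s)$ precisely because $\mathbb{G}_s[\mu]\in L^q(\Omega,\delta^s)$, sets $V:=A\mathbb{G}_s[\kappa\Psi]$ and defines $U:=\mathbb{G}_s[V^p]+\mathbb{G}_s[\rho\mu]$. Then the only nontrivial estimate needed is the triple composition $\mathbb{G}_s[\mathbb{G}_s[\mathbb{G}_s[\Psi]^p]^q]\leq C\mathbb{G}_s[\Psi]$ of Lemma \ref{G3}, valid exactly under $q\frac{p+1}{q+1}<N_s$, while the dangerous term $\mathbb{G}_s[\mathbb{G}_s[\rho\mu]^q]=\kappa\,\mathbb{G}_s[\mathbb{G}_s[\vartheta_1\mu]^q]\leq\kappa\,\mathbb{G}_s[\Psi]$ is absorbed by the very definition of $\Psi$. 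The closure reduces to the scalar inequality $C(A^{pq}\kappa^{pq-1}+1)\leq A$, which treats $pq>1$ (small data) and $pq<1$ (all data) simultaneously; your separate ``Young/H\"older'' treatment of $pq<1$ is thus unnecessary, and as stated it is also insufficient, since for $q\geq N_s$ the integrability of $u^q$ in $L^1(\Omega,\delta^s)$ is not free. Your ansatz does work when $q<N_s$, where it essentially reproduces the paper's derivation of \eqref{leub}, but it does not prove the theorem under the stated hypotheses.
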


The existence of the second solution is stated in the following theorem.
 
\begin{theorem} (Second solution) \lab{2nd sol}
	Assume $0\leq \mu,\, \nu\in L^r(\Omega)$ for some $r>\frac{N}{2s}$ and   $1<p\leq q<N_s$, where $N_s$ is defined in \eqref{p-s}. There exists $t^*>0$ such that if $$\max\{\|\mu\|_{L^r(\Gw)},\|\nu\|_{L^r(\Gw)} \} < t^*$$ 
	then system \eqref{eq:system} 
	admits at least two positive weak solutions $(u_\mu,v_\nu)$ and $(\underline u_\mu, \underline v_{\nu})$ with  $u_\mu  \gneq \underline u_{\mu}$ and $v_\nu \gneq \underline v_{\nu}$, where $(\underline u_\mu, \underline v_{\nu})$ is the minimal solution constructed in Theorem \ref{existcoup}. 
	
	If, in addition, $\mu,\nu \in L^r(\Omega) \cap L_{loc}^\infty(\Gw)$ then $u_\mu  > \underline u_{\mu}$ and $v_\nu > \underline v_{\nu}$ in $\Gw$.
\end{theorem}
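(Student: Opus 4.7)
The strategy is the classical \emph{shift and link} scheme: subtract the minimal solution $(\underline u_\mu,\underline v_\nu)$ from Theorem~\ref{existcoup} to convert the measure data problem into a homogeneous one, then apply an indefinite linking theorem. Since $\mu,\nu\in L^r(\Gw)$ with $r>\tfrac{N}{2s}$, an $L^p$--$L^q$ bootstrap on the identities $\underline u_\mu=\BBG_s[\underline v_\nu^p]+\BBG_s[\mu]$ and $\underline v_\nu=\BBG_s[\underline u_\mu^q]+\BBG_s[\nu]$ upgrades the minimal pair to $L^\infty(\Gw)$; moreover, by \eqref{leub} both $L^\infty$ norms are small when $\|\mu\|_{L^r}+\|\nu\|_{L^r}$ is small. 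Setting $u=\underline u_\mu+U$ and $v=\underline v_\nu+V$, the pair $(u,v)$ solves \eqref{eq:system} precisely when $(U,V)$ is a nonnegative weak solution of
\begin{equation*}
\fw U=(\underline v_\nu+V)^p-\underline v_\nu^p,\qquad \fw V=(\underline u_\mu+U)^q-\underline u_\mu^q,\qquad U=V=0\ \text{in }\Gw^c.
\end{equation*}
The trivial solution corresponds to $(\underline u_\mu,\underline v_\nu)$, so we must produce a nontrivial nonnegative one.

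Because $1<p\leq q<\tfrac{N+s}{N-s}$ implies $\tfrac{1}{p+1}+\tfrac{1}{q+1}>\tfrac{N-2s}{N}$, one can choose $\ga,\gb\in(0,2s)$ with $\ga+\gb=2s$ such that the fractional Sobolev embeddings $\BBX_0^\ga(\Gw)\hookrightarrow L^{q+1}(\Gw)$ and $\BBX_0^\gb(\Gw)\hookrightarrow L^{p+1}(\Gw)$ are continuous and compact. On $E:=\BBX_0^\ga(\Gw)\times\BBX_0^\gb(\Gw)$ define
\begin{equation*}
\J(U,V)=\int_\Gw (-\Gd)^{\ga/2}U\,(-\Gd)^{\gb/2}V\,dx-\int_\Gw F_1(x,V_+)\,dx-\int_\Gw F_2(x,U_+)\,dx,
\end{equation*}
where $F_1(x,t)=\tfrac{(\underline v_\nu+t)^{p+1}-\underline v_\nu^{p+1}}{p+1}-\underline v_\nu^p t$ and $F_2$ is defined symmetrically; these primitives are nonnegative, superquadratic in their nonlinear argument, and subcritical. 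Critical points of $\J$ give weak solutions of the translated system, and testing against $(U_-,V_-)$ yields $U,V\geq 0$. The quadratic part $Q(U,V)=\int(-\Gd)^{\ga/2}U(-\Gd)^{\gb/2}V\,dx$ is strongly indefinite; decompose $E=E^+\oplus E^-$ through the eigenspaces of the self-adjoint operator that diagonalises $Q$ (the standard "swap'' operator $(U,V)\mapsto(V,U)$ conjugated by the $\ga,\gb$-Riesz isomorphisms), so that $Q$ is positive definite on $E^+$ and negative definite on $E^-$.

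Verify the Benci--Rabinowitz linking geometry: (i) there exist $\rho,\eta>0$ with $\J\geq\eta$ on $\partial B_\rho\cap E^+$, using $p,q>1$ together with the Sobolev embeddings to dominate the nonlinear terms by $Q$ near the origin; (ii) for any fixed $e\in E^+\setminus\{0\}$ there exists $R>\rho$ such that $\J\leq 0$ on $\partial\bigl((B_R\cap E^-)\oplus [0,R]e\bigr)$, using the superquadratic growth of $F_1,F_2$ at infinity (which holds since $\underline u_\mu,\underline v_\nu$ are bounded, so asymptotically $F_i$ behaves like a pure power). A Cerami sequence is bounded by the Ambrosetti--Rabinowitz-type identity adapted to the shifted nonlinearity and compactness from the subcritical embeddings gives the PS$_c$ condition at every positive level. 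The linking theorem then delivers a critical point $(U_*,V_*)$ at level $c\geq\eta>0$, hence nontrivial; testing with negative parts shows $U_*,V_*\geq 0$, and the coupling forces both to be nontrivial. Setting $u_\mu=\underline u_\mu+U_*$, $v_\nu=\underline v_\nu+V_*$ yields the desired second solution with $u_\mu\gneq\underline u_\mu$ and $v_\nu\gneq\underline v_\nu$. Under the extra hypothesis $\mu,\nu\in L^\infty_{\mathrm{loc}}(\Gw)$, interior regularity for $\fw$ and the strong maximum principle upgrade these to strict pointwise inequalities inside $\Gw$. The main obstacle is verifying the PS (Cerami) condition and the linking geometry for the asymmetric, strongly indefinite shifted functional $\J$: the perturbation coming from $\underline u_\mu,\underline v_\nu$ destroys the pure-power structure needed for a direct Ambrosetti--Rabinowitz argument, so one must carefully estimate the lower-order terms using the smallness and boundedness of the minimal pair.
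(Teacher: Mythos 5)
Your overall scheme --- translating by the minimal solution, whose $L^\infty$ bound and smallness come from Theorem \ref{existcoup} and the $L^r$-regularity of the data, and then producing a nontrivial nonnegative solution of the shifted system \eqref{MP-prob} by a linking argument for a strongly indefinite functional --- is exactly the route the paper takes, and your concluding steps (nonnegativity by testing with negative parts, nontriviality from the positive critical level and the coupling, strict inequality via interior regularity and the strong maximum principle) also match. Two implementation choices differ, and one of them creates avoidable trouble: since $p,q<N_s$ forces $p+1,\,q+1<2^*_s$, the symmetric choice $\ga=\gb=s$ (i.e. working in $X_0\times X_0$, as the paper does) already gives the compact embeddings, whereas your asymmetric splitting $\ga+\gb=2s$ may put one of the orders above $1$ when $s>\frac12$, and then the truncation argument (using $U^-$ as a test function) and the inequality $\<U^+,U^-\>\le 0$ on which your positivity proof rests are no longer available. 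Moreover, the paper does not apply an infinite-dimensional linking theorem directly: it deems none suitable for this strongly indefinite functional and instead runs a Galerkin approximation on the eigenspaces of $((-\De)^s,X_0)$ following \cite{FMR}, obtains critical points with bounds uniform in the dimension, and passes to the limit (where nontriviality of the limit requires a separate argument). If you wish to bypass that step you must cite and verify a specific strongly indefinite linking theorem valid under the Cerami condition; your sketch only names ``Benci--Rabinowitz'' without checking its hypotheses.

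The more serious gaps sit exactly where the paper spends most of its effort. First, the linking geometry near the origin does not follow from $p,q>1$ and Sobolev embeddings alone: because of the shift, $H(\unl v_{\tau\nu},t)$ behaves like $\frac p2\,\unl v_{\tau\nu}^{p-1}t^2$ as $t\to0^+$ (see \eqref{20-12-17-1}), a genuinely quadratic term, so the lower bound on $\partial B_\varrho\cap E^+$ requires the strict stability estimate \eqref{strictstab} of Proposition \ref{propstable}, which is precisely where the smallness threshold $t^*$ enters; you mention the smallness of the minimal pair but never convert it into the needed coercivity of the quadratic form. Second, boundedness of Cerami sequences is asserted via an ``Ambrosetti--Rabinowitz-type identity'', but AR fails globally for the shifted nonlinearities ($H(r,t)\le\frac1\gth h(r,t)t$ only for $t\ge T$ and bounded $r$), and the actual proof (Proposition \ref{p:30-5-1}) needs the splitting of $\Gw$ into $\{t\le T\}$ and $\{t\ge T\}$ together with H\"older estimates exploiting $p\le q$ and the uniform bound \eqref{29-5-2}; you flag this as ``the main obstacle'' but leave it unresolved. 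Finally, a critical point of the functional is a priori only a variational solution; to conclude that it is a weak solution in the sense of Definition \ref{defsol} the paper invokes the test-function analysis of \cite{A}, a step your proposal skips entirely and which would be even less straightforward in asymmetric spaces.
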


It is worth mentioning that the existence results in Theorem \ref{existcoup} and Theorem \ref{2nd sol} rely on completely different methods. More precisely, the proof of Theorem \ref{existcoup} is in spirit of \cite{BVY}, based on a delicate construction of a supersolution. The main ingredient is  a series of estimates concerning the Green kernel (see Lemmas \ref{3g}, \ref{ingg}, \ref{inggs} and  \ref{G3}). Theorem \ref{2nd sol} is obtained by using a variational approach. Because of the interplay of the two components $u$ and $v$, the analysis of the associated energy functional (see \eqref{I}) becomes complicated and consequently the Mountain-pass theorem is inapplicable. Therefore, we employ the Linking theorem instead. In order to construct the second solution of \eqref{eq:system}, we require the data $\mu$ and $\nu$ to be sufficiently regular, namely $\mu,\nu \in L^r(\Gw)$ for some $r>\frac{N}{2s}$. This enables to deduce the boundedness of the minimal solution constructed in Theorem \ref{existcoup}, which in turn allows to establish the geometry of the Linking theorem. As a consequence, we are able to prove the existence of a variational solution which is in fact a weak solution due to a result in \cite{A} and greater than the minimal solution.

The rest of the paper is organized as follows.  In Section 2, we collect some known estimates on the Green kernel from different papers and prove important estimates regarding the Green operator (see Lemmas \ref{ingg}, \ref{inggs}, \ref{G3}). These estimates are the main ingredient in the proof of Theorem \ref{existcoup} which is presented in Section 3.  In Section 4, we discuss a priori estimates, as well as regularity properties, of weak solutions.  Section 5 deals with the proof of Theorem \ref{2nd sol} which is based on the Linking theorem. 

\smallskip

{\bf Notations:} Throughout the present paper, we denote by $c,c',c_1,c_2,C,...$ positive constants that may vary from line to line. If necessary, the dependence of these constants will be made precise.

\section{Estimates on Green kernel}
%
%
%

We denote by $G_s$ the Green kernel of $(-\De)^s$ in $\Om$ respectively. More precisely, for every $y\in\Om$,
\begin{equation}\left\{ \begin{aligned}
(-\Delta)^s G_s(.,y) &= \de_y \quad &&\text{in } \Omega \\
G_s(.,y) &=0  \quad &&\text{in } \Omega^c,
\end{aligned} \right.
\end{equation}
where $\de_y$ is the Dirac mass at $y$. 

\begin{lemma} (\cite[Corollary 1.3]{ChSo1}) \label{estGM0} There exists a positive constant $c=c(N,s,\Gw)$ such that
	\begin{equation} \label{G0} \begin{aligned} c^{-1}\min\{ |x-y|^{2s-N}, &\gd(x)^s \gd(y)^s |x-y|^{-N} \} \leq G_s(x,y) \\
	&\leq c \min\{ |x-y|^{2s-N}, \gd(x)^s \gd(y)^s |x-y|^{-N} \}, \quad \forall\, x \neq y, \, x,\,y \in \Gw. 
	\end{aligned} \end{equation}
\end{lemma}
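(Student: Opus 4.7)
The statement is a classical two-sided Green-function estimate for $(-\Delta)^s$ on a $C^{1,1}$ (a fortiori $C^2$) bounded domain. The plan is to derive it from the heat-kernel representation
\begin{equation*}
G_s(x,y) = \int_0^\infty p_t^\Omega(x,y)\,dt,
\end{equation*}
where $p_t^\Omega$ is the Dirichlet heat kernel for $(-\Delta)^s$ on $\Omega$, equivalently the transition density of the symmetric $2s$-stable process killed on exiting $\Omega$. The identity follows from spectral calculus applied to the semigroup $e^{-t(-\Delta)^s_\Omega}$ together with $(-\Delta)^s_\Omega{}^{-1} = \int_0^\infty e^{-t(-\Delta)^s_\Omega}\,dt$.

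Next I would invoke the sharp Chen-Kim-Song two-sided bound: there is a constant $C>0$ such that for all $t>0$ and $x,y\in\Omega$,
\begin{equation*}
C^{-1}\Phi_t(x)\Phi_t(y)H_t(x,y) \leq p_t^\Omega(x,y) \leq C\,\Phi_t(x)\Phi_t(y)H_t(x,y),
\end{equation*}
with boundary factor $\Phi_t(x) = 1\wedge \delta(x)^s/t^{1/2}$ and free-space factor $H_t(x,y) = t^{-N/(2s)}\wedge t|x-y|^{-N-2s}$. Integrating in $t$ and splitting at the crossover scales $t\sim|x-y|^{2s}$ and $t\sim \delta(x)^{2s}\vee\delta(y)^{2s}$, a routine but slightly tedious calculation (four cases, according as each factor saturates or not) produces
\begin{equation*}
\int_0^\infty \Phi_t(x)\Phi_t(y)H_t(x,y)\,dt \;\asymp\; |x-y|^{2s-N}\wedge \delta(x)^s\delta(y)^s|x-y|^{-N},
\end{equation*}
which is exactly \eqref{G0}.

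The substantive obstacle is Step~2: the heat-kernel estimate is itself a deep potential-theoretic result, resting on the boundary Harnack principle for stable processes and a parabolic comparison near $\partial\Omega$. A more self-contained alternative would begin with the explicit Riesz-Blumenthal-Getoor formula on a ball,
\begin{equation*}
G_s^{B_r}(x,y) = \frac{\kappa_{N,s}}{|x-y|^{N-2s}}\int_0^{w(x,y)}\frac{u^{s-1}}{(1+u)^{N/2}}\,du,\qquad w(x,y) = \frac{(r^2-|x|^2)(r^2-|y|^2)}{r^2|x-y|^2},
\end{equation*}
from which \eqref{G0} on $B_r$ drops out by analyzing the regimes $w\leq 1$ and $w>1$. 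Passage to a general $C^2$ domain would then proceed by domain monotonicity against inscribed and circumscribed balls (interior region, where $|x-y|$ dominates), and by the boundary Harnack principle of Bogdan for stable processes (near-boundary region, to recover the $\delta^s$ scaling via a Martin-kernel comparison). Either route, the genuinely delicate ingredient is the uniform matching of upper and lower bounds as $x$ or $y$ approaches $\partial\Omega$; this is where the $C^2$ regularity of $\partial\Omega$ and nontrivial boundary potential theory are essential, and it is the place I would expect any honest write-up to spend most of its effort.
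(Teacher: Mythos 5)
This lemma is not proved in the paper at all: it is quoted verbatim from Chen--Song \cite[Corollary 1.3]{ChSo1}, so there is no in-paper argument to compare yours against, and for the paper's purposes a citation is the intended ``proof.'' Your sketch is nevertheless a faithful outline of how the result is established in the literature; in fact your second route (the Riesz--Blumenthal--Getoor formula for $G_s^{B_r}$, domain monotonicity in the interior regime, and the boundary Harnack principle near $\partial\Gw$) is essentially the original Chen--Song/Kulczycki argument, whereas the heat-kernel route belongs to the later work of Chen--Kim--Song and could not have been the proof behind the 1998 reference. One technical correction to the first route: the two-sided bound
\begin{equation*}
p_t^\Omega(x,y)\asymp\Big(1\wedge\frac{\delta(x)^s}{\sqrt t}\Big)\Big(1\wedge\frac{\delta(y)^s}{\sqrt t}\Big)\Big(t^{-N/(2s)}\wedge\frac{t}{|x-y|^{N+2s}}\Big)
\end{equation*}
holds only for $t$ in a bounded interval $(0,T]$; on a bounded domain one has $p_t^\Omega(x,y)\asymp e^{-\lambda_1 t}\delta(x)^s\delta(y)^s$ for large $t$, so your lower bound as stated ``for all $t>0$'' is false. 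This is harmless but must be handled: for the lower bound on $G_s$ integrate only over $(0,T]$ with $T\sim\mathrm{diam}(\Gw)^{2s}$ (which already captures the claimed magnitude, since $|x-y|,\delta(x),\delta(y)$ are bounded), and for the upper bound note that the exponential decay is dominated by the polynomial factor. Beyond that, your write-up honestly defers the genuinely deep inputs (the heat-kernel estimate, or the boundary Harnack principle and the uniform boundary matching) to external results, which is acceptable here precisely because the statement itself is an imported theorem rather than something the paper re-proves.
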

\begin{lemma} (\cite[Theorem 1.1]{ChSo1}
	There exists a positive constant $C=C(N,s,\Gw)$ such that, for every $(x,y)\in\Gw\times\Gw,\;\;x\neq y$,
	\begin{align}\label{2.33}
	G_s(x,y)&\leq C \gd(y)^{s   }|x-y|^{s-N   },\\
	\label{2.34} G_s(x,y)&\leq C \frac{\gd(y)^{s   }}{\gd(x)^{s   }}|x-y|^{2s-N}.
	\end{align}
\end{lemma}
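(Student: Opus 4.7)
The plan is to derive both inequalities directly from the upper bound in Lemma \ref{estGM0}, namely
\[
G_s(x,y) \leq c\,\min\bigl\{|x-y|^{2s-N},\; \gd(x)^s\gd(y)^s|x-y|^{-N}\bigr\},
\]
combined with the fact that $\gd$ is $1$-Lipschitz ($|\gd(x)-\gd(y)|\leq |x-y|$). In each case one chooses which of the two available upper bounds is sharper according to the relative sizes of $\gd(x)$, $\gd(y)$, and $|x-y|$.

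For \eqref{2.33}, I would split into two regions. If $\gd(y) \geq \tfrac{1}{2}|x-y|$, then $|x-y|^s \leq 2^s\gd(y)^s$, and the first form of the bound gives
\[
G_s(x,y) \leq c|x-y|^{2s-N} = c|x-y|^s\,|x-y|^{s-N} \leq 2^s c\,\gd(y)^s|x-y|^{s-N}.
\]
If instead $\gd(y) < \tfrac{1}{2}|x-y|$, then by the Lipschitz property $\gd(x) \leq \gd(y)+|x-y| \leq \tfrac{3}{2}|x-y|$, whence $\gd(x)^s \leq (3/2)^s|x-y|^s$ and the second form yields
\[
G_s(x,y) \leq c\,\gd(x)^s\gd(y)^s|x-y|^{-N} \leq c(3/2)^s\,\gd(y)^s|x-y|^{s-N}.
\]
Taking $C := c\max\{2^s,(3/2)^s\}$ gives \eqref{2.33}.

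For \eqref{2.34}, I would again distinguish cases, now comparing $\gd(x)$ with $|x-y|$. If $\gd(x) \leq |x-y|$, then $\gd(x)^{-s}|x-y|^s \geq 1$, so \eqref{2.33} (just established) implies
\[
G_s(x,y) \leq C\gd(y)^s|x-y|^{s-N} \leq C\gd(y)^s\gd(x)^{-s}|x-y|^{2s-N}.
\]
If $\gd(x) > |x-y|$, I split further: when $|x-y| < \gd(x)/2$, the Lipschitz estimate yields $\gd(y) \geq \gd(x)-|x-y| \geq \gd(x)/2$, so $\gd(y)^s\gd(x)^{-s} \geq 2^{-s}$ and the bound $G_s(x,y) \leq c|x-y|^{2s-N}$ suffices; when $\gd(x)/2 \leq |x-y| < \gd(x)$, I use the second form of the upper bound together with $\gd(x) \leq 2|x-y|$:
\[
G_s(x,y) \leq c\,\gd(x)^s\gd(y)^s|x-y|^{-N} = c\,\gd(x)^{2s}\,\gd(y)^s\gd(x)^{-s}|x-y|^{-N} \leq 2^{2s}c\,\gd(y)^s\gd(x)^{-s}|x-y|^{2s-N}.
\]

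The entire argument is essentially a bookkeeping exercise using Lemma \ref{estGM0} and the $1$-Lipschitz property of $\gd$; I anticipate no real obstacle beyond enumerating the regions correctly and choosing, in each, the tighter of the two available bounds in \eqref{G0}.
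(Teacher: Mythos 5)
Your argument is correct: each case is an application of the two-sided bound \eqref{G0} together with the $1$-Lipschitz property of $\gd$, the constants work out as stated, and the exhaustion of cases is complete (in \eqref{2.34} one could even merge your two sub-cases of $\gd(x)>|x-y|$ into the single dichotomy $\gd(x)\leq 2|x-y|$ versus $\gd(x)>2|x-y|$, but your three-region version is equally valid). Note, however, that the paper itself offers no proof of this lemma: it is quoted verbatim as Theorem 1.1 of Chen--Song \cite{ChSo1}, where it is established directly by potential-theoretic estimates for the killed stable process. So your route is genuinely different in character: rather than invoking that separate theorem, you deduce both \eqref{2.33} and \eqref{2.34} as elementary consequences of the sharp two-sided estimate already recorded as Lemma \ref{estGM0} (itself Corollary 1.3 of \cite{ChSo1}). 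What this buys is economy and self-containedness -- only one external Green-function estimate is needed instead of two, and the derivation is pure bookkeeping -- while what the citation buys the authors is brevity and fidelity to the original source, where the two displayed bounds are in fact intermediate steps toward the sharp estimate \eqref{G0} rather than corollaries of it.
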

\begin{remark}\lab{l:22-6-1} Let $\theta \in (0,1]$, then there exists a positive constant $c=c(N,s,\theta,\Gw)$ such that $\BBG_s[\gl]^\theta \geq c \gd^s$ a.e. in $\Gw$ for every measure $\gl \in \GTM^+(\Gw,\gd^s)$ such that $\| \gl \|_{\GTM(\Gw,\gd^s)}=1$. Indeed, since $|x-y|<d_\Gw:=\text{diam}(\Gw)$, applying \eqref{G0} we have 
$$ G_s(x,y) > c^{-1}\min\{ d_\Gw^{2s-N}, d_\Gw^{-N}\gd(x)^s\gd(y)^s  \}. $$	
Further, $\max\{\gd(x),\gd(y)\} < d_\Gw$ implies  
$ d_\Gw^{-N} \gd(x)^s \gd(y)^s < d_{\Gw}^{2s-N}.$	
Consequently, $G_s(x,y)>c^{-1} d_\Gw^{-N} \gd(x)^s \gd(y)^s$. It follows that
$\BBG_s[\lambda] \geq c\delta^s$ a.e. in $\Omega$. Therefore
$$ \BBG_s[\lambda]^\theta \geq c^\theta \gd^{\theta s} \geq c_1 \gd^s \quad \text{a.e. in } \Omega. 
$$
\end{remark}

\begin{remark} \label{G1} Let $\theta \in (0,1]$, then there exists a constant $c=c(N,s,\theta,\Gw)$ such that $\BBG_s[\gl]^\theta \geq c \BBG_s[1]$ in $\Gw$ for every measure $\gl \in \GTM^+(\Gw,\gd^s)$ such that $\| \gl \|_{\GTM(\Gw,\gd^s)}=1$. Indeed, by \cite[(2.18)]{CV}, we have $c_1^{-1}\gd^s \leq \BBG_s[1] \leq c_1 \gd^s$ in $\Gw$ for some constant $c_1=c_1(N,s,\Gw)$. This, combined with Remark \ref{l:22-6-1}, leads to the desired estimate.
\end{remark}
	\begin{definition}(Marcinkiewicz  space)
		Let $\Om\subset\Rn$ be a domain and $\la$ be a positive Borel measure in $\Om$. For $\kappa>1$, $\kappa'=\frac{\kappa}{1-\kappa}$ and $u\in L^1_{loc}(\Om, \la)$, we set
		$$\|u\|_{M^\kappa(\Om, \la)}:=\inf\displaystyle\bigg\{c\in[0,\infty]: \int_{E}|u|\,d\la \leq c \bigg(\int_{E}d\la\bigg)^\frac{1}{\kappa'}, \quad\forall\, E\subset\text{Borel set}\bigg\}$$
		and
		$$M^\kappa(\Om, \la):=\big\{u\in L^1_{loc}(\Om, \la): \|u\|_{M^\kappa(\Om, \la)}<\infty\big\}.$$
		$M^\kappa(\Om, \la)$ is called the Marcinkiewicz  space  with  exponent $\kappa$ (or weak $L^{\kappa}$ space) with  quasi-norm $\|.\|_{M^\kappa(\Om, \la)}$.
	\end{definition}
	The next lemma establishes a relation between Lebesgue space norm and Marcinkiewicz quasi-norm.
	
	\begin{lemma} (\cite[Lemma A.2(ii)]{BBC}) \lab{LpMk}
		Assume $1\leq q<\kappa<\infty$ and $u\in L^1_{loc}(\Om, \la)$. Then there exists $C(q,\kappa)>0$ such that for any Borel subset $E$ of $\Om$
		$$\int_{E}|u|^q\, d\la \leq C(q,\kappa)\|u\|^q_{M^{\kappa}(\Om, \la)}\bigg(\int_{E}d\la\bigg)^{1-\frac{q}{\kappa}}.$$		
	\end{lemma}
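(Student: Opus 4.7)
The plan is to combine the layer-cake representation of $\int_E |u|^q\, d\lambda$ with a weak-type tail bound extracted from the Marcinkiewicz quasi-norm, and then optimize a single truncation parameter. This reduces the lemma to a short one-variable calculus exercise once the right ingredients are in place.

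First, I would convert the definition of $\|u\|_{M^\kappa(\Omega,\lambda)}$ into the standard distribution-function estimate. Set $M := \|u\|_{M^\kappa(\Omega,\lambda)}$ (assumed finite, else there is nothing to prove) and apply the defining inequality to the super-level set $F_t := \{x \in \Omega : |u(x)| > t\}$: one has $t\,\lambda(F_t) \leq \int_{F_t} |u|\, d\lambda \leq M\,\lambda(F_t)^{1-1/\kappa}$, and after rearrangement this gives the weak-$L^\kappa$ bound $\lambda(F_t) \leq (M/t)^\kappa$ for every $t > 0$. This is the standard equivalence of the Marcinkiewicz norm with the weak-type quasi-norm $\sup_{t>0} t\,\lambda(F_t)^{1/\kappa}$, and it is precisely what introduces the exponent $\kappa$ into the final estimate.

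Next I would apply the layer-cake formula $\int_E |u|^q\, d\lambda = q\int_0^\infty t^{q-1}\, \lambda(\{x \in E : |u(x)| > t\})\, dt$ and split the $t$-integral at a free threshold $T > 0$. On $[0,T]$ I would use the trivial bound $\lambda(\{x \in E : |u(x)| > t\}) \leq \lambda(E)$, and on $(T,\infty)$ I would use $\lambda(\{x \in E : |u(x)| > t\}) \leq \lambda(F_t) \leq (M/t)^\kappa$. The convergence of $\int_T^\infty t^{q-1-\kappa}\, dt$ is exactly where the assumption $q < \kappa$ enters, producing a factor $T^{q-\kappa}/(\kappa - q)$. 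Putting the two parts together yields an upper bound of the form $\lambda(E)\, T^q + q M^\kappa T^{q-\kappa}/(\kappa - q)$, and the choice $T := M\,\lambda(E)^{-1/\kappa}$ equalizes the two summands and gives the claimed inequality with $C(q,\kappa) = \kappa/(\kappa - q)$.

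There is no serious obstacle in the argument; the only things one has to check are that the trivial cases $\lambda(E) \in \{0,\infty\}$ can be handled separately (both reduce to $0 \leq \infty$ or $0 \leq 0$) and that the optimization in $T$ is what forces the constant to depend only on $q$ and $\kappa$, and not on $M$, $\lambda(E)$ or $E$. The whole argument is robust and uses nothing about $\Omega$ or $\lambda$ beyond $\sigma$-finiteness implicit in the layer-cake formula.
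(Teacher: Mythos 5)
Your argument is correct. Note that the paper does not prove this lemma at all --- it is quoted verbatim from \cite[Lemma A.2(ii)]{BBC} --- so there is no internal proof to compare against; your layer-cake argument (pass from the quasi-norm to the weak-type bound $\la(\{|u|>t\})\le (M/t)^{\kappa}$, split the distributional integral at a threshold $T$, and take $T=M\la(E)^{-1/\kappa}$, yielding $C(q,\kappa)=\kappa/(\kappa-q)$) is precisely the standard proof one finds in that reference, and your handling of the degenerate cases $\la(E)\in\{0,\infty\}$ and of the finiteness of $\la(\{|u|>t\})$ is adequate. The only cosmetic slip is the phrase ``equalizes the two summands'': with your choice of $T$ the two terms are merely comparable (they carry the same power $M^{q}\la(E)^{1-q/\kappa}$ but different constants), which is all that is needed.
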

	
	We set
	\begin{equation} \label{k} k_{\alpha,\gamma}:=\left\{ \begin{aligned}
	&\frac{N+\alpha}{N-2s+\gamma} \quad &&\text{if } \alpha<\frac{N\gamma}{N-2s} \\
	&\frac{N}{N-2s} &&\text{otherwise}. 
	\end{aligned}
	\right. 
	\end{equation}
	
	Estimates of Green operator are presented below.
	
	\begin{lemma} (\cite[Proposition 2.2]{CV})  \label{estGM} Let $\alpha, \gg \in [0,s]$ and $k_{\alpha,\gg}$ be  as in \eqref{k}. 
		There exists $c=c(N,s,\alpha,\gg,\Gw)>0$ such that
		\begin{equation} \label{estG1} \|\BBG_s[\la]\|_{M^{k_{\alpha,\gg}}(\Gw,\gd^\alpha)}
		\leq c\|\la\|_{\GTM(\Gw,\gd^\gg)}\quad \forall\, \la \in \GTM(\Gw,\gd^\gg).
		\end{equation}
	\end{lemma}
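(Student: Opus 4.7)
The plan is to reduce the Marcinkiewicz estimate to a uniform pointwise bound on the integral of $G_s(\cdot, y)$ against $\delta^\alpha$ over an arbitrary Borel set. Setting $k := k_{\alpha,\gamma}$ and $m_\alpha(E) := \int_E \delta(x)^\alpha dx$ for a Borel set $E \subset \Omega$, I will aim to prove the kernel estimate
\begin{equation*}
\int_E G_s(x,y)\,\delta(x)^\alpha\,dx \;\leq\; c\,\delta(y)^\gamma\, m_\alpha(E)^{\,1 - 1/k} \qquad \text{for all } y \in \Omega.
\end{equation*}
Once this is in hand, Fubini immediately gives
\begin{equation*}
\int_E \mathbb{G}_s[\lambda]\,\delta^\alpha\,dx
= \int_\Omega \left(\int_E G_s(x,y)\delta(x)^\alpha\,dx\right) d\lambda(y)
\leq c\, m_\alpha(E)^{\,1-1/k}\,\|\lambda\|_{\mathfrak{M}(\Omega,\delta^\gamma)},
\end{equation*}
which is precisely the defining property of the weak $L^k(\Omega, \delta^\alpha)$ quasi-norm.

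To establish the kernel estimate, I would first invoke Lemmas \ref{estGM0} and the bounds \eqref{2.33}-\eqref{2.34}, which together yield, for $\gamma \in [0,s]$,
\begin{equation*}
G_s(x,y) \;\leq\; c\,\delta(y)^\gamma \min\bigl\{ \,|x-y|^{\gamma - N},\ \delta(x)^{s}\,|x-y|^{\gamma - s - N}\,\bigr\},
\end{equation*}
obtained by interpolating between the two regimes $G_s(x,y) \lesssim |x-y|^{2s-N}$ and $G_s(x,y) \lesssim \delta(x)^s \delta(y)^s |x-y|^{-N}$ (and using $\delta(y) \leq |x-y| + \delta(x)$ where needed to shift powers of $\delta(y)$ out). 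Fix $y$ and a parameter $\rho>0$ to be chosen, and split $E = E_\rho^1 \cup E_\rho^2$ with $E_\rho^1 := E \cap B_\rho(y)$, $E_\rho^2 := E \setminus B_\rho(y)$. On $E_\rho^1$ I use the ``short-range'' form of the bound and estimate
\begin{equation*}
\int_{E_\rho^1} G_s(x,y)\delta(x)^\alpha dx
\leq c\,\delta(y)^\gamma \int_{B_\rho(y)} \delta(x)^\alpha |x-y|^{\gamma-N}\,dx,
\end{equation*}
which by polar coordinates and the trivial bound $\delta(x)^\alpha \leq (\delta(y)+|x-y|)^\alpha$ is controlled by $c\,\delta(y)^\gamma \rho^{\alpha \wedge \ldots}$ with an explicit power computed from $\alpha, \gamma, N, s$. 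On $E_\rho^2$ I use the complementary bound, turning $|x-y|^{-\kappa} \leq \rho^{-\kappa}$ out of the integral and leaving a factor $m_\alpha(E)$, giving a term of order $\rho^{-\kappa} m_\alpha(E)$.

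The final step is to optimize in $\rho$: balancing the two contributions produces an exponent on $m_\alpha(E)$ that is precisely $1 - 1/k_{\alpha,\gamma}$. The two branches in the definition \eqref{k} of $k_{\alpha,\gamma}$ correspond exactly to whether this optimization is achieved inside the range $\rho \leq \mathrm{diam}(\Omega)$ with the ``boundary'' weight $\delta^\alpha$ active, giving $\frac{N+\alpha}{N-2s+\gamma}$, or whether the Euclidean scaling dominates, giving $\frac{N}{N-2s}$. The main obstacle, and the step requiring the most care, is bookkeeping the powers of $\delta(x)$, $\delta(y)$, and $|x-y|$ in the two regimes simultaneously for all $\alpha,\gamma\in[0,s]$, and verifying that the crossover condition $\alpha < \frac{N\gamma}{N-2s}$ in \eqref{k} is exactly the threshold that separates the two optimizations; this is essentially a careful book-keeping exercise once the kernel bounds and the splitting are in place (and it is the content of \cite[Proposition 2.2]{CV} on which this lemma is based).
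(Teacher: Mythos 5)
You should note at the outset that the paper itself gives no proof of this lemma: it is quoted directly from \cite[Proposition 2.2]{CV}, so the only possible comparison is with the argument in that reference. Your general scheme is the standard one and the first reduction is sound: by Fubini it suffices to prove a kernel estimate of the form $\int_E G_s(x,y)\delta(x)^\alpha dx\le c\,\delta(y)^\gamma\, m_\alpha(E)^{1-1/k_{\alpha,\gamma}}$ uniformly in $y\in\Omega$ and Borel $E\subset\Omega$, and a near/far splitting around $y$ with an optimized radius is a legitimate way to attempt it (\cite{CV} organizes the same information through the level sets of $G_s(\cdot,y)$, which is equivalent in spirit).

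The genuine gap is in the displayed interpolated bound, and it is not cosmetic. Interpolating $G_s(x,y)\lesssim |x-y|^{2s-N}$ with \eqref{2.33}, resp.\ \eqref{G0} with \eqref{2.33} (roles of $x,y$ exchanged), gives $G_s(x,y)\lesssim \delta(y)^\gamma |x-y|^{2s-\gamma-N}$ and $G_s(x,y)\lesssim \delta(x)^s\delta(y)^\gamma |x-y|^{s-\gamma-N}$, whereas you wrote the exponents $\gamma-N$ and $\gamma-s-N$; your inequalities are true on the bounded domain only because they discard a factor $|x-y|^{2(s-\gamma)}\le \mathrm{diam}(\Omega)^{2(s-\gamma)}$, i.e.\ they are strictly lossy. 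With your exponents the argument breaks: for $\gamma=0$ the near-field bound $|x-y|^{-N}$ is not locally integrable, so the estimate on $E\cap B_\rho(y)$ collapses; and for $\gamma>0$ the balancing of $\rho^{\gamma+\alpha}$ against the far-field term yields at best $m_\alpha(E)^{\frac{\gamma+\alpha}{N+\alpha}}$, whose exponent falls short of $1-1/k_{\alpha,\gamma}=\min\bigl\{\tfrac{2s-\gamma+\alpha}{N+\alpha},\tfrac{2s}{N}\bigr\}$ by exactly the lost amount $\tfrac{2(s-\gamma)}{N+\alpha}$ whenever $\gamma<s$; since $m_\alpha(E)$ may be arbitrarily small, this weaker power does not imply \eqref{estG1}. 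With the corrected interpolations the balancing does produce $\tfrac{2s-\gamma+\alpha}{N+\alpha}$, but the part you dismiss as bookkeeping is still missing: the near-field integral also generates the term $\delta(y)^\alpha\rho^{2s-\gamma}$ (from $\delta(x)^\alpha\le(\delta(y)+|x-y|)^\alpha$), which is not dominated by $\rho^{2s-\gamma+\alpha}$ when $\rho<\delta(y)$, and it is precisely the competition between this boundary term (handled with the bound retaining $\delta(x)^s$) and the interior singularity $|x-y|^{2s-N}$ that produces the second branch $\tfrac{N}{N-2s}$ of \eqref{k} and the threshold $\alpha$ versus $\tfrac{N\gamma}{N-2s}$. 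Deferring that case analysis to \cite{CV} defers the actual content of the lemma, so as written the proposal is an outline whose key displayed estimate is too weak to deliver the stated exponent.
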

	
	\begin{lemma} (\cite[Proposition 1.4]{RS2}) \label{regularity2} (i) If $t>\frac{N}{2s}$ then there exists $c=c(N,s,t,\Gw)$ such that
		\begin{equation} \label{estG2} 
		\| \BBG_s[\la] \|_{C^{\min\{ s, 2s-\frac{N}{t}  \}   }(\Gw)} \leq c\| \la \|_{L^t(\Gw)} \quad \forall\, \la \in L^t(\Gw).	
		\end{equation}
		(ii) If $1<t<\frac{N}{2s}$ then there exists a constant $c=c(N,s,t)$ such that
		\begin{equation} \label{estG2} 
		\| \BBG_s[\la] \|_{L^{\frac{Nt}{N-2ts} }(\Gw)} \leq c\| \la \|_{L^t(\Gw)} \quad \forall\, \la \in L^t(\Gw).
		\end{equation}
	\end{lemma}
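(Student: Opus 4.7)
The plan is to treat the two parts separately, in both cases reducing to classical Riesz-potential estimates combined with the Green kernel bounds assembled in this section.

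For part (ii), Lemma \ref{estGM0} gives $G_s(x,y) \leq c|x-y|^{2s-N}$, so $|\BBG_s[\la](x)|$ is dominated by a constant multiple of the Riesz potential $I_{2s}(|\la|\mathbf 1_\Gw)(x) = \int_\Gw |x-y|^{2s-N}|\la(y)|\,dy$. Since $1<t<N/(2s)$, the Hardy--Littlewood--Sobolev inequality yields
\begin{equation*}
\|I_{2s}(|\la|\mathbf 1_\Gw)\|_{L^{Nt/(N-2ts)}(\R^N)} \leq c\|\la\|_{L^t(\Gw)},
\end{equation*}
which is the desired estimate upon restriction to $\Gw$.

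For part (i), the exponent $\min\{s,2s-N/t\}$ combines two distinct regularity scales: an interior H\"older exponent $2s-N/t$ inherited from the Riesz potential, and a boundary H\"older exponent $s$ coming from the boundary behaviour of the Dirichlet fractional Laplacian. The interior estimate follows by applying the classical mapping $I_{2s}\colon L^t \to C^{0,2s-N/t}_{\mathrm{loc}}$ (valid for $t>N/(2s)$) to the Riesz majorant from part (ii), which controls the oscillation of $\BBG_s[\la]$ on compact subsets of $\Gw$. For the boundary estimate, one exploits the sharper kernel bound \eqref{2.33}, namely $G_s(x,y) \leq c\gd(y)^s|x-y|^{s-N}$: for $x$ near $\pa\Gw$ one uses H\"older's inequality with a splitting of the integration domain into a ball centred at $x$ of radius comparable to $\gd(x)$ and its complement, to produce a factor $\gd(x)^s \|\la\|_{L^t}$, which together with $\BBG_s[\la] \equiv 0$ on $\pa\Gw$ yields a H\"older modulus of order $s$ up to $\pa\Gw$. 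A standard finite covering of $\bar\Gw$ then glues the interior and boundary moduli into a uniform H\"older estimate of exponent $\min\{s,2s-N/t\}$.

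The main obstacle will be the boundary regularity; the interior part is a direct application of classical Riesz potential theory, but controlling the modulus of continuity of $\BBG_s[\la]$ uniformly up to $\pa\Gw$ requires the asymmetric kernel bound \eqref{2.33} rather than the coarser symmetric one, together with a careful partition of the integration domain to take advantage of the $\gd(y)^s$ weight. The $C^2$-regularity of $\pa\Gw$ enters implicitly through these kernel bounds, and the finite-covering patching of interior and boundary H\"older moduli completes the argument.
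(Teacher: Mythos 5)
The paper does not prove this lemma at all: it is quoted verbatim from \cite[Proposition 1.4]{RS2}, so your reconstruction can only be compared with the argument in the literature. Your part (ii) is correct and is exactly the standard proof: the bound $G_s(x,y)\leq c|x-y|^{2s-N}$ from \eqref{G0} reduces the statement to the Hardy--Littlewood--Sobolev mapping property of the Riesz potential $I_{2s}$.

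Part (i), however, has two genuine gaps. First, the interior step is a non sequitur as written: from the pointwise majorization $|\BBG_s[\la]|\leq c\,I_{2s}(|\la|\chi_\Gw)$ you cannot deduce anything about the \emph{oscillation} of $\BBG_s[\la]$, since a pointwise bound by a H\"older function does not control a H\"older seminorm. To use Riesz potential theory you must decompose $\BBG_s[\la]=I_{2s}(\la\chi_\Gw)-w$, where $w$ is $s$-harmonic in $\Gw$ with bounded exterior data; but then the interior estimates for $w$ on balls $B_{\gd(x)/2}(x)$ degenerate like $\gd(x)^{-\beta}$ as $x\to\pa\Gw$, so the concluding ``finite covering'' cannot simply glue interior and boundary moduli. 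What the actual proof needs is the boundary decay $|\BBG_s[\la](x)|\leq C\gd(x)^{\beta}\|\la\|_{L^t}$ with $\beta=\min\{s,2s-N/t\}$ together with \emph{uniform} $C^\beta$ seminorm bounds on balls proportional to $\gd(x)$, combined through the scaling lemma of \cite{RS1}; this bookkeeping is precisely the delicate part and is absent from your sketch. Second, the boundary computation is off in detail: \eqref{2.33} carries the weight $\gd(y)^s$, not $\gd(x)^s$, so you must first invoke the symmetry $G_s(x,y)=G_s(y,x)$ (or use \eqref{G0}); and the H\"older splitting you describe produces the factor $\gd(x)^s$ only when $t>N/s$, because for $N/(2s)<t<N/s$ the outer integral $\int_{\{|x-y|>\gd(x)\}}|x-y|^{(s-N)t'}dy$ blows up as $\gd(x)\to 0$ and the computation yields $\gd(x)^{2s-N/t}$ instead. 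That exponent is still consistent with $\min\{s,2s-N/t\}$, so the slip is repairable, but as stated your boundary claim is valid only in one regime. In short, the skeleton (Riesz interior regularity plus weighted boundary decay plus a gluing of scales) matches the proof in \cite{RS2,RS1}, but both central analytic steps need to be carried out correctly rather than by majorization and covering.
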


We recall the $3$G-estimates.

\begin{lemma} (\cite[Theorem 1.6]{ChSo1}) \label{3g}
There exists a positive constant $C=C(\Gw,s)$ such that
\begin{equation} \label{e3g}
\frac{G_s(x,y)G_s(y,z)}{G_s(x,z)}\leq C \frac{|x-z|^{N-2s}}{|x-y|^{N-2s}|y-z|^{N-2s}} ,\quad\forall\, (x,y,z)\in\Gw\times\Gw\times\Gw.
\end{equation}
\end{lemma}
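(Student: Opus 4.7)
The plan is to derive \eqref{e3g} from the two-sided Green-kernel estimate of Lemma \ref{estGM0}, which I find convenient to rewrite in the compact form
\[
G_s(\xi,\eta)\asymp|\xi-\eta|^{-N}\min\{|\xi-\eta|^{2s},\gd(\xi)^s\gd(\eta)^s\},\qquad \xi,\eta\in\Gw,\;\xi\neq\eta.
\]
Setting $a=|x-y|$, $b=|y-z|$, $c=|x-z|$ and $d_x=\gd(x)$, $d_y=\gd(y)$, $d_z=\gd(z)$, I substitute these bounds for each of $G_s(x,y)$, $G_s(y,z)$ and $G_s(x,z)$ in \eqref{e3g} and cancel the common factor $c^N/(a^N b^N)$ that then appears on both sides. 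The inequality \eqref{e3g} is thereby reduced to the purely scalar statement
\begin{equation}\label{scalar3G}
c^{2s}\min\{a^{2s},d_x^s d_y^s\}\min\{b^{2s},d_y^s d_z^s\}\leq C\,a^{2s}b^{2s}\min\{c^{2s},d_x^s d_z^s\}.
\end{equation}

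I would then distinguish two cases according to which argument realizes the right-hand minimum in \eqref{scalar3G}. If $d_x^s d_z^s\geq c^{2s}$, then the right-hand side of \eqref{scalar3G} equals $C a^{2s}b^{2s}c^{2s}$, while the trivial estimate $\min\{A,B\}\leq A$ applied to each factor on the left gives at most $c^{2s}a^{2s}b^{2s}$; this case is immediate. In the opposite case $d_x^s d_z^s<c^{2s}$, the target \eqref{scalar3G} becomes
\[
c^{2s}\min\{a^{2s},d_x^s d_y^s\}\min\{b^{2s},d_y^s d_z^s\}\leq C\,a^{2s}b^{2s}d_x^s d_z^s.
\]
By the $x\leftrightarrow z$ symmetry of \eqref{e3g} I may assume $a\leq b$, and I would then exploit the triangle inequalities $|d_y-d_x|\leq a$, $|d_y-d_z|\leq b$ and $|a-c|\leq b\leq a+c$. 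Splitting further on whether $a\leq c/2$ (so that $c/2\leq b\leq 3c/2$) or $a>c/2$ (so that $a\asymp b\asymp c$ with constants depending only on $s$), I would select the argument of each $\min$ on the left that avoids retaining a factor of $d_y^{2s}$, and use $d_y\leq d_x+a$ or $d_y\leq d_z+b$ to reabsorb any residual $d_y$ into the remaining quantities. After these substitutions the inequality collapses into a handful of elementary comparisons among $a,b,c,d_x,d_z$ that can be verified term by term.

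The main obstacle is precisely the last step: when $y$ sits comfortably away from $\partial\Gw$ while one of $x,z$ is close to it, neither of the two trivial bounds $\min\{a^{2s},d_x^s d_y^s\}\leq a^{2s}$ and $\min\{a^{2s},d_x^s d_y^s\}\leq d_x^s d_y^s$ is efficient on its own, so the choice of argument must be made factor by factor, with the residual $d_y$'s removed using the boundary-distance triangle bounds above. Since \eqref{e3g} is proved in exactly this form in \cite[Theorem~1.6]{ChSo1} for the class of $C^{1,1}$ domains that includes the present $C^{2}$ setting, in practice I would simply invoke their result as a black box and record \eqref{scalar3G} only as motivation for the scaling appearing on the right-hand side of \eqref{e3g}.
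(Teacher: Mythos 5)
The paper offers no proof of this lemma at all—it is imported verbatim from \cite[Theorem 1.6]{ChSo1}—and your final step of invoking that theorem as a black box is exactly what the paper does, so your proposal is correct and matches the paper's approach. Your preliminary reduction of \eqref{e3g} to the scalar inequality via the two-sided bounds of Lemma \ref{estGM0} is also sound (and the case analysis you sketch can indeed be closed using $|\gd(x)-\gd(y)|\leq |x-y|$ together with the triangle inequality among $|x-y|,|y-z|,|x-z|$), but it is not needed once the citation is accepted.
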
	

Next we will prove some important estimates concerning the Green operator which will be used in Section 3.

\begin{lemma} \label{ingg} Let $0< p<N_s$ and $\la \in\GTM^+(\Gw,\gd^{s  })$ such that $\| \la \|_{\GTM(\Gw,\gd^s)}=1$. Then there exists a constant $C=C(N,s,p,\Gw)>0$ such that 
	\begin{equation} \label{laG} 
	\BBG_s[\BBG_s[\la]^p] \leq C\BBG_s[\la] \quad \text{a.e. in } \Omega. 
	\end{equation}
\end{lemma}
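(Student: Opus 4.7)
The plan is to split the proof according to the value of $p$, using the 3G-inequality (Lemma \ref{3g}) in the pivotal case $p=1$, Jensen's inequality for $p>1$, and H\"older's inequality for $p<1$. The normalization $\|\gl\|_{\GTM(\Gw,\gd^s)}=1$ enters in two distinct ways: it turns $\gd^s\, d\gl$ into a probability measure (for Jensen), and it triggers Remark \ref{l:22-6-1}, which furnishes the pointwise lower bound $\gd^s \leq c\,\BBG_s[\gl]$.

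\emph{Case $p=1$.} I would apply 3G to obtain
\[
\int_\Gw G_s(x,z) G_s(z,y)\, dz \leq C\, G_s(x,y)\, |x-y|^{N-2s} \int_\Gw \frac{dz}{|x-z|^{N-2s}|z-y|^{N-2s}},
\]
then split the $z$-integral over $B_{r/2}(x)$, $B_{r/2}(y)$ and the complement (with $r=|x-y|$). Each piece is bounded by $C(r^{4s-N}+r^{2s-N})$, and the prefactor $|x-y|^{N-2s}$ absorbs the singular powers of $r$, leaving a bound that is uniform on the bounded domain $\Gw$. Fubini then yields $\BBG_s[\BBG_s[\gl]] \leq C\,\BBG_s[\gl]$.

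\emph{Case $1<p<N_s$.} Jensen's inequality applied to $t\mapsto t^p$ with the probability measure $\gd^s\,d\gl$ will give
\[
\BBG_s[\gl](z)^p \leq \int_\Gw \frac{G_s(z,y)^p}{\gd(y)^{(p-1)s}}\, d\gl(y),
\]
so by Fubini the task reduces to the pointwise inequality
\[
\int_\Gw G_s(x,z)\, G_s(z,y)^p\, dz \leq C\, \gd(y)^{(p-1)s} G_s(x,y).
\]
To establish it I would bound $G_s(z,y)^{p-1}$ by $C\gd(y)^{(p-1)s}|z-y|^{(s-N)(p-1)}$ using \eqref{2.33}, and then apply 3G to the residual factor $G_s(x,z) G_s(z,y)$. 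The problem collapses to controlling $|x-y|^{N-2s}\int_\Gw |x-z|^{2s-N}|z-y|^{s(p+1)-Np}\, dz$ through a three-region split (near $x$, near $y$, and far from both). \textbf{This is the main obstacle}: the resulting bound carries a factor $|x-y|^{N+s-p(N-s)}$, and the exponent $N+s-p(N-s)$ is non-negative exactly when $p\leq N_s$. Without the hypothesis $p<N_s$, this factor blows up as $|x-y|\to 0$, so the condition is essentially sharp for the argument.

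\emph{Case $0<p<1$.} I would reduce to the $p=1$ case via H\"older's inequality with conjugate exponents $\frac{1}{1-p}$ and $\frac{1}{p}$, which gives
\[
\int_\Gw G_s(x,z)\, \BBG_s[\gl](z)^p\, dz \leq \left(\int_\Gw G_s(x,z)\, dz\right)^{1-p}\left(\int_\Gw G_s(x,z)\, \BBG_s[\gl](z)\, dz\right)^p.
\]
The first factor is $\BBG_s[1](x)^{1-p}\leq C\,\gd(x)^{s(1-p)}$ by \cite[(2.18)]{CV}, and the second is $\leq (C\,\BBG_s[\gl](x))^p$ by the case $p=1$ already proved. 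Finally, Remark \ref{l:22-6-1} (with $\theta=1$) absorbs $\gd(x)^{s(1-p)}$ into $\BBG_s[\gl](x)^{1-p}$, producing the desired bound.
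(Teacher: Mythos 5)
Your proposal is correct and takes essentially the same route as the paper: for $p\geq 1$ the paper likewise applies Jensen/H\"older with the probability measure $\gd^s\,d\la$, bounds $G_s(y,z)^{p-1}$ via \eqref{2.33}, invokes the 3G inequality, and splits the intermediate integral, with $p<N_s$ providing exactly the integrability you identify. The only (cosmetic) difference is in the case $0<p<1$, where the paper uses the pointwise bound $t^p\leq C(1+t)$ together with $\BBG_s[1]\leq C\,\BBG_s[\la]$ (Remark \ref{G1}) instead of your H\"older interpolation, but both variants rest on the same ingredients, namely the $p=1$ case and the lower bound $c\,\gd^s\leq \BBG_s[\la]$ from Remark \ref{l:22-6-1}.
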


\begin{proof} First, we consider the case $p > 1$. From Lemma \ref{estGM} and the embedding $M^{N_s}(\Omega,\gd^s) \subset L^p(\Omega,\gd^s)$, we deduce that  $\BBG_s[\la]  \in L^{p}(\Gw,\gd^s)$. We write
	$$\BBG_s[\la](y)=\int_\Gw G_s(y,z)d\la(z)=\int_\Gw \frac{G_s(y,z)}{\gd(z)^{s  }}\gd(z)^{s   }d\la(z).$$
	Therefore,  using H\"{o}lder inequality, we obtain 
	$$\BBG_s[\la](y)^p\leq \int_\Gw\left(\frac{G_s(y,z)}{\gd(z)^{s   }}\right)^p \gd(z)^{s   } d\la(z),$$
	as $\| \la \|_{\GTM(\Gw,\gd^s)}=1$. 
	Consequently,
	\begin{equation} \label{in11}
	\BBG_s[\BBG_s[\la]^p](x)\leq \int_{\Gw}\int_{\Gw} G_s(x,y)G_s(y,z)^p\gd(z)^{s   (1-p)}d\la(z)dy.
	\end{equation}
	Now applying Lemma \ref{3g} and \eqref{2.33} to the right-hand side of the above expression, we obtain
	\begin{equation} \begin{aligned}
	&\int_{\Gw}\int_{\Gw} G_s(x,y)G_s(y,z)^p\gd(z)^{s (1-p)}d\la(z)dy\\ 
	&=\int_{\Gw}\int_{\Gw} G_s(x,y)G_s(y,z)\Big(\frac{G_s(y,z)}{\gd(z)^s}\Big)^{p-1}d\la(z)dy \\
	&\leq C_1\int_{\Gw}G_s(x,z)\int_{\Gw} \frac{|x-z|^{N-2s}}{|x-y|^{N-2s}|y-z|^{N-2s}}|y-z|^{-(N-s)(p-1)} dyd\la(z)\\
	&\leq C_2\int_{\Gw}G_s(x,z)\int_{\Gw}\big[|y-z|^{2s-N-(N-s)(p-1)}+|x-y|^{-(N-2s)}|y-z|^{-(N-s)(p-1)}\big]dyd\la(z)\\
	&\leq  C_3\int_{\Gw}G_s(x,z)\Big[\int_{\Omega \cap \{ |x-y| \geq |y-z| \}   }|y-z|^{2s-N-(N-s)(p-1)}dy\\
	&\qquad \qquad \qquad \quad + \int_{\Omega \cap \{ |x-y| \leq |y-z| \}   }|x-y|^{2s-N-(N-s)(p-1)}) dy \Big]d\la(z) \label{in12} \\
	&\leq C_4\int_{\Gw}G_s(x,z)d\la(z),
	\end{aligned} \end{equation}
	where $C_i=C_i(N,s,p,\Omega)$ ($i=1,2,3,4$). Here in the second estimate, we have used the inequality $|x-z| \leq |x-y| + |y-z|$ and in the last estimate we have used the fact that $p<N_s$.  Hence combining \eqref{in11} and \eqref{in12},  we derive that
	\begin{align*}
	\BBG_s[\BBG_s[\la]^p](x)
	&\leq C\int_{\Gw}G_s(x,z)d\la(z)=C\BBG_s[\la](x)
	\end{align*}
	where $C=C(N,s,p,\Omega)$. Note that the above argument is still valid for the case $p=1$.
	
	Next we consider the case $0< p<1$. Then we have
	$$ \BBG_s[\la]^p \leq C(p)(1+ \BBG_s[\la]) \quad \text{a.e. in } \Omega.
	$$
	This yields
	$$ \BBG_s[\BBG_s[\la]^p] \leq C(p)(\BBG_s[1] + \BBG_s[\BBG_s[\la]]) \quad \text{a.e. in } \Omega.
	$$
	By applying the case $p=1$, we have $ \BBG_s[\BBG_s[\la]] \leq C \BBG_s[\la]$ with $C=C(N,p,s,\Omega)$. Therefore, combining the above results along with Remark \ref{G1} with $\theta=1$, we derive \eqref{laG}. 
\end{proof}						


\begin{lemma} \label{inggs}
	Let $0< p<N_s$, $\la\in\GTM^+(\Gw,\gd^{s})$ with $\|\la\|_{\GTM(\Gw,\gd^{s})}=1$. Let  $\theta$ be such that
	\begin{equation} \label{s} \max\left(0,p-N_s+1\right)<\gth \leq 1. 
	\end{equation}
	Then there exists a positive constant $C=C(N,s,p,\theta,\Omega)$ such that
	\begin{equation} \label{ests} \BBG_s[\BBG_s[\la]^p ]\leq C \BBG_s[\la]^\gth \quad\text{ a.e. in } \;\Gw.
	\end{equation}
\end{lemma}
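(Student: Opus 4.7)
The plan is to adapt the proof of Lemma~\ref{ingg} via a refined interpolation of $G_s(x,z)$ that introduces the factor $G_s(x,z)^\theta$ directly, and then to close the estimate via Jensen's inequality. I first handle $p\geq 1$: applying Hölder's inequality with the probability measure $\gd(z)^s\,d\la(z)$ (available since $\|\la\|_{\GTM(\Gw,\gd^s)}=1$) as in Lemma~\ref{ingg} gives
\[
\BBG_s[\BBG_s[\la]^p](x)\leq \int_\Gw\!\int_\Gw G_s(x,y)\,G_s(y,z)^p\,\gd(z)^{s(1-p)}\,d\la(z)\,dy,
\]
after which the 3G-inequality (Lemma~\ref{3g}) applied to $G_s(x,y)G_s(y,z)$, together with \eqref{2.33} applied to $G_s(y,z)^{p-1}$, reduces the integrand to
\[
G_s(x,y)G_s(y,z)^p\gd(z)^{s(1-p)}\leq C\,G_s(x,z)\,\frac{|x-z|^{N-2s}}{|x-y|^{N-2s}\,|y-z|^{N-2s+(N-s)(p-1)}}.
\]

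The new input is to split $G_s(x,z)=G_s(x,z)^\theta\,G_s(x,z)^{1-\theta}$ and use \eqref{2.33} to bound $G_s(x,z)^{1-\theta}\leq C\,\gd(z)^{s(1-\theta)}|x-z|^{(s-N)(1-\theta)}$. Combining with the $|x-z|^{N-2s}$ factor produces the exponent $(N-s)\theta-s$ on $|x-z|$. The principal technical task is then to show
\[
\int_\Gw\frac{|x-z|^{(N-s)\theta-s}}{|x-y|^{N-2s}\,|y-z|^{N-2s+(N-s)(p-1)}}\,dy\leq C
\]
uniformly in $x,z\in\Gw$. When $(N-s)\theta\geq s$, apply $|x-z|^{(N-s)\theta-s}\leq C(|x-y|^{(N-s)\theta-s}+|y-z|^{(N-s)\theta-s})$ and reduce to two convolution-type integrals of the form $\int dy/(|x-y|^a|y-z|^b)$; an algebraic computation (using $N_s-1=2s/(N-s)$) shows that both exponent-sums satisfy $a+b<N$ precisely when $\theta>p-N_s+1$. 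When $(N-s)\theta<s$, pull the singular factor $|x-z|^{(N-s)\theta-s}$ outside the $y$-integration and bound the remaining integral via the convolution estimate $\leq C|x-z|^{N-a-b}$; the resulting combined power of $|x-z|$ simplifies to $2s-(N-s)(p-\theta)$, nonnegative under the same hypothesis.

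Once the $y$-integration is controlled by a constant, one arrives at
\[
\BBG_s[\BBG_s[\la]^p](x)\leq C\int_\Gw G_s(x,z)^\theta\,\gd(z)^{s(1-\theta)}\,d\la(z),
\]
and Jensen's inequality applied to the concave function $t\mapsto t^\theta$ on the probability measure $\gd(z)^s\,d\la(z)$ yields
\[
\BBG_s[\la](x)^\theta=\bigg(\int_\Gw\frac{G_s(x,z)}{\gd(z)^s}\,\gd(z)^s\,d\la(z)\bigg)^\theta\geq\int_\Gw G_s(x,z)^\theta\,\gd(z)^{s(1-\theta)}\,d\la(z),
\]
closing the estimate. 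For $0<p<1$, reduce to the $p=1$ case via $\BBG_s[\la]^p\leq C(1+\BBG_s[\la])$ together with Remark~\ref{G1}. The main obstacle is the case analysis on the sign of $(N-s)\theta-s$ together with the algebraic verification that the sharp threshold $\theta>p-N_s+1$ is exactly what makes each subcase's inner $y$-integral converge.
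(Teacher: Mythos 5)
Your reduction for $p\ge 1$, the case $(N-s)\gth\ge s$, and the closing Jensen step (which is the paper's H\"older step in disguise) are all fine, but the case $(N-s)\gth<s$ contains a genuine gap. There you invoke the convolution estimate $\int_\Gw |x-y|^{-a}|y-z|^{-b}\,dy\le C|x-z|^{N-a-b}$ with $a=N-2s$ and $b=N-2s+(N-s)(p-1)$; this estimate is valid only when $a+b>N$, i.e. when $(N-s)p>3s$. However, the mere existence of an admissible $\gth$ with $(N-s)\gth<s$ forces $p-N_s+1<\frac{s}{N-s}$, i.e. $(N-s)p<3s$, so in this case one always has $a+b<N$ and the inequality you use is false. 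Worse, the intermediate bound you aim for fails outright: for $a+b<N$ the $y$-integral stays bounded away from zero as $x\to z$, while your prefactor $|x-z|^{(N-s)\gth-s}$ blows up, so $\int_\Gw |x-z|^{(N-s)\gth-s}|x-y|^{-a}|y-z|^{-b}\,dy$ is not uniformly bounded. The source of the loss is that you estimate $G_s(x,z)^{1-\gth}$ by the boundary bound \eqref{2.33}, which is much too singular near the diagonal, where $G_s(x,z)\simeq |x-z|^{2s-N}$; this is exactly what creates the negative power of $|x-z|$ that cannot be absorbed.

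The paper's proof avoids this by interpolating \emph{before} invoking 3G rather than after: it applies the 3G inequality raised to the power $\gth$ to the product $G_s(x,y)^\gth G_s(y,z)^\gth$, so the only power of $|x-z|$ that ever appears is $(N-2s)\gth\ge 0$, and the triangle-inequality splitting $|x-z|^{(N-2s)\gth}\le C\big(|x-y|^{(N-2s)\gth}+|y-z|^{(N-2s)\gth}\big)$ works for every admissible $\gth$. The leftover factors $G_s(x,y)^{1-\gth}$ and $\big(G_s(y,z)/\gd(z)^s\big)^{p-\gth}$ are then bounded by $|x-y|^{(2s-N)(1-\gth)}$ and $|y-z|^{(s-N)(p-\gth)}$, and the resulting two integrals converge precisely under $\gth>p-N_s+1$. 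If you reorganize your splitting of $G_s(x,z)$ in this way, your argument closes in all cases; as written, the second case cannot be repaired within your reduction, because the pointwise bound you reduce it to is false there.
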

\begin{proof} First we assume that $p>1$. In view of the proof of Lemma \ref{ingg}, we have
	\begin{equation} \label{33a} \begin{aligned}
	&\BBG_s[ \BBG_s[\la]^p](x)\leq C\int_{\Gw}\int_{\Gw} G_s(x,y)G_s(y,z)^p\gd(z)^{s   (1-p)}d\la(z)dy\\ 
	&=C\int_{\Gw}\int_{\Gw}G_s(x,y)^{1-\gth} G_s(x,y)^\gth G_s(y,z)^\gth\left(\frac{G_s(y,z)}{\gd(z)^s}\right)^{p-\gth}\gd(z)^{s  (1-\gth)}d\la(z)dy.
	\end{aligned} \end{equation}
	By \eqref{e3g} and the inequality $|x-z| \leq |x-y|+|y-z|$,  we have
	\begin{equation} \label{knm} G_s(x,y)^\gth G_s(y,z)^\gth \leq CG_s(x,z)^\gth (|x-y|^{(2s-N)\gth} + |y-z|^{(2s-N)\gth}).
	\end{equation}
	Combining \eqref{33a} and \eqref{knm} yields
	\begin{equation} \BBG_s[\BBG_s[\la]^p](x) \leq C\int_{\Gw}G_s(x,z)^\gth \gd(z)^{s(1-\gth)}\int_{\Gw} I_{x,z}(y) dy d\la(z)
	\end{equation} 
	where
	$$ I_{x,z}(y) = \int_{\Gw}G_s(x,y)^{1-\gth}\left(\frac{G_s(y,z)}{\gd(z)^s}\right)^{p-\gth}(|x-y|^{(2s-N)\gth} + |y-z|^{(2s-N)\gth})dy.
	$$
	Applying \eqref{G0} and \eqref{2.33}, we obtain
	\begin{equation} \begin{aligned}
	I_{x,z}(y)& \leq C \int_{\Gw}|x-y|^{(2s-N)(1-\gth)}|y-z|^{(s-N)(p-\gth)}(|x-y|^{(2s-N)\gth} + |y-z|^{(2s-N)\gth})dy\\ 
	&\leq C  \int_{  \{y \in \Gw: |x-y| \leq |y-z|  \}   } |x-y|^{2s-N+(s-N)(p-\gth)}dy \\
	&+ C \int_{  \{y \in \Gw: |x-y| \geq |y-z|  \}   } |y-z|^{2s-N+(s-N)(p-\gth)}dy 
	\\
	&\leq C.
	\end{aligned} \end{equation}
	Here in the last inequlaity we have used the fact that $\theta> p-N_s+1$. Thus
	\begin{equation} \begin{aligned}
	\BBG_s[\BBG_s[\la]^p](x) &\leq C\int_{\Gw}\left(\frac{G_s(x,z)}{\gd(z)^{s   }}\right)^\gth \gd(z)^{s   }d\la(z)\\
	&\leq C\left(\int_{\Gw}G_s(x,z)d\la(z)\right)^\gth,
	\end{aligned} \end{equation}
	where in the last line we have used H\"{o}lder inequality with exponent $\frac{1}{\theta}$.	
	Note that the above approach is still valid for the case $p=1$.
	
	If $0< p<1$ then
	$$\BBG_s[ \BBG_s[\la]^p]\leq C(\BBG_s[1]+\BBG_s[\BBG_s[\la]])\leq C(\BBG_s[1]+\BBG_s[\la]^\gth).$$
	Then \eqref{ests} follows by a similar argument as in the proof of Lemma \ref{ingg} by using Remark \ref{G1} with $\theta \leq 1$. 
\end{proof}

In the sequel, without loss of generality, we may assume that 
\begin{equation} \label{p<q} 0<p \leq q.
\end{equation}
Hence, if $pq \geq 1$ then 
\begin{equation}\label{pqrel} p \leq q\frac{p+1}{q+1} \leq p\frac{q+1}{p+1} \leq q. \end{equation}
Put
$$t_s: =q\left(p- N_s + 1\right). $$
Notice that if $q\frac{p+1}{q+1} <N_s$ then $t_s <q\frac{p +1}{q+1}<N_s$.
\begin{lemma} \label{G3}
	Let $p,q>0$, $p\leq q$ and $\la\in\GTM^+(\Gw,\gd^{s})$ with $\|\la\|_{\GTM(\Gw,\gd^{s})}=1$.  Assume  $q\frac{p +1}{q+1} <N_s$. Then for any $t \in (\max(0,t_s),q]$, there exists a positive constant $c=c(N,p,q,s,t)$  such that
	\begin{equation} \label{G31} \BBG_s[\BBG_s[\la]^p]^{q} \leq c \BBG_s[\la]^t \quad \text{a.e. in } \Omega.
	\end{equation}
	In particular,
	\begin{equation} \label{G32} \BBG_s[ \BBG_s[\la]^p]^{q} \leq C \BBG_s[\la]^{q\frac{p +1}{q+1}} \quad \text{a.e. in } \Omega, 
	\end{equation}
	\begin{equation} \label{G33} \BBG_s[ \BBG_s[\BBG_s[\la]^p]^{q}] \leq C\BBG_s[\la] \quad \text{a.e. in } \Omega,
	\end{equation}
	where $C=C(N,p,q,s)$.
\end{lemma}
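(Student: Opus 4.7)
The plan is to derive all three inequalities by a short chain of applications of Lemmas \ref{ingg} and \ref{inggs}, with the bulk of the work being the bookkeeping check that the hypothesis $q\frac{p+1}{q+1} < N_s$ together with $p \leq q$ yields all the exponent ranges needed.

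First I would prove \eqref{G31}. Since $p \leq q\frac{p+1}{q+1} < N_s$, the exponent $p$ is in the admissible range of Lemma \ref{inggs}. Given $t \in (\max(0,t_s), q]$, I set $\theta := t/q \in (0,1]$. The upper bound $\theta \leq 1$ is immediate from $t \leq q$, and the lower bound
\[
\theta = \frac{t}{q} > \max\!\left(0,\, \frac{t_s}{q}\right) = \max(0,\, p - N_s + 1)
\]
puts $\theta$ in the range \eqref{s} required by Lemma \ref{inggs}. That lemma then gives $\BBG_s[\BBG_s[\la]^p] \leq C\, \BBG_s[\la]^{\theta}$ a.e.\ in $\Omega$, and raising both sides to the $q$-th power produces \eqref{G31} with constant $C^q$.

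For \eqref{G32} I would specialize \eqref{G31} to $t = q\frac{p+1}{q+1}$. The condition $t \leq q$ amounts to $p \leq q$, which holds by assumption \eqref{p<q}. The condition $t > t_s$ rearranges to $q(p+1) < N_s(q+1)$, i.e.\ $q\frac{p+1}{q+1} < N_s$, which is precisely the running hypothesis; the degenerate case $t_s \leq 0$ is trivial since $t > 0$. Finally, for \eqref{G33}, I would apply $\BBG_s$ to both sides of \eqref{G32} and then invoke Lemma \ref{ingg} with the exponent $p' := q\frac{p+1}{q+1}$. Since $0 < p' < N_s$ by hypothesis, Lemma \ref{ingg} yields $\BBG_s[\BBG_s[\la]^{p'}] \leq C\, \BBG_s[\la]$ a.e.\ in $\Omega$, and combining these bounds delivers \eqref{G33}.

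The only nontrivial step is the verification that the assumption $q\frac{p+1}{q+1} < N_s$ is exactly what makes the target exponent $t = q\frac{p+1}{q+1}$ lie in the admissible window $(\max(0,t_s), q]$; once that numerical identity is spotted, everything else is a mechanical raising of powers and one further application of Lemma \ref{ingg}. I do not expect any serious analytic obstacle beyond this bookkeeping.
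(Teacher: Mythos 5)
Your proposal is correct and follows essentially the same route as the paper: apply Lemma \ref{inggs} with $\theta=t/q$ (after noting $p\leq q$ and $q\frac{p+1}{q+1}<N_s$ give $p<N_s$), raise to the $q$-th power for \eqref{G31}, specialize $t=q\frac{p+1}{q+1}$ for \eqref{G32}, and then apply $\BBG_s$ together with Lemma \ref{ingg} at exponent $q\frac{p+1}{q+1}<N_s$ for \eqref{G33}. All the exponent verifications you carry out are exactly the ones the paper performs.
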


\begin{proof} Since $p\leq q$ from \eqref{p<q}, it follows that $p\leq q\frac{p+1}{q+1}$. Therefore, from the assumption, it follows $p<N_s$. Hence $\max(0,p-N_s+1) < 1$. Let $t \in (\max(0,t_s),q]$ then $\max(0,p-N_s+1) <\frac{t}{q} \leq 1$. Therefore, applying Lemma \ref{inggs} with $\gth$ replaced by $\frac{t}{q}$, we obtain
	$$ \BBG_s[\BBG_s[\la]^p] \leq c \BBG_s[\la]^{\frac{t}{q}},
	$$
	which implies \eqref{G31}. 
	
	Since $t_s<q\frac{p+1}{q+1}  \leq q$, taking $t=q \frac{p+1}{q+1}$ in \eqref{G31} yields \eqref{G32}. Next, since $q\frac{p+1}{q+1} < N_s$, combining \eqref{G32} along with Lemma \ref{ingg}, we have
	$$ \BBG_s[\BBG_s[ \BBG_s[\la]^p]^{q}] \leq C\BBG_s[\BBG_s[\la]^{q\frac{p+1}{q+1}} ] \leq C\BBG_s[\la] \quad \text{a.e. in } \Omega.
	$$
	This completes the proof.
	\end{proof}

\section{Construction of the minimal solution}

\begin{lemma} \label{compa}  Assume  
$p,q>0$ and $\mu,\, \nu \in \GTM^+(\Gw,\delta^s)$. Assume in addition that there exist functions $V \in L^p(\Gw,\gd^s)$ and $U \in L^q(\Gw,\gd^s)$ such that 
\begin{equation} \label{UV} \begin{aligned}U &\geq \BBG_s[V^p] + \BBG_s[\mu] \quad \text{a.e. in } \Omega,\\
V &\geq \BBG_s[U^q] + \BBG_s[\nu] \quad \text{a.e. in } \Omega.
\end{aligned}\end{equation} 
Then there exists a positive minimal weak solution $(\underline u_\mu, \underline v_\nu)$ of  \eqref{eq:system} satisfying
	\begin{equation}\label{compa1} \BBG_s[\mu] \leq \underline u_\mu \leq U,\quad \BBG_s[\nu] \leq \underline v_\nu \leq V  \quad \text{a.e. in } \Omega.
\end{equation}
\end{lemma}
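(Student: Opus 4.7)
The strategy is to construct $(\underline u_\mu, \underline v_\nu)$ by a monotone Picard iteration, exploiting the positivity-preserving character of $\BBG_s$ and the monotonicity of $x\mapsto x^p$, $x\mapsto x^q$ on $[0,\infty)$. In view of the remark following Definition \ref{defsol}, it suffices to produce nonnegative $\underline u_\mu, \underline v_\nu \in L^1(\Gw)$ with $\underline v_\nu^{\,p}, \underline u_\mu^{\,q}\in L^1(\Gw,\gd^s)$ satisfying the pair of fixed-point identities in \eqref{uGM}.

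Set $u_0 := \BBG_s[\mu]$, $v_0 := \BBG_s[\nu]$, and for $n \geq 0$ define
\begin{equation*}
u_{n+1} := \BBG_s[v_n^{\,p}] + \BBG_s[\mu], \qquad v_{n+1} := \BBG_s[u_n^{\,q}] + \BBG_s[\nu].
\end{equation*}
I would first verify by induction on $n$ that $0 \leq u_n \leq u_{n+1} \leq U$ and $0 \leq v_n \leq v_{n+1} \leq V$ a.e. in $\Gw$. The base case is immediate from \eqref{UV}: $u_0 = \BBG_s[\mu] \leq \BBG_s[V^p] + \BBG_s[\mu] \leq U$, likewise $v_0 \leq V$, while $u_1\geq u_0$ and $v_1\geq v_0$ because $\BBG_s$ of a nonnegative quantity is nonnegative. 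For the inductive step, $u_{n-1}\leq u_n\leq U$ and $v_{n-1}\leq v_n\leq V$ combined with the monotonicity of $\BBG_s$ and of the power functions give $\BBG_s[v_{n-1}^{\,p}]\leq \BBG_s[v_n^{\,p}]\leq \BBG_s[V^p]$, whence $u_n \leq u_{n+1} \leq U$, and analogously for the $v$-component.

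Define $\underline u_\mu := \lim_n u_n$ and $\underline v_\nu := \lim_n v_n$ pointwise a.e.; these satisfy $\BBG_s[\mu] \leq \underline u_\mu \leq U$ and $\BBG_s[\nu] \leq \underline v_\nu \leq V$. Since $V^p \in L^1(\Gw,\gd^s)$ by assumption, Lemma \ref{estGM} (with $\alpha=0$, $\gamma=s$) yields $\BBG_s[V^p]\in L^1(\Gw)$, and the same lemma gives $\BBG_s[\mu]\in L^1(\Gw)$, so $\underline u_\mu \in L^1(\Gw)$; symmetrically $\underline v_\nu\in L^1(\Gw)$. The bounds $\underline v_\nu^{\,p}\leq V^p$ and $\underline u_\mu^{\,q}\leq U^q$ place $\underline v_\nu^{\,p}, \underline u_\mu^{\,q}$ in $L^1(\Gw,\gd^s)$, and monotone convergence applied under the integral defining $\BBG_s$ permits passage to the limit in the iteration, yielding \eqref{uGM}; by the remark $(\underline u_\mu, \underline v_\nu)$ is a weak solution of \eqref{eq:system} with the lower and upper bounds in \eqref{compa1}.

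For minimality, let $(u,v)$ be any positive weak solution. Then $u \geq \BBG_s[\mu] = u_0$ and $v \geq \BBG_s[\nu] = v_0$, and a straightforward induction using the identities $u=\BBG_s[v^p]+\BBG_s[\mu]$ and $v=\BBG_s[u^q]+\BBG_s[\nu]$ propagates $u\geq u_n$ and $v\geq v_n$ to every $n$, hence $u\geq \underline u_\mu$ and $v\geq \underline v_\nu$ a.e. in $\Gw$. The only genuine technical point is the limit passage: the supersolution pair $(U,V)$ from \eqref{UV} is used precisely to dominate the nonlinear terms $v_n^p$ and $u_n^q$ in $L^1(\Gw,\gd^s)$, legitimising the exchange of limit and integration inside $\BBG_s$; without such a dominating pair the iteration could fail to converge to an integrable fixed point, which is why the hypothesis \eqref{UV} is indispensable.
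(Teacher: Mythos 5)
Your proposal is correct and follows essentially the same route as the paper: the same Picard iteration started at $(\BBG_s[\mu],\BBG_s[\nu])$, monotonicity and domination by $(U,V)$ proved by induction, passage to the limit by monotone convergence to get the fixed-point identities \eqref{uGM}, and minimality by comparing any weak solution with the iterates inductively. The extra remarks on $L^1(\Gw)$ and $L^1(\Gw,\gd^s)$ integrability via Lemma \ref{estGM} and domination are consistent with (and slightly more explicit than) the paper's argument.
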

\begin{proof}
	Put $u_0:=\BBG_s[\mu] $, $v_0:=\BBG_s[\nu] $ and for $n\geq 1$, define
	\begin{equation}\begin{aligned} \label{induc}  u_n& :=\BBG_s[v_{n-1}^p]+ \BBG_s[\mu], \\
	 v_n& :=\BBG_s[u_{n-1}^q]+ \BBG_s[\nu].
	\end{aligned}\end{equation}	
	Clearly $u_0 \leq U$ and $v_0\leq V$ . Therefore,
	$$ u_1=\BBG_s[v_0^p] + \BBG_s[\mu] \leq \BBG_s[V^p] + \BBG_s[\mu] \leq U. $$ 
Similarly, $v_1\leq \BBG_s[U^q] + \BBG_s[\nu]\leq V$.
By induction, it follows that $u_n \leq U$ and $v_n\leq V$ for every $n \geq 1$. Also, it is easy to see that $\{u_n\}$ and $\{v_n\}$ are increasing sequences. Hence  $u_n \uparrow \underline u_\mu \leq U \in L^q(\Gw,\gd^s)$ and
$v_n \uparrow \underline v_\nu \leq V \in L^p(\Gw,\gd^s)$.
Therefore $\BBG_s[v_n^p] \uparrow \BBG_s[\underline v_\nu^p]$ and
$\BBG_s[u_n^q] \uparrow \BBG_s[\underline u_\mu^q]$
 a.e. in $\Gw$. Letting $n \to \infty$ in \eqref{induc}, we deduce that 
	$$ \underline u_\mu = \BBG_s[\underline v_\nu^p] + \BBG_s[\mu] \quad\text{and}\quad \underline v_\nu = \BBG_s[\underline u_\mu^q] + \BBG_s[\nu]. $$
	This means that $(\underline u_\mu, \underline v_\nu)$ is a weak solution of \eqref{eq:system}.  
	
Next, let $(u, v)$ be any positive weak solution \eqref{eq:system}. Then 
	$$u=\BBG_s[v^p]+\BBG_s[\mu] \geq u_0,\quad v=\BBG_s[u^q]+\BBG_s[\nu] \geq v_0.$$
Thus $$u\geq \BBG_s[v_0^p]+\BBG_s[\mu] \geq u_1, \quad v\geq \BBG_s[u_0^q]+\BBG_s[\nu] \geq v_1.$$
	By induction it follows that $u\geq u_n$ and $v\geq v_n$ for all $n\geq 1$. Hence $u\geq \underline u_\mu$ and $v\geq \underline v_\nu$. This completes the lemma.
\end{proof}
\begin{remark}\lab{l:3-9-1}
In stead of studing system \eqref{eq:system}, in the sequel, we will work on the following system 
\begin{equation} \label{source-3} \left\{ \begin{aligned}
(-\Delta)^s u &= v^p + \rho\mu \quad &&\text{in } \Omega, \\
(-\Delta)^s v &= u^q + \tau \nu \quad &&\text{in } \Omega, \\
u =v&=0 \quad &&\text{in } \Omega^c,
\end{aligned} \right.
\end{equation}
where $\rho, \tau$ are positive parameters and $\mu,\nu \in \GTM^+(\Omega,\gd^s)$ such that $\| \mu \|_{\GTM(\Omega,\gd^s)}=\| \nu \|_{\GTM(\Omega,\gd^s)}=1$. The advantage is when dealing with system \eqref{source-3}, we can easily apply Lemma \ref{inggs} and Lemma \ref{G3} for $\BBG_s[\mu]$ and $\BBG_s[\nu]$ and  require only the smallness of the parameters $\rho$ and $\tau$, which improves considerably the exposition.
\end{remark}

We recall that in the sequel, we assume that $0<p \leq q$ and hence if $pq \geq 1$ then \eqref{pqrel} holds. 
 
\begin{theorem} \label{existcoup-1}
Let $p,\, q,\,\rho,\, \tau >0$ and $\mu, \nu \in \GTM^+(\Gw,\delta^s)$ such that $\| \mu \|_{\GTM(\Omega,\gd^s)}=\| \nu \|_{\GTM(\Omega,\gd^s)}=1$. Assume $p \leq q$, $pq \neq 1$, $q\frac{p+1}{q+1}<N_s$ and $\BBG_s[\mu]\in L^{q}(\Gw,\gd^{s})$. Then system \eqref{source-3}
admits the minimal weak solution $(\underline u_{\rho \mu},\underline v_{\tau \nu})$ for $\rho$ and $\tau$ small if $pq>1$, for any $\rho>0$ and $\tau>0$ if $pq<1$. 

In addition, if $p \leq q<N_s$, then
there exists a constant $K=K(N,s,p,q,\Gw,\rho,\tau)$ such that $K \to 0$ as $(\rho,\tau)\to (0,0)$ and
\begin{equation} \label{leub}
\max\{ \underline u_{\rho\mu},\underline v_{\tau \nu} \} \leq K \BBG_s[ \mu+  \nu] \quad \text{a.e. in } \Omega.
\end{equation}
\end{theorem}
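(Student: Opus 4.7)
The plan is to apply Lemma~\ref{compa}: it is enough to construct a nonnegative pair $(U,V)$ with $U\in L^{q}(\Gw,\gd^s)$, $V\in L^{p}(\Gw,\gd^s)$ satisfying the supersolution inequalities \eqref{UV} (with $\mu,\nu$ replaced by $\rho\mu,\tau\nu$). Set $\sigma:=\mu+\nu$, $\phi:=\BBG_s[\sigma]$, and $\psi:=\BBG_s[\BBG_s[\mu]^q]$. The hypothesis $\BBG_s[\mu]\in L^{q}(\Gw,\gd^s)$ gives $\BBG_s[\mu]^q\in L^1(\Gw,\gd^s)$, so $\psi$ is well-defined and belongs to $M^{N_s}(\Gw,\gd^s)\subset L^p(\Gw,\gd^s)$ by Lemma~\ref{estGM}. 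The condition $q(p+1)/(q+1)<N_s$ (which, together with $p\le q$, forces $p<N_s$) lets me fix an auxiliary exponent $\gamma$ with $\max(0,p-N_s+1)<\gamma\le 1$ and $q\gamma<N_s$, exactly as in the proof of Lemma~\ref{G3}.

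I will use the ansatz $V=B_1\phi+B_2\psi$ and set $U:=\BBG_s[V^p]+\rho\BBG_s[\mu]$, which makes the first line of \eqref{UV} an equality. Expanding $V^p$ via $(a+b)^p\le 2^{p-1}(a^p+b^p)$ (when $p\ge 1$; subadditivity when $p\le 1$), and applying Lemma~\ref{inggs} separately to the two normalized positive measures $\sigma/\|\sigma\|_{\GTM(\Gw,\gd^s)}$ and $\BBG_s[\mu]^q\gd^s/\|\BBG_s[\mu]^q\|_{L^1(\Gw,\gd^s)}$ with inner exponent $p$ and target exponent $\gamma$, I obtain $\BBG_s[V^p]\le C(B_1^p\phi^\gamma+B_2^p\psi^\gamma)$. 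Since $q\gamma<N_s$, Lemma~\ref{estGM} yields $\phi^\gamma,\psi^\gamma\in L^q(\Gw,\gd^s)$, and together with the hypothesis on $\BBG_s[\mu]$ this gives $U\in L^q(\Gw,\gd^s)$.

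Next I verify the second line of \eqref{UV}. Raising $U$ to the $q$-th power via $(a+b+c)^q\le 3^{q-1}(a^q+b^q+c^q)$ (or subadditivity when $q\le 1$), applying $\BBG_s$, and invoking Lemma~\ref{ingg} (admissible thanks to $q\gamma<N_s$), I bound $\BBG_s[\phi^{q\gamma}]\le C\phi$ and $\BBG_s[\psi^{q\gamma}]\le C\psi$; the remaining term collapses as $\BBG_s[\rho^q\BBG_s[\mu]^q]=\rho^q\psi$. Combined with $\BBG_s[\nu]\le\phi$, the desired inequality $V\ge \BBG_s[U^q]+\tau\BBG_s[\nu]$ reduces to the decoupled scalar system
\begin{equation*}
B_1\ge c_1 B_1^{pq}+\tau,\qquad B_2\ge c_2 B_2^{pq}+c_3\rho^q,
\end{equation*}
with $c_1,c_2,c_3$ depending only on $N,s,p,q,\Gw$ and $\|\BBG_s[\mu]\|_{L^q(\Gw,\gd^s)}$. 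When $pq>1$ an elementary analysis of $t\mapsto ct^{pq}-t$ shows both inequalities are solvable for small $\rho,\tau>0$, with correspondingly small $B_1,B_2$; when $pq<1$ the $pq$-power is sublinear at infinity, so both are solvable for arbitrary $\rho,\tau>0$ by taking $B_1,B_2$ large. Lemma~\ref{compa} then supplies the minimal solution $(\underline u_{\rho\mu},\underline v_{\tau\nu})$.

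For the estimate \eqref{leub} in the subregime $q<N_s$, Lemma~\ref{ingg} now applies directly to $\mu$ with inner exponent $q$, yielding $\psi\le C\BBG_s[\mu]\le C\phi$; the $\psi$-component of $V$ can then be absorbed into the $\phi$-component and one may take $\gamma=1$. Tracking how $B_1,B_2$ (and hence the constant multiplying $\phi$ in $U$) depend on $\rho,\tau$ in the scalar analysis above produces $\max\{\underline u_{\rho\mu},\underline v_{\tau\nu}\}\le K\phi$ with $K\to 0$ as $(\rho,\tau)\to(0,0)$. The step I expect to be the main obstacle is the delicate bookkeeping of exponents: $\gamma$ must sit in the admissible window of Lemma~\ref{inggs} and simultaneously satisfy $q\gamma<N_s$ so that Lemma~\ref{ingg} closes the loop and $\phi^\gamma,\psi^\gamma\in L^q(\Gw,\gd^s)$. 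The inclusion of the auxiliary $\psi=\BBG_s[\BBG_s[\mu]^q]$ in the ansatz is essential precisely in the regime $q\ge N_s$, where $\psi$ is not dominated by $\phi$ and the hypothesis $\BBG_s[\mu]\in L^q(\Gw,\gd^s)$ enters the construction decisively.
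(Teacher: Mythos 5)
Your proposal is correct and follows essentially the same route as the paper: the paper's proof sets $V=A\,\BBG_s[\kappa(\BBG_s[\vartheta_1\mu]^q+\vartheta_2\nu)]$, $U=\BBG_s[V^p]+\BBG_s[\rho\mu]$, closes the supersolution inequalities with Lemma \ref{G3} (whose proof is exactly your choice of an exponent $\gamma$ in the window $\left(\max(0,p-N_s+1),\min\{1,N_s/q\}\right)$, namely $\gamma=\frac{p+1}{q+1}$) and reduces to the single algebraic condition $C(A^{pq}\kappa^{pq-1}+1)\le A$ before invoking Lemma \ref{compa}, while your two-amplitude ansatz $B_1\BBG_s[\mu+\nu]+B_2\BBG_s[\BBG_s[\mu]^q]$ with termwise use of Lemmas \ref{inggs} and \ref{ingg} and two decoupled scalar inequalities is only a cosmetic repackaging of this. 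The same caveat applies to both arguments: the amplitudes (your $B_1,B_2$, the paper's $A\kappa$) tend to $0$ with the data only when $pq>1$, so the claim $K\to0$ in \eqref{leub} is really established in that regime, exactly as in the paper's own proof.
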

\begin{proof} Fix numbers $\vartheta_i>0$, ($i=1,2$) and set
	\begin{equation} \label{Psi} \Psi:=\BBG_s[\vartheta_1 \mu]^{q} + \vartheta_2 \nu.
	\end{equation}
	For $\gk \in(0,1]$, put
	\begin{equation} \label{rt} \rho= \gk^{\frac{1}{q}}\vartheta_1, \quad \tau= \gk \vartheta_2
	\end{equation}
	and consider system \eqref{source-3} with $\rho$ and $\tau$ as in \eqref{rt}. From the assumptions, $\BBG_s[\mu]^q \in L^1(\Omega,\gd^s)$ and $\nu \in \GTM^+(\Gw,\gd^s)$, it follows that  $\Psi \in \GTM^+(\Gw,\gd^{s  })$. Also we note that $p\leq q$ and $q\frac{p+1}{q+1}<N_s$ imply $p<N_s$.
Set
$$ V:=A\BBG_s[\gk\Psi] \quad \text{and} \quad U:=\BBG_s[ V^p] + \BBG_s[ \rho \mu ]$$
where $A>0$ will be determined later on. Then, 
\begin{align*} U^q  &\leq c (  \BBG_s [V^p]^q + \BBG_s[\rho \mu]^q)  \\
&= c \Big\{  A^{p q} \gk^{pq} \BBG_s[ \BBG_s[\Psi]^p]^{q} +   \BBG_s[\rho \mu]^{q}\Big\}
\end{align*}
where $c=c(p,q)$. It follows that
\begin{equation} \label{lat} \begin{aligned}
\BBG_s[U^q]+\BBG_s[\tau \nu] &\leq c (A^{p q} \gk^{pq} \BBG_s[ \BBG_s[ \BBG_s[\Psi]^p]^{q}] + \BBG_s[\BBG_s[\rho \mu]^q]) + \BBG_s[\tau \nu] \\
&= c (A^{p q} \gk^{pq} \BBG_s[ \BBG_s[ \BBG_s[\Psi]^p]^{q}] + \kappa \BBG_s[\BBG_s[\vartheta_1 \mu]^q]) + \gk \BBG_s[\vartheta_2 \nu] \\
&\leq c (A^{p q} \gk^{pq} \BBG_s[ \BBG_s[ \BBG_s[\Psi]^p]^{q}] +  \gk \BBG_s[\Psi]).
\end{aligned} \end{equation}
Since  $q\frac{p+1}{q+1} <N_s$, applying Lemma \ref{G3}, we have
\begin{equation} \label{laca}\BBG_s[\BBG_s[\BBG_s[\Psi]^p]^{q}] \leq C\| \Psi \|_{\GTM(\Gw,\gd^s)}^{pq-1} \BBG_s[\Psi]
\end{equation}
where $C=C(N,s,p,q,\Gw)$. Combining the definition of $\Psi$ in \eqref{Psi}, Lemma \ref{G3}, Lemma \ref{estGM} and the assumption that $\| \mu \|_{\GTM(\Gw,\gd^s)}=\| \nu \|_{\GTM(\Gw,\gd^s)}=1$, we can estimate
$ C^{-1} \leq \| \Psi \|_{\GTM(\Gw,\gd^s)}  \leq C $
for some positive constant $C$ independent of $A$ and $\kappa$. This, combined with \eqref{lat} and \eqref{laca} implies
\begin{equation} \label{leni} \BBG_s[U^q]+\BBG_s[\tau \nu]  \leq C(A^{pq}\gk^{pq} + \gk)\BBG_s[\Psi].
\end{equation}
for some positive constant $C$ independent of $A$ and $\kappa$.

We will choose $A$ and $\gk$ such that
\begin{equation} \label{GU<V} \BBG_s[U^q]+\BBG_s[\tau \nu]  \leq V.
\end{equation}
For that, it is sufficient to choose $A$ and $\gk$ such that
$$ C(A^{pq}\gk^{pq} + \gk)\BBG_s[\Psi] \leq V.
$$
This holds if 
\begin{equation} \label{rl} C(A^{pq}\gk^{pq-1} + 1) \leq A. 
\end{equation}
If $p q>1$ then we can choose $A>0$ large enough and then choose  $\gk>0$ small enough (depending on $A$) such that \eqref{rl} holds. If $p q<1$ then for any $\gk>0$ there exists $A$ large enough such that \eqref{rl} holds. For such $A$ and $\gk>0$, we obtain  \eqref{GU<V}. Consequently, $(U,V)$ satisfies \eqref{UV}.  By Lemma \ref{compa}, there exists a weak solution $(\underline u_{\rho \mu}, \underline v_{\tau \nu})$ of \eqref{source-3} for $\rho>0$ and $\tau>0$ small if $p q>1$, for any $\rho>0$ and $\tau>0$ if $p q<1$. Moreover, $(\underline u_{\rho \mu}, \underline v_{\tau \nu})$ satisfies \eqref{compa1}. 

Next, assuming in addition that $p \leq q<N_s$, we will demonstrate \eqref{leub}. From the definition of $V$ and Lemma \ref{inggs}, we see that
\begin{equation} \label{V1} \begin{aligned} V &=A\kappa\BBG_s[\Psi] = A\kappa(\BBG_s[\BBG_s[\vartheta_1 \mu]^q] + \BBG_s[\vartheta_2 \nu]) \\
& \leq c A\kappa \BBG_s[\mu + \nu].
\end{aligned} \end{equation}

It follows that
$$ V^p \leq C \kappa^p( \BBG_s[\mu]^p + \BBG_s[\nu]^p ).
$$
Therefore, 
\begin{equation} \label{U1} \begin{aligned} U &\leq C \kappa^p (\BBG_s[\BBG_s[\mu]^p] + \BBG_s[\BBG_s[\nu]^p]) + \kappa^\frac{1}{q} \BBG_s[\vartheta_1 \mu] \\
&\leq C\kappa^p \BBG_s[\mu + \nu] + \kappa^\frac{1}{q}\vartheta_1 \BBG_s[ \mu] \\
&\leq C\max\{ \kappa^p,\kappa^\frac{1}{q} \} \BBG_s[\mu + \nu].
\end{aligned} \end{equation}
Combining \eqref{U1} and \eqref{V1} along with \eqref{compa1} leads to \eqref{leub}.
\end{proof}

\begin{remark} \label{Linf} By Lemma \ref{regularity2}(i), we see that if $\mu, \nu \in L^r(\Gw)$ with $r>\frac{N}{2s}$ then $\BBG_s[\mu + \nu] \in L^\infty(\Gw)$. From  Theorem \ref{existcoup-1}, if $\rho$ and $\tau$ are small  then system \eqref{source-3} admits a minimal solution $(\underline u_{\rho \mu}, \underline v_{\tau \nu})$ which satisfies \eqref{leub}. It follows that $\underline u_{\rho \mu}, \underline v_{\tau \nu} \in L^\infty(\Gw)$. Moreover, $(\underline u_{\rho \mu}, \underline v_{\tau \nu}) \to (0,0)$  a.e. in $\Omega$ as $(\rho,\tau) \to (0,0)$.	
\end{remark}

\vspace{2mm}

{\bf Proof of Theorem \ref{existcoup} completed}: Combining Theorem \ref{existcoup-1} along with Remark \ref{l:3-9-1}, the proof of the theorem follows.

\section{A priori estimates and regularity}
In this section, we provide a priori estimates, as well as regularity properties, of weak solutionsof \eqref{eq:system}.
\subsection{A priori estimates}
\begin{lemma} \label{apriori} Assume   $p>1, \,q> 1$ and $\mu,\, \nu \in \GTM^+(\Gw,\gd^s)$. If $(u,v)$ is a weak solution of \eqref{eq:system} then there is a positive constant $c=c(N,s,p,q,\Gw)$ such that
	\begin{equation}\begin{aligned} \label{rem1} \norm{u}_{L^1(\Gw)} + \norm{v}_{L^p(\Gw,\gd^s)}& \leq c(1+\norm{\mu}_{\GTM(\Gw,\gd^s)}),\\
	\norm{v}_{L^1(\Gw)} + \norm{u}_{L^q(\Gw,\gd^s)} &\leq c(1+\norm{\nu}_{\GTM(\Gw,\gd^s)}). 
	\end{aligned}
	\end{equation}
\end{lemma}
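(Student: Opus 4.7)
The plan is to test the weak formulation \eqref{intN} against a carefully chosen function $\xi_0$ in order to convert the two components of \eqref{eq:system} into comparable weighted $L^1$-identities, and then to close the system by combining these identities with the Marcinkiewicz bounds of Section 2 applied to the representation \eqref{uGM}.

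The natural choice is $\xi_0 := \BBG_s[\mathbf 1]$, which lies in $\BBX_s(\Gw)$, satisfies $\fw \xi_0 = 1$ in $\Gw$, and by Remark \ref{G1} (with $\gth=1$) is pointwise comparable to $\gd^s$. Inserting $\xi_0$ into \eqref{intN} for both equations of \eqref{eq:system} and invoking this two-sided comparability yields
\begin{align*}
\|u\|_{L^1(\Gw)} &\asymp \|v\|_{L^p(\Gw,\gd^s)}^p + \|\mu\|_{\GTM(\Gw,\gd^s)}, \\
\|v\|_{L^1(\Gw)} &\asymp \|u\|_{L^q(\Gw,\gd^s)}^q + \|\nu\|_{\GTM(\Gw,\gd^s)}.
\end{align*}
In particular, the two norms appearing on the left of each inequality of \eqref{rem1} are, up to universal constants, already equivalent.

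It remains to bound the weighted $L^p$- and $L^q$-norms. For this I would apply the Marcinkiewicz estimate of Lemma \ref{estGM} with $\ga=\gg=s$ to the representations $u=\BBG_s[v^p]+\BBG_s[\mu]$ and $v=\BBG_s[u^q]+\BBG_s[\nu]$, obtaining
\[
\|u\|_{M^{N_s}(\Gw,\gd^s)} \leq C\big(\|v\|_{L^p(\Gw,\gd^s)}^p + \|\mu\|_{\GTM(\Gw,\gd^s)}\big),
\]
and symmetrically for $v$. Passing from the Marcinkiewicz norm to the Lebesgue norm via Lemma \ref{LpMk} then produces a coupled pair of inequalities relating $\|u\|_{L^q(\Gw,\gd^s)}^q$ and $\|v\|_{L^p(\Gw,\gd^s)}^p$. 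Feeding these back into the two $L^1$-identities from step one and applying Young's inequality to absorb the nonlinear cross-terms into the left-hand sides should deliver the linear dependence on $\|\mu\|_{\GTM(\Gw,\gd^s)}$, respectively $\|\nu\|_{\GTM(\Gw,\gd^s)}$, required by \eqref{rem1}.

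The main obstacle is the Marcinkiewicz-to-Lebesgue step: Lemma \ref{LpMk} requires the exponent to lie strictly below $N_s$, whereas Lemma \ref{apriori} places no upper bound on $p$ or $q$. I therefore expect the honest proof to bypass this restriction by first extracting weighted $L^\kappa$-control for a small $\kappa$ within the valid range, and then chaining upward by a short bootstrap that leans on the integrabilities $v^p,u^q\in L^1(\Gw,\gd^s)$ built into the very definition of weak solution; the delicate point is to iterate without letting the constants depend on $\|\mu\|_{\GTM(\Gw,\gd^s)}$ or $\|\nu\|_{\GTM(\Gw,\gd^s)}$, so that the final constant stays a function of $N,s,p,q,\Gw$ alone.
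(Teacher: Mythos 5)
Your first step is sound and is essentially what the paper does: testing \eqref{intN} with a fixed test function comparable to $\gd^s$ (the paper uses the first eigenfunction $\vgf_1$ of $\fw$ rather than $\BBG_s[1]$, which changes nothing essential) turns the system into the two weighted identities you write down. The gap is in your closure. Set $X=\norm{v}_{L^p(\Gw,\gd^s)}$ and $Y=\norm{u}_{L^q(\Gw,\gd^s)}$. Even in the range $p,q<N_s$ where Lemma \ref{estGM} and Lemma \ref{LpMk} apply, your Marcinkiewicz step only yields the coupled inequalities $Y\leq C(X^p+\norm{\mu}_{\GTM(\Gw,\gd^s)})$ and $X\leq C(Y^q+\norm{\nu}_{\GTM(\Gw,\gd^s)})$, in which each unknown is controlled by a \emph{superlinear} power of the other ($pq>1$). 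Such a system cannot be closed by Young's inequality or any absorption argument: Young controls a lower power by a small multiple of a higher power plus a constant, never the reverse, and the composite inequality $X\lesssim X^{pq}+\dots$ is satisfied by arbitrarily large $X$, so no a priori bound comes out. This is precisely the regime in which the paper needs smallness of the data for existence (Theorem \ref{existcoup}), and no smallness is available here. Your proposed bootstrap for $p,q\geq N_s$ inherits the same defect, on top of the exponent restriction you yourself flagged.

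The paper closes the estimate by exploiting the superlinearity in the opposite, favorable direction, and this is the idea missing from your proposal. Testing \emph{both} equations with $\vgf_1$ and using $\fw\vgf_1=\gl_1\vgf_1$ gives identities in which $\int_\Gw v^p\vgf_1\,dx$ and $\int_\Gw u^q\vgf_1\,dx$ each appear once; then Young's inequality in the form $\int_\Gw v\vgf_1\,dx\leq(2\gl_1)^{-1}\int_\Gw v^p\vgf_1\,dx+C$ (and similarly for $u$; this is where $p,q>1$ enters) lets the two nonlinear integrals absorb each other when the identities are combined, while the measure terms occur with a favorable sign and may simply be dropped. The outcome is the \emph{absolute} bounds $\norm{v}_{L^p(\Gw,\gd^s)}^p\leq C$ and $\norm{u}_{L^q(\Gw,\gd^s)}^q\leq C$ with $C=C(N,s,p,q,\Gw)$ independent of $\mu,\nu$ and with no upper restriction on $p,q$. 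Only afterwards does the paper use the representation \eqref{uGM} together with Lemma \ref{estGM} (with $\ga=0$, $\gg=s$) and Lemma \ref{LpMk} --- and only at exponent $1$, so again no condition on $p,q$ is needed --- to convert these into the $L^1$ bounds of \eqref{rem1}, which is where the linear dependence on $\norm{\mu}_{\GTM(\Gw,\gd^s)}$ and $\norm{\nu}_{\GTM(\Gw,\gd^s)}$ comes from. Your first step can stand, but the closure must be replaced by this eigenfunction--Young absorption argument.
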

\begin{proof}	
We prove this lemma in the spirit of \cite[Lemma 4.1]{BN}. Let $(\la_1,\vgf_1)$ be the first eigenvalue and corresponding positive eigenfunction of $(-\De)^s$ in $X_0(\Om)$ (see the definition of $X_0$ in \eqref{eq:X0}). By \cite[Lemma 2.1(ii)]{CV}, $\vgf_1\in \BBX_s(\Om)$, and hence by taking $\zeta=\vgf_1$ in \eqref{intN}, we obtain
\begin{equation}\begin{aligned} \label{rer} \gl_1 \int_{\Gw} u\vgf_1 dx&= \int_{\Gw} v^p \vgf_{1} dx  +\gl_1\int_{\Gw} \vgf_1 d\mu ,\\
\gl_1 \int_{\Gw} v\vgf_1 dx&= \int_{\Gw} u^q \vgf_{1} dx  +\gl_1\int_{\Gw} \vgf_1 d\nu .
 \end{aligned}
	\end{equation}
	By Young's inequality, we get
	\begin{equation}\begin{aligned}  \label{rem2} 
	\int_{\Gw} v\vgf_1 dx &\leq (2\gl_1)^{-1}\int_{\Gw} v^p\vgf_{1} dx + (2\gl_1)^{\frac{1}{p-1}}\int_{\Gw}\vgf_1dx,\\
	\int_{\Gw} u\vgf_1 dx &\leq (2\gl_1)^{-1}\int_{\Gw} u^q\vgf_{1} dx + (2\gl_1)^{\frac{1}{q-1}}\int_{\Gw}\vgf_1dx.
	 \end{aligned}\end{equation}	
Substituting \eqref{rem2} in \eqref{rer} we have
\begin{equation} \begin{aligned} \label{rem3} 2\int_{\Gw} v^p \vgf_1 dx  +
	2\gl_1 \int_{\Gw} \vgf_1 d\mu &\leq \int_{\Om} u^q\vgf_1 dx + (2\gl_1)^{\frac{q}{q-1}}\int_{\Gw}\vgf_1 dx,\\
	2\int_{\Gw} u^q \vgf_1 dx  +
	2\gl_1 \int_{\Gw} \vgf_1 d\nu &\leq \int_{\Om} v^p\vgf_1 dx +(2\gl_1)^{\frac{p}{p-1}}\int_{\Gw}\vgf_1 dx.
 \end{aligned}\end{equation}	
Therefore,
$$
\frac{3}{2}\int_{\Gw} v^p \vgf_1 dx  +\la_1\int_{\Gw} \vgf_1 d(2\mu+\nu)\leq \bigg(\frac{(2\la_1)^\frac{p}{p-1}}{2}+ (2\la_1)^\frac{q}{q-1}\bigg)\int_{\Om}\vgf_1 dx.
$$
Since the second term on the left-hand side of above expression is nonnegative, taking into account that $c^{-1}\delta^s < \vgf_1 < c\delta^s$ in $\Omega$, we have
	\begin{equation} \label{rem4} \norm{v}_{L^p(\Gw,\gd^s)}^p \leq C(\la_1)\int_{\Gw}\gd^s dx \leq c'.
	\ee
Similarly \begin{equation} \label{rem4'} \norm{u}_{L^q(\Gw,\gd^s)}^q \leq C'.\ee
Next, combining \eqref{uGM}, Lemma \ref{LpMk} and Lemma \ref{estGM} with $\gamma=s,\, \al=0$ we obtain
	\begin{equation}\begin{aligned}  \label{rem5} 
	\|u\|_{L^1(\Om)} &\leq \|\BBG_s[v^p]\|_{L^1(\Om)}+\|\BBG_s[\mu]\|_{L^1(\Om)} \\ 
	&\leq 	C\big(\|\BBG_s[v^p]\|_{M^{N_s}(\Om)}+\|\BBG_s[\mu]\|_{M^\frac{N}{N-s}(\Om)}\big)\\
	&\leq C\big( \|v\|_{L^p(\Om, \de^s)}^p+\|\mu\|_{\mathfrak{M}(\Om,\gd^s)}\big).
	\end{aligned}\end{equation}
Hence first expression of \eqref{rem1} holds by combining \eqref{rem4} and \eqref{rem5}. Similarly, the second expression of \eqref{rem1} follows.	
\end{proof}

\subsection{Regularity}
\begin{theorem} \label{reg} Let $p,q \in (1,N_s)$.  
(i) Assume $\mu,\nu \in L^r(\Gw)$ for some $r>\frac{N}{2s}$. If $(u,v)$ is a nonnegative weak solution of \eqref{eq:system} then $u,v \in L_{loc}^\infty(\Gw)$.	

(ii) Assume $\mu,\nu \in L^r(\Gw) \cap L_{loc}^\infty(\Gw)$. If $(u,v)$ is a nonnegative weak solution of \eqref{eq:system} then $u,v \in C_{loc}^{\alpha}(\Gw)$ for some $\alpha \in (0,2s)$.	
\end{theorem}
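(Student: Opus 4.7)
The plan is to bootstrap the integrability of $u$ and $v$ via the integral representation \eqref{uGM} together with the mapping properties of the Green operator from Lemmas \ref{estGM}--\ref{regularity2}.

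For (i), the starting point is Lemma \ref{apriori}: $v^p,u^q \in L^1(\Gw,\gd^s)$, and since $\gd^s$ is bounded below on any compact subset of $\Gw$ this gives $v \in L^p_{loc}(\Gw)$ and $u \in L^q_{loc}(\Gw)$. The assumption $\mu,\nu \in L^r(\Gw)$ with $r>N/(2s)$ combined with Lemma \ref{regularity2}(i) yields $\BBG_s[\mu], \BBG_s[\nu] \in L^\infty(\Gw)$, so these inhomogeneous terms in \eqref{uGM} play no role in the bootstrap. Fix compact sets $K \subset\subset K' \subset\subset \Gw$ and for $x\in K$ I split $\BBG_s[v^p](x)=\BBG_s[v^p\chi_{K'}](x)+\BBG_s[v^p\chi_{\Gw\setminus K'}](x)$. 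The ``far'' piece is uniformly bounded on $K$ because \eqref{2.33} gives $G_s(x,y)\lesssim \gd(y)^s \dist(K,\partial K')^{s-N}$ for $(x,y)\in K\times(\Gw\setminus K')$, and $v^p\in L^1(\Gw,\gd^s)$; the ``near'' piece is a localized Riesz potential for which the Hardy--Littlewood--Sobolev inequality (a localized form of Lemma \ref{regularity2}(ii)) gives the following implication: if $v\in L^\sigma(K')$ with $\sigma/p<N/(2s)$, then $u\in L^{N\sigma/(pN-2s\sigma)}(K)$, while if $\sigma/p>N/(2s)$ then $u\in L^\infty(K)$ by Lemma \ref{regularity2}(i). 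A symmetric statement with $p,q$ swapped controls $v$ in terms of $u$.

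Composing the two directions produces a one-step recursion $\kappa_{n+1}= N\kappa_n/(Npq-2s(p+1)\kappa_n)$ on the $u$-exponent, with unique fixed point $\kappa^*=N(pq-1)/(2s(p+1))$ and vertical asymptote $\kappa_{\mathrm{blow}}=Npq/(2s(p+1))$. A short calculation shows that the subcritical bound $p,q\in(1,N_s)$ forces $\kappa_0:=q>\kappa^*$, so the recursion is strictly increasing; it crosses $\kappa_{\mathrm{blow}}$ in finitely many iterations (with exponents eventually skipping to $L^\infty$ by Lemma \ref{regularity2}(i) applied to the near piece). This gives $u\in L^\infty(K)$, and the symmetric argument gives $v\in L^\infty(K)$. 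Since $K\subset\subset \Gw$ is arbitrary, $u,v\in L^\infty_{loc}(\Gw)$.

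For (ii), the extra hypothesis $\mu,\nu\in L^\infty_{loc}(\Gw)$ together with (i) ensures $v^p+\mu,\, u^q+\nu\in L^\infty_{loc}(\Gw)$. Using the same near/far splitting on any $K\subset\subset\Gw$ and applying Lemma \ref{regularity2}(i) with arbitrarily large $t$ on the near piece (the far piece is smooth on $K$ because $G_s(\cdot,y)$ is smooth away from $y$), I obtain $u,v\in C^\ga_{loc}(\Gw)$ with $\ga$ any number in $(0,\min\{s,2s\})=(0,s)$, which gives the claim for some $\ga\in(0,2s)$.

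The main obstacle is verifying that the bootstrap strictly improves the exponent at every step until the $L^\infty$ threshold is reached; this relies crucially on the subcriticality $p,q<N_s$, which is essentially sharp since the recursion stalls exactly at the critical case. A secondary but real technical point is the rigorous localization of the Green operator estimates (stated globally in Lemmas \ref{estGM}--\ref{regularity2}) via the near/far splitting, which uses the pointwise bounds \eqref{G0}--\eqref{2.34} on $G_s$ to absorb the ``far'' piece into a bounded term.
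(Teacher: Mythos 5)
Your strategy is essentially the paper's: represent the solution via \eqref{uGM}, split $\BBG_s[v^p]$ and $\BBG_s[u^q]$ into a near piece and a far piece, bound the far piece by the kernel estimates and $v^p,u^q\in L^1(\Gw,\gd^s)$, absorb $\BBG_s[\mu],\BBG_s[\nu]$ by Lemma \ref{regularity2}(i), and bootstrap the near piece with Lemma \ref{regularity2}(ii) until the exponent passes $\frac{N}{2s}$. Your composed recursion $\kappa_{n+1}=N\kappa_n/(Npq-2s(p+1)\kappa_n)$ and the fixed-point analysis are correct (one checks $q>\kappa^*$ exactly because $p,q<N_s<\frac{N}{N-2s}$), and it is a cleaner way to see termination than the paper's two interleaved sequences $\ell_n,\tilde\ell_n$. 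However, there is one step that fails as written: your iteration starts at $\kappa_0=q$, so the very first half-step applies the $L^t\to L^{Nt/(N-2ts)}$ bound with $t=\sigma/p=1$ (respectively $t=\kappa_0/q=1$). Lemma \ref{regularity2}(ii) requires $t>1$, and the endpoint strong-type estimate $L^1\to L^{N/(N-2s)}$ is false for Green/Riesz-type potentials; only the weak-type bound holds. This is precisely why the paper first passes through the Marcinkiewicz estimate (Lemma \ref{estGM}) combined with Lemma \ref{LpMk} to get $u,v\in L^\theta$ locally for every $\theta<\frac{N}{N-2s}$, and only then runs the Lebesgue-space bootstrap. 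The fix slots into your scheme without change: take $\kappa_0$ any value in $(q,\frac{N}{N-2s})$ (possible since $q<N_s<\frac{N}{N-2s}$), after which all subsequent $t$'s are strictly between $1$ and $\frac{N}{2s}$ (modulo the harmless adjustment when an exponent hits $\frac{N}{2s}$ exactly, which the paper dismisses by a WLOG and you should too).

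For part (ii) you diverge mildly from the paper: the paper simply notes $v^p+\mu,\,u^q+\nu\in L^\infty_{loc}$ and invokes the interior Schauder estimates of Ros-Oton--Serra \cite{RS1} for $(-\Gd)^s$ with $L^1$ global data, while you redo the near/far splitting and use Lemma \ref{regularity2}(i) with large $t$ on the near piece. That works, but your assertion that the far piece is smooth on $K$ needs justification (e.g. it is $s$-harmonic in the interior of $K'$, hence locally smooth by interior regularity, or one uses gradient bounds on $G_s$ not stated in the paper); citing the Schauder estimate as the paper does is the shorter route and also yields the stated range $\alpha\in(0,2s)$ rather than only $\alpha<s$ (which is still sufficient for the statement).
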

\begin{proof} (i) We first assume that $\mu,\nu \in L^r(\Gw)$ for some $r>\frac{N}{2s}$. Let $(u, v)$ be a  nonnegative weak solution of \eqref{eq:system}. Then $u \in L^1(\Om)\cap L^q(\Gw,\gd^s)$, $v \in L^1(\Om)\cap L^p(\Gw,\gd^s)$ and $(u,v)$ satisfies \eqref{uGM}. Let $x_0 \in \Gw$ and $r>0$ such that $B(x_0,2r) \subset\subset  \Gw$. For any $j \in \BBN$, set $B_j:=B(x_0,2^{-j}r)$. For any $j \in \BBN$, we can write 
	\begin{equation}\begin{aligned} \label{decomp} 
	u &= \BBG_s[\chi_{\Gw \setminus B_j}v^p] + \BBG_s[\chi_{B_j}v^p] + \BBG_s[\mu],\\ 
	v &=\BBG_s[\chi_{\Gw \setminus B_j}u^q] + \BBG_s[\chi_{B_j}u^q] + \BBG_s[\nu].
	\end{aligned}\end{equation} 
	
	Observe that, for $x \in B_{j+1}$, by \eqref{G0},
	\begin{align*}
	\BBG_s[\chi_{\Gw \setminus B_j}v^p](x) = \int_{\Gw \setminus B_j} v(y)^pG_s(x,y)dy 
	&\leq C\gd(x)^s  \int_{\Gw \setminus B_j} v(y)^p\gd(y)^s |x-y|^{-N}dy \\
	&\leq C 2^{(j+1)N} r^{-N} \| v \|_{L^p(\Gw,\gd^s)}^p<\infty.
	\end{align*}
	Therefore, 
	\begin{equation} 
	\label{lubu} \BBG_s[\chi_{\Gw \setminus B_j}v^p] \in L^\infty(B_{j+1}) \quad \forall j \in \BBN. 
	\end{equation}
	Similarly,
	\begin{equation} \label{lubu-1} \BBG_s[\chi_{\Gw \setminus B_j}u^q] \in L^\infty(B_{j+1}) \quad \forall \, j \in \BBN. \end{equation}
	Since $\mu,\nu \in L^r(\Gw)$ for $r>\frac{N}{2s}$, by Lemma \ref{regularity2} (i), we deduce 
	\begin{equation} \label{Ginfty} \BBG_s[\mu], \, \BBG_s[\nu] \in L^\infty(\Gw). \end{equation}
	
Further, as  $u \in L^q(\Gw,\gd^s)$ and $v \in  L^p(\Gw,\gd^s)$, we have  $\chi_{B_0}u \in L^q(\Omega)$ and $\chi_{B_0}v \in L^p(\Omega)$ and therefore, applying Lemma \ref{estGM} and Lemma \ref{LpMk} we have
$$\|\BBG_s[\chi_{B_0}u^q]\|_{L^\theta(\Om)}\leq c\|\BBG_s[\chi_{B_0}u^q]\|_{M^\frac{N}{N-2s}(\Om)}\leq c'\|\chi_{B_0}u\|_{L^q(\Om)}^q,$$
for every $1<\theta<\frac{N}{N-2s}$. This in turn implies $\BBG_s[\chi_{B_0}u^q],\, 
	\BBG_s[\chi_{B_0}v^p] \in L^\theta(B_0)$ for every $1<\theta<\frac{N}{N-2s}$.  This and \eqref{decomp} -- \eqref{Ginfty} yield $u, v \in L^\theta(B_2)$ for every $1<\theta<\frac{N}{N-2s}$.  Set,
	$$\ell_0:=\frac{1}{2}(1+\frac{N_s}{p}),\quad \tilde \ell_0:=\frac{1}{2}(1+\frac{N_s}{q}).$$
	Then $1<p\ell_0, \, q\tilde \ell_0<N_s<\frac{N}{N-2s}$ and hence $u \in L^{q\tilde \ell_0}(B_{2})$ and $v\in L^{p\ell_0}(B_2)$.  Without loss of generality, we can assume $\ell_0,\, \tilde \ell_0\not=\frac{N}{2s}$. If  $\ell_0,\, \tilde \ell_0>\frac{N}{2s}$, then by Lemma \ref{regularity2} (i), $\BBG_s[\chi_{B_2}v^p], \, \BBG_s[\chi_{B_2}u^q] \in L^\infty(B_{2})$. This and \eqref{decomp} -- \eqref{Ginfty} imply $u, \, v \in L^\infty(B_4)$. If $\ell_0<\frac{N}{2s}$ or $\tilde \ell_0<\frac{N}{2s}$, then by Lemma \ref{regularity2} (ii) we obtain $\BBG_s[\chi_{B_2}v^p] \in L^{p\ell_1}(B_{2})$ or $\BBG_s[\chi_{B_2}u^q] \in L^{q\tilde \ell_1}(B_{2})$ respectively, where 
	$$ \ell_1:=\frac{1}{p}\frac{N\ell_0}{N-2\ell_0s}, \quad \tilde \ell_1:=\frac{1}{q}\frac{N\tilde \ell_0}{N-2\tilde \ell_0s}.
	$$
	Then from \eqref{decomp} -- \eqref{Ginfty}, $v \in L^{p\ell_1}(B_4)$ or $u \in L^{q \tilde \ell_1}(B_4)$. We have
	$$ \frac{\ell_1}{\ell_0}=\frac{1}{p}\frac{N}{N-2\ell_0s}>\frac{1}{p}\frac{N}{N-2s}>\ell_0.
	$$
	This implies that $\ell_1>\ell_0^2>\ell_0>1$. Similarly, $\tilde \ell_1>\tilde \ell_0^2>\tilde \ell_0>1$. Now if $\ell_1$ or $\tilde \ell_1\not=\frac{N}{2s}$, then continuing the bootstrap method as in the proof of \cite[Theorem 1.6]{BN}), we can conclude that 
	$u, \, v \in L^\infty(B_{2(k+1)})$. Consequently,  $u, \, v \in L_{loc}^\infty(\Gw)$.  
	\vspace{2mm}
	
	(ii) If $\mu,\nu \in L^r(\Gw)\cap L_{loc}^\infty(\Gw)$ then by part (i), we have  $v^p+\mu,\, u^q+\nu \in L^\infty_{loc}(\Om)$. Further, as $u, v\in L^1(\Om)$,  applying Schauder estimate \cite{RS1}, we have $u, v\in C^{\alpha}_{loc}(\Om)$, for some $\alpha\in(0,2s)$. 
	
	\end{proof}

\section{Construction of a second solution}

In this section we assume $1<p \leq q <N_s$. Then it follows that  $q\frac{p+1}{q+1}<N_s$. Using Linking theorem, we will construct a second weak solution of \eqref{source-3} when $\mu,\nu \in L^r(\Omega)$, for $r>\frac{N}{2s}$ with $\|\mu\|_{L^r(\Omega)}=\|\nu\|_{L^r(\Omega)}=1$.


By Theorem \ref{existcoup-1}, if $\rho>0$ and $\tau>0$ are small then there exists the minimal positive week solution, denoted by $(\underline u_{\rho \mu}, \underline v_{\tau \nu})$,  of \eqref{source-3}.
 We would like to apply Linking theorem to find a variational weak solution of 
\begin{equation} \label{MP-prob} \left\{  
\begin{aligned}
(-\De)^s u &=(\underline v_{\tau \nu}+v^+)^p- \underline v_{\tau \nu}^p \quad &&\text{in }\Om\\
(-\De)^s v &=(\underline u_{\rho \mu}+u^+)^q- \underline u_{\rho \mu}^q \quad &&\text{in }\Om\\
u &= 0=v &&\text{in } \Om^c,
\end{aligned} \right.
\end{equation}
where $u^+:=\max(u, 0)$ and $u^-:=-\min(0, u)$. 

From Remark \ref{Linf}, we observe that  there exists a constant $M>0$ such that
\be\lab{29-5-2}\max\{\underline u_{\rho \mu}, \underline v_{\tau \nu} \}<M \quad \text{in } \Omega. \ee

Define
\be\label{eq:X0}
X_0:=\{w\in H^s(\Rn): w=0\quad \text{in}\quad \Rn\setminus\Om\},
\ee
where $H^s(\Rn)$ is the standard fractional Sobolev space on $\Rn$. It is well-known that 
\be\label{norm-X}
\|w\|_{X_0}:=\displaystyle\left(\int_{Q}\f{|w(x)-w(y)|^2}{|x-y|^{N+2s}}dxdy\right)^\f{1}{2},
\ee
where $Q=\R^{2N}\setminus (\Om^c\times\Om^c)$, is a norm on $X_0$ and $(X_0, ||.||_{X_0})$ is a Hilbert space, with the inner product 
$$\<\phi, \psi\>_{X_0}:=\int_{Q}\f{(\phi(x)-\phi(y))(\psi(x)-\psi(y))}{|x-y|^{N+2s}}dxdy.$$ 
Put $$2^*_s:=\frac{2N}{N-2s}.$$
It is easy to check that (see \cite{RS3}) 
$$\Iom \psi(-\De)^s \phi\, dx=\int_{\Rn}(-\De)^\frac{s}{2}\phi(-\De)^\frac{s}{2}\psi \, dx\quad \forall \phi, \psi\in X_0.$$
It is also well known that the embedding $X_0\hookrightarrow L^r(\Rn)$ is compact, for any $r\in[1, 2_s^*)$ 
and  $X_0\hookrightarrow L^{2_s^*}(\Rn)$ is continuous. 


\begin{definition} \label{defstable}
	We say that a solution $(u,v)$ of \eqref{eq:system} is stable (resp. semistable)  if 
	\begin{equation} \label{stabex} \left\{  \begin{aligned} \| \gf \|_{X_0}^2 > \,(\textrm{resp.\,}\geq)\,\,  p\int_{\Om}v^{p-1}\phi^2 dx, \\
	\| \gf \|_{X_0}^2 > \,(\textrm{resp.\,}\geq)\,\,  q\int_{\Om}u^{q-1}\phi^2 dx, 
	\end{aligned} \right.  
	\quad \forall\, \phi \in X_0 \setminus \{0\}.
	\end{equation}
\end{definition}

\begin{proposition} \label{propstable}
	Assume $p,q\in(1, N_s)$ and $\mu, \nu$ are positive functions in $L^r(\Gw)$ for some $r>\frac{N}{2s}$ such that $\| \mu \|_{L^r(\Omega)}= \| \nu \|_{L^r(\Omega)}=1$. For $\rho>0$ and $\tau>0$ small,  let $(\underline u_{\rho \mu},\underline v_{\tau \nu})$ be the minimal solution of \eqref{source-3} obtained in Theorem \ref{existcoup-1}. There exists $t_0 >0$ such that if $\max\{ \rho,\tau \}<t_0$ then $(\underline u_{\rho \mu},\underline v_{\tau \nu})$ is stable. Moreover, there exists a positive constant $C=C(N,s,p,q,t_0)$ such that
	\begin{equation} \label{strictstab}  \left\{ \begin{aligned} 
	\|\phi\|^2_{X_0}- p\int_{\Om}\underline v_{\tau \nu}^{p-1}\phi^2 dx\geq C\|\phi\|^2_{X_0},  \\
	\|\phi\|^2_{X_0}- q\int_{\Om}\underline u_{\rho \mu}^{q-1}\phi^2 dx\geq C\|\phi\|^2_{X_0},		
	\end{aligned} \right.
	\quad \forall\, \gf \in X_0 \setminus \{ 0 \}. 
	\end{equation}
\end{proposition}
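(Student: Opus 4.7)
The plan is to reduce the two stability inequalities to a simple $L^\infty$ decay estimate for the minimal solution $(\underline u_{\rho\mu},\underline v_{\tau\nu})$, combined with the fractional Poincar\'e inequality on $X_0$. The crucial observation is that the exponents $p-1,q-1$ are strictly positive, so the quadratic forms $\int \underline v_{\tau\nu}^{p-1}\phi^2\,dx$ and $\int \underline u_{\rho\mu}^{q-1}\phi^2\,dx$ carry a prefactor that vanishes with the minimal solution in $L^\infty$.

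The first step is to extract this $L^\infty$ decay. Since $\mu,\nu\in L^r(\Gw)$ with $r>N/(2s)$ and $\|\mu\|_{L^r(\Gw)}=\|\nu\|_{L^r(\Gw)}=1$, H\"older's inequality gives $\mu,\nu\in\GTM^+(\Gw,\gd^s)$ with bounded $\gd^s$-mass, so after a harmless renormalization we may invoke Theorem \ref{existcoup-1}. The hypotheses $1<p\leq q<N_s$ in particular ensure $q<N_s$, so estimate \eqref{leub} applies and yields
$$\max\{\underline u_{\rho\mu},\underline v_{\tau\nu}\}\leq K\,\BBG_s[\mu+\nu]\quad\text{a.e.\ in }\Gw,$$
with $K=K(\rho,\tau)\to 0$ as $(\rho,\tau)\to(0,0)$. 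Because $\mu+\nu\in L^r(\Gw)$ with $r>N/(2s)$, Lemma \ref{regularity2}(i) gives $\BBG_s[\mu+\nu]\in L^\infty(\Gw)$ with norm controlled by $\|\mu+\nu\|_{L^r(\Gw)}\leq 2$. Hence there exists a constant $C_0=C_0(N,s,r,\Gw)$ such that
$$\|\underline u_{\rho\mu}\|_{L^\infty(\Gw)}+\|\underline v_{\tau\nu}\|_{L^\infty(\Gw)}\leq C_0\,K,$$
and the right-hand side tends to $0$ as $(\rho,\tau)\to(0,0)$.

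The second step is to insert this bound into the quadratic forms in \eqref{stabex}. For any $\phi\in X_0\setminus\{0\}$, since $p>1$ one has
$$p\int_\Gw \underline v_{\tau\nu}^{\,p-1}\phi^2\,dx\leq p\,\|\underline v_{\tau\nu}\|_{L^\infty(\Gw)}^{p-1}\,\|\phi\|_{L^2(\Gw)}^{2},$$
and analogously with $(p,\underline v_{\tau\nu})$ replaced by $(q,\underline u_{\rho\mu})$. The Poincar\'e inequality associated with the first Dirichlet eigenvalue $\la_1>0$ of $(-\De)^s$ on $X_0$ gives $\|\phi\|_{L^2(\Gw)}^{2}\leq \la_1^{-1}\|\phi\|_{X_0}^{2}$, turning both quadratic forms into constants times $\|\phi\|_{X_0}^{2}$ with constants tending to $0$ with $\rho,\tau$. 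Choosing $t_0>0$ small enough that
$$p\,\la_1^{-1}(C_0K)^{p-1}\leq \frac{1}{2}\quad\text{and}\quad q\,\la_1^{-1}(C_0K)^{q-1}\leq \frac{1}{2}$$
hold for all $\max\{\rho,\tau\}<t_0$ yields \eqref{strictstab} with $C=1/2$, and in particular the strict stability property \eqref{stabex}.

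The substantive content is already packaged into the pointwise bound \eqref{leub} and the $L^\infty$-regularity of the Green potential from Lemma \ref{regularity2}(i); after that the argument collapses to a one-line Poincar\'e estimate, so there is no real obstacle. The only minor care needed is to track the two normalizations ($L^r$-normalized here versus $\gd^s$-measure normalized in Theorem \ref{existcoup-1}), but the change of normalization multiplies the parameters by a fixed bounded factor and can be absorbed into the choice of $t_0$.
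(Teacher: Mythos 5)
Your proposal is correct, and its first half is exactly the paper's Step 1: both arguments combine the pointwise bound \eqref{leub} with Lemma \ref{regularity2}(i) (this is precisely Remark \ref{Linf}) to get $\|\underline u_{\rho\mu}\|_{L^\infty(\Gw)}+\|\underline v_{\tau\nu}\|_{L^\infty(\Gw)}\to 0$ as $(\rho,\tau)\to(0,0)$, and then a Poincar\'e/Sobolev bound $\|\phi\|_{L^2(\Gw)}^2\le c\|\phi\|_{X_0}^2$. The genuine difference is in how the quantitative estimate \eqref{strictstab} is extracted. The paper's Step 1 only asks the $L^\infty$ smallness to beat the borderline constants $1/p$, $1/q$ (so it yields no margin), and the margin is then produced in a second step by a comparison argument: with $\rho'=(\rho+t_0)/2$, $\tau'=(\tau+t_0)/2$ and $\alpha=\max\{(\rho/\rho')^{1/q},(\tau/\tau')^{1/p}\}<1$, one shows $\underline u_{\rho\mu}\le\alpha\,\underline u_{\rho'\mu}$, $\underline v_{\tau\nu}\le\alpha\,\underline v_{\tau'\nu}$ via Lemma \ref{compa}, and the stability of the primed solution gives \eqref{strictstab} with $C=\min\{1-\alpha^{p-1},1-\alpha^{q-1}\}$. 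You instead build the margin directly into the choice of $t_0$ (requiring $p\,\lambda_1^{-1}\|\underline v_{\tau\nu}\|_{L^\infty}^{p-1}\le\tfrac12$ and similarly for $u$), which yields \eqref{strictstab} in one stroke with $C=\tfrac12$; this makes the paper's comparison step unnecessary and even gives a constant that, unlike $\min\{1-\alpha^{p-1},1-\alpha^{q-1}\}$, does not degenerate as $\max\{\rho,\tau\}$ approaches $t_0$. Two bookkeeping remarks: your $C_0$ is not purely $(N,s,r,\Gw)$-dependent, since Theorem \ref{existcoup-1} is normalized in $\GTM(\Gw,\gd^s)$, so after renormalizing $\mu,\nu$ the right-hand side of \eqref{leub} involves the renormalized measures and the resulting constant (hence $t_0$) also depends on the fixed data $\mu,\nu$; this is harmless because $K\to0$, and the paper makes the same silent identification in Remark \ref{Linf}. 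Likewise the eigenvalue $\lambda_1$ must be taken relative to the $X_0$-norm (which differs from $\int_{\R^N}|(-\Gd)^{s/2}\phi|^2dx$ by a fixed factor), which only changes constants.
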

\begin{proof}
	{\bf Step 1: }We show that there exists $t_0>0$  such that $(\underline u_{\rho\mu},\underline v_{\tau\nu})$ is stable provided $\max\{ \rho,\tau \}<t_0$.
	
	Indeed,  from Remark \ref{Linf}, 
	it follows that for any $\phi\in X_0 \setminus \{0\}$, there exists $t_0>0$ small such that if $\max\{ \rho,\tau  \}<t_0$, there hold
	$$ \begin{aligned} \int_{\Gw}  \underline v_{\tau \nu}^{p-1}\phi^2\, dx\leq \|\underline v_{\tau \nu}\|_{L^\infty(\Om)}^{p-1} \int_{\Gw}\phi^2dx\leq \frac{1}{p}\|\phi\|^2_{X_0}, \\
	\int_{\Gw}  \underline u_{\rho \mu}^{q-1}\phi^2\, dx\leq \|\underline u_{\rho \mu}\|_{L^\infty(\Om)}^{q-1} \int_{\Gw}\phi^2dx\leq \frac{1}{q}\|\phi\|^2_{X_0}.
	\end{aligned} $$
	This completes Step 1.\medskip

	\noindent {\bf Step 2:}  We prove \eqref{strictstab}.  Assume $(\rho,\tau) \in (0,t_0) \times (0,t_0)$ and put $$ \rho'=\frac{\rho+t_0}{2}, \quad \tau'=\frac{\tau+t_0}{2}. $$ 
	Set 
	$$ \ga=\max\left \{  \big(\frac{\rho}{\rho'}\big)^\frac{1}{q}, \big(\frac{\tau}{\tau'}\big)^\frac{1}{p} \right\}<1. $$ 
	Let $(\unl u_{\rho' \mu},\unl v_{\tau' \nu})$ and $(\unl u_{\rho \mu},\unl v_{\tau \nu})$ be the solutions of \eqref{source-3} with data $(\rho' \mu, \tau' \nu)$ and $(\rho \mu, \tau \nu)$ respectively. Since $p,\,q>1$ and $\al<1$, it is easy to see that 
	$$ \begin{aligned}
	\ga \unl u_{\rho' \mu} &=\BBG_s[\ga \unl v_{\tau' \nu}^p] + \BBG_s[\ga \rho' \mu] \geq \BBG_s[(\ga \unl v_{\tau'\nu})^p] + \BBG_s[\rho \mu], \\
	\ga \unl v_{\tau' \nu} &=\BBG_s[\ga \unl u_{\rho' \mu}^q] + \BBG_s[\ga \tau' \nu ] \geq \BBG_s[(\ga \unl u_{\rho' \mu})^q] + \BBG_s[\tau \nu]. \\
	\end{aligned} $$
	Consequently, in view of the proof of Lemma \ref{compa}, we deduce $\ga \unl u_{\rho' \mu}\geq \unl u_{\rho \mu}$ and $\ga \unl v_{\tau' \nu}\geq \unl v_{\tau \nu}$. Furthermore, since $(\rho', \tau') \in (0,t_0) \times (0,t_0)$, by Step 1, we assert that $(\unl u_{\rho'\mu}, \unl v_{\tau' \nu})$ is stable. Therefore,
	\begin{equation} \label{stab2}  \begin{aligned} 
	0<\|\phi\|^2_{X_0}- p\int_{\Gw} \unl v_{\tau' \nu}^{p-1}\phi^2 dx  &\leq \|\phi\|^2_{X_0}- p\ga^{1-p}\int_{\Gw} \unl v_{\tau \nu}^{p-1}\phi^2 dx\\
	&=\al^{1-p}\big(\al^{p-1}\|\phi\|^2_{X_0}- p\int_{\Om}\underline v_{\tau \nu}^{p-1}\phi^2 dx\big).
	\end{aligned} \end{equation}
	Hence,
	\begin{equation} \begin{aligned}
	\|\phi\|^2_{X_0}- p\int_{\Gw} \unl v_{\tau \nu}^{p-1}\phi^2 dx &= (1-\al^{p-1})\|\phi\|^2_{X_0}+\al^{p-1}\|\phi\|^2_{X_0}- p\int_{\Gw} \unl v_{\tau \nu}^{p-1}\phi^2 dx\\
	&>(1-\ga^{p-1})\|\phi\|^2_{X_0}.
	\end{aligned} \end{equation}
	Similarly, one can prove
	$$ \|\phi\|^2_{X_0}- q\int_{\Gw} \unl u_{\rho \mu}^{q-1}\phi^2 dx >(1-\ga^{q-1})\|\phi\|^2_{X_0}.
	$$	
	Hence \eqref{strictstab} holds with $C=\min\{ 1-\ga^{p-1}, 1- \ga^{q-1} \}$.
\end{proof}

The norm of an element $z=(u,v)\in X_0\times X_0$ is defined by
$$\|z\|_{X_0\times X_0}:=\|(u,v)\|_{X_0\times X_0}=\big(\|u\|^2_{X_0}+\|v\|^2_{X_0}\big)^\frac{1}{2}.$$

\begin{definition}
	Let $(X, \|.\|_{X})$ be a real Banach space with its dual $(X^*, \|.\|_{X^*})$ and $I\in C^1(X,\R)$. For $c\in\R$, we say that $I$ satisfies Cerami condition at level $c$ (in short, $(C)_c$) if for any  sequence $\{w_n\}\subset X$ with
	$$I(w_n)\to c, \quad \|I'(w_n)\|_{X^*}(1+\|w_n\|_X)\to0,$$
	there is a subsequence $\{w_{n_k}\}$ of $\{w_n\}$ such that $\{w_{n_k}\}$ converges strongly in $X$.
	
	We say that $\{w_n\}\subset X$ is a Palais-Smale sequence of $I$ at level $c$ if $$I(w_n)\to c, \quad \|I'(w_n)\|_{X^*}\to0.$$
\end{definition}

The energy functional associated to \eqref{MP-prob} is
\begin{equation} \label{I} \begin{aligned}
 I(u, v)&:=\displaystyle\int_{\Rn\times\Rn}\frac{\big(u(x)-u(y)\big)\big(v(x)-v(y)\big)}{|x-y|^{N+2s}}dxdy-\int_{\Om} H( \underline v_{\tau \nu}, v) dx \\
&\qquad-\displaystyle\int_{\Om} \tilde H(\underline u_{\rho \mu}, u) dx \quad\forall\, (u, v)\in X_0\times X_0,
\end{aligned} \end{equation}
where 
\begin{equation}\begin{aligned}
&H(r,t):= \frac{1}{p+1}\bigg[(r+t^+)^{p+1}-r^{p+1}-(p+1)r^p t^+\bigg],\\
&\tilde H(r,t):= \frac{1}{q+1}\bigg[(r+t^+)^{q+1}-r^{q+1}-(q+1)r^q t^+\bigg], \quad r \geq 0.
\end{aligned}\end{equation}
Therefore,
\Bea
I'(u,v)(\phi,\psi)&=\displaystyle\int_{\Rn\times\Rn}\frac{\big(\phi(x)-\phi(y)\big)\big(v(x)-v(y)\big)}{|x-y|^{N+2s}}dxdy\\
&\quad+\displaystyle\int_{\Rn\times\Rn}\frac{\big(u(x)-u(y)\big)\big(\psi(x)-\psi(y)\big)}{|x-y|^{N+2s}}dxdy\\
&\quad-\displaystyle\int_{\Om} h(\underline v_{\tau \nu}, v)\psi dx-\int_{\Om} \tilde h(\underline u_{\rho \mu}, u)\phi dx,
\Eea
where
 $$h(r,t):=(r+t^+)^p-r^p \quad\text{and}\quad \tilde h(r,t):=(r+t^+)^q-r^q, \quad r \geq 0.$$
It is easy to see that if $z=(u,v)$ is a critical point of $I$ then $(u,v)$ solves \eqref{MP-prob}. We will find these critical points using Linking Theorem in the spirit of \cite{FMR}.

\begin{lemma}\lab{l:30-5-3} (i) There hold
	\be\lab{20-12-17-3} \qquad \frac{1}{p+1}t^{p+1} < H(r,t), \quad  \frac{1}{q+1}t^{q+1}< \tilde H(r,t) \quad\text{for}\quad r,\, t> 0.\ee
	
	(ii) Given any $M>0$, there exist $\theta>2$ and $T>0$ such that 
	\be\lab{20-12-17-2}\begin{aligned}
		H(r,t)\leq \frac{1}{\theta}h(r,t)t, \quad\tilde H(r,t)\leq \frac{1}{\theta}\tilde h(r,t)t, \quad\text{for}\quad 0\leq r\leq M,\,\, t\geq T,
	\end{aligned}\ee
	where $T$ depend on $M, p, q, \theta$.
	
	(iii) Let $0<\kappa<p+1$, then there exists a constant $C=C(p,q,\kappa)>0$ such that
	\be\lab{l:4-9-1}H(r, t), \, \tilde H(r,t)\geq t^\kappa-C  \quad\text{for}\quad r,\, t> 0. \ee
\end{lemma}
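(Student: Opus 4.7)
The plan is to handle the three parts in order, treating only $H$ in detail since the argument for $\tilde H$ is obtained verbatim by replacing $p$ with $q$ (and using $\kappa<p+1\leq q+1$ in part (iii)). I use the standing hypothesis $1<p\leq q<N_{s}$ of Section 5 throughout.

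For (i), I will rewrite the nonlinearity as an integral. The fundamental theorem of calculus gives $(r+t)^{p+1}-r^{p+1}=(p+1)\int_{0}^{t}(r+s)^{p}\,ds$, whence
$$H(r,t)=\int_{0}^{t}\bigl[(r+s)^{p}-r^{p}\bigr]\,ds,\qquad r,t\geq 0,$$
while $\tfrac{t^{p+1}}{p+1}=\int_{0}^{t}s^{p}\,ds$. Thus (i) reduces to the pointwise strict superadditivity $(r+s)^{p}-r^{p}>s^{p}$ for $r,s>0$ and $p>1$. Setting $g(s):=(r+s)^{p}-s^{p}$ one computes $g'(s)=p\bigl[(r+s)^{p-1}-s^{p-1}\bigr]>0$, so $g(s)>g(0)=r^{p}$; integrating this strict inequality on $(0,t)$ yields (i).

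For (ii), I fix $\theta\in(2,p+1)$, which is possible because $p>1$; in particular $\theta<p+1\leq q+1$. The intuition is that both $h(r,t)t$ and $H(r,t)$ behave like constant multiples of $t^{p+1}$ as $t\to\infty$ uniformly in $r\in[0,M]$, with respective leading coefficients $1$ and $1/(p+1)$, so the ratio $\theta H/(h\cdot t)$ tends to $\theta/(p+1)<1$. To make this rigorous, for $0\leq r\leq M$ and $t\geq 1$ standard binomial remainder estimates give
$$(r+t)^{p+1}=t^{p+1}+(p+1)rt^{p}+R_{1}(r,t),\qquad (r+t)^{p}=t^{p}+R_{2}(r,t),$$
with $|R_{i}(r,t)|\leq C(M,p)\,t^{p-1}$ for $i=1,2$. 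Plugging into the definitions of $H$ and $h$ gives
$$h(r,t)t-\theta H(r,t)=\Bigl(1-\frac{\theta}{p+1}\Bigr)t^{p+1}+E(r,t),\qquad |E(r,t)|\leq C'(M,p)\,t^{p}.$$
Since the leading coefficient is strictly positive, choosing $T=T(M,p,q,\theta)$ large enough to dominate the $t^{p}$ remainder yields the first estimate of (ii) for $t\geq T$ uniformly in $r\in[0,M]$; the same reasoning for $\tilde H$ with $q$ in place of $p$ (using $\theta<q+1$) gives the second, after possibly enlarging $T$.

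Part (iii) follows cheaply from (i). Since $0<\kappa<p+1\leq q+1$, the continuous functions $f_{p}(t):=t^{\kappa}-\tfrac{t^{p+1}}{p+1}$ and $f_{q}(t):=t^{\kappa}-\tfrac{t^{q+1}}{q+1}$ both tend to $-\infty$ as $t\to\infty$, hence attain a finite supremum $C=C(p,q,\kappa)$ on $[0,\infty)$. Combining this with (i) gives $H(r,t)>\tfrac{t^{p+1}}{p+1}\geq t^{\kappa}-C$ and $\tilde H(r,t)>\tfrac{t^{q+1}}{q+1}\geq t^{\kappa}-C$. The only delicate step in the lemma is (ii), where the main obstacle is controlling the binomial remainders uniformly in $r\in[0,M]$; this uniformity is precisely what forces the hypothesis $r\leq M$ to appear in the statement.
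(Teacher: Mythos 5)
Your proof is correct, and in substance it follows the same strategy as the paper: in (ii) both arguments expand $h(r,t)t-\theta H(r,t)$ around large $t$ and use that the leading coefficient $1-\tfrac{\theta}{p+1}$ is positive (with $\theta\in(2,p+1)$) to dominate the lower-order terms uniformly for $0\leq r\leq M$, and in (iii) both reduce to the elementary bound $t^{\kappa}\leq\tfrac{1}{p+1}t^{p+1}+C$ combined with (i). The differences are worth noting, though: for (i) the paper simply cites \cite[Lemma C.2(ii)]{NS}, whereas you give a short self-contained proof via the representation $H(r,t)=\int_0^t\bigl[(r+s)^p-r^p\bigr]ds$ and the strict superadditivity $(r+s)^p-r^p>s^p$; and for (ii) the paper writes a termwise binomial expansion of $y(r,t)=h(r,t)t-\theta_1H(r,t)$ whose displayed finite form is literally valid only for integer $p$, while your Taylor-with-remainder estimates ($|R_i(r,t)|\leq C(M,p)t^{p-1}$ for $r\leq M$, $t\geq 1$) make the same dominance argument rigorous for arbitrary real $p>1$, at the cost of a slightly longer bookkeeping of the error term $E(r,t)$. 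A minor cosmetic point: the paper takes separate $\theta_1\in(2,p+1)$, $\theta_2\in(2,q+1)$ and then $\theta=\min\{\theta_1,\theta_2\}$, while you observe directly that a single $\theta\in(2,p+1)\subset(2,q+1)$ serves both inequalities; both are fine since $h(r,t)t,\tilde h(r,t)t\geq 0$.
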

\begin{proof}
	 (i) Estimate \eqref{20-12-17-3} was proved in  \cite[Lemma C.2(ii)]{NS}. 
	 
	(ii) First let us choose $\theta_1\in (2, p+1)$ arbitrarily and fix it. Next, we define
	$$y(r, t):=h(r, t)t-\theta_1 H(r,t).$$
	From the definition of $h(r,t)$ and $H(r,t)$, a straight forward computation yields that
$$y(r,t)=t^2\bigg[p\big(1-\frac{\theta_1}{2}\big)r^{p-1}+\frac{p(p-1)}{2}\big(1-\frac{\theta_1}{3}\big)r^{p-2}t+\cdots+\big(1-\frac{\theta_1}{p+1}\big)t^{p-1}\bigg].$$	
Therefore, there exits $0<T= T(p, M, \theta_1)$ such that $y(r,t)>0$ for $t\geq T,\, r\leq M$. Similarly we can prove the other inequality by choosing $\theta_2\in(2, q+1)$.  Then by take $\theta=\min\{\theta_1,\theta_2\}$, we obtain \eqref{20-12-17-2}.
	
	(iii) Since $\kappa<p+1\leq q+1$, applying Young's inequality, we have
	$$t^\kappa \leq\frac{1}{p+1}t^{p+1}+c_1 \quad\text{and}\quad t^\kappa \leq \frac{1}{q+1}t^{q+1}+c_2$$
	where $c_1=c_1(\kappa,p)$ and $c_2=c_2(\kappa,q)$.
	Taking $C=\max\{c_1, c_2\}$, it follows $$\frac{1}{p+1}t^{p+1}, \, \frac{1}{q+1}t^{q+1}\geq t^\kappa-C. $$
	Combining this with (i), \eqref{l:4-9-1} follows.  
\end{proof}
\begin{remark}
	Combining  Lemma \ref{l:30-5-3} along with the fact that $H(r,t)=0$, for $t\leq 0$, it holds 
	\be\lab{30-5-4}
	H(r,t)\geq 0, \quad \tilde H(r,t)\geq 0, \quad \forall\, t\in\R , \, \forall r\geq 0.
	\ee
\end{remark}

We also observe that  (\cite[Lemma C.2(iii)]{NS}) for any $\eps>0$, there exists $c_{\eps}>0$, such that 
\be\lab{20-12-17-1} H(r,t)-\frac{p}{2}r^{p-1}t^2\leq \eps r^{p-1}t^2+c_{\eps}t^{p+1}, \quad r,\, t\geq 0.\ee

{\bf Notation}: For the rest of this section, we denote by $\| \cdot \|$, the norm in $X_0$, by $\|(\cdot,\cdot)\|$ the norm in $X_0\times X_0$ and by $\<\cdot,\cdot\>$ the inner product in $X_0$. \medskip

Next, we prove that $I$ has the geometry of the Linking theorem. 

\subsection {Geometry of the Linking Theorem}
We define,
$$E^+:=\{(u, u)\,:\, u\in X_0\}\quad\text{and}\quad E^-:=\{(u, -u)\,:\, u\in X_0\}.$$
\begin{lemma}\lab{l:30-5-1}
	There exist $\varrho,\, \si>0$ such that $I(u, v)\geq\si$ for all $(u,v)\in S:=\pa B_{\varrho}\cap E^+$.
\end{lemma}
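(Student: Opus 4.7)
The plan is as follows. Every element of $E^+$ has the form $(u,u)$ with $u\in X_0$, and the bilinear cross term in \eqref{I} reduces to $\|u\|^2$, so
\[
I(u,u) = \|u\|^2 - \int_\Omega H(\underline v_{\tau\nu}, u)\,dx - \int_\Omega \tilde H(\underline u_{\rho\mu}, u)\,dx.
\]
Since $H(r,\cdot)$ and $\tilde H(r,\cdot)$ vanish on $(-\infty,0]$, only $u^+$ contributes to the nonlinear part. My strategy is to split these nonlinear integrals into a quadratic piece, matched to the linearization at $(\underline u_{\rho\mu},\underline v_{\tau\nu})$, plus a super-quadratic remainder, then invoke the strict stability estimate \eqref{strictstab} to absorb the quadratic piece while treating the remainder as higher order for small $\varrho$.

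Concretely, I apply \eqref{20-12-17-1} pointwise with $r=\underline v_{\tau\nu}(x)$ and $t=u^+(x)$, together with its $q$-analogue for $\tilde H$ (proved identically to \cite[Lemma C.2(iii)]{NS}): for any $\eps>0$,
\[
\int_\Omega H(\underline v_{\tau\nu}, u)\,dx \leq \Bigl(\tfrac{p}{2}+\eps\Bigr)\int_\Omega \underline v_{\tau\nu}^{p-1}u^2\,dx + c_\eps \int_\Omega (u^+)^{p+1}\,dx,
\]
and similarly for $\tilde H$ with $q$, $\underline u_{\rho\mu}$, $(u^+)^{q+1}$ replacing $p$, $\underline v_{\tau\nu}$, $(u^+)^{p+1}$ (using $(u^+)^2\leq u^2$ in the quadratic term).

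Next I feed $\phi = u$ into \eqref{strictstab} from Proposition \ref{propstable} to obtain
\[
\tfrac{p}{2}\int_\Omega \underline v_{\tau\nu}^{p-1}u^2\,dx \leq \tfrac{1-C}{2}\|u\|^2, \qquad \tfrac{q}{2}\int_\Omega \underline u_{\rho\mu}^{q-1}u^2\,dx \leq \tfrac{1-C}{2}\|u\|^2,
\]
where $C\in(0,1)$. The $\eps$-contributions are bounded, using $\underline u_{\rho\mu},\underline v_{\tau\nu}\leq M$ from \eqref{29-5-2} and the Sobolev/Poincar\'e inequality on $X_0$, by $\eps\,C_1(M)\|u\|^2$, which for $\eps$ small is absorbed into $\tfrac{C}{2}\|u\|^2$. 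The super-quadratic remainders $\int(u^+)^{p+1}$ and $\int(u^+)^{q+1}$ are controlled by the continuous embeddings $X_0\hookrightarrow L^{p+1}, L^{q+1}$, valid since $p+1,q+1\leq N_s+1=\tfrac{2N}{N-s}\leq 2_s^*$.

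Assembling everything gives
\[
I(u,u) \geq \tfrac{C}{4}\|u\|^2 - C'\bigl(\|u\|^{p+1}+\|u\|^{q+1}\bigr)
\]
with $C,C'>0$ independent of $u$. As $p,q>1$, the negative terms are of strictly higher order, so for $\|(u,u)\|=\varrho$, i.e.\ $\|u\|=\varrho/\sqrt 2$, with $\varrho$ sufficiently small the right-hand side is bounded below by some $\sigma>0$. The only delicate point is the absorption of the $\eps$-perturbation of the quadratic form; this is possible precisely because Proposition \ref{propstable} provides \emph{strict} stability, not merely semistability.
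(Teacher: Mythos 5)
Your proof is correct and follows essentially the same route as the paper: on $E^+$ you make the same decomposition of $I(u,u)$ into the stability quadratic forms plus remainders controlled by \eqref{20-12-17-1}, then use the strict stability \eqref{strictstab}, the uniform bound \eqref{29-5-2}, and the Sobolev embedding to absorb the $\eps$-terms and conclude by choosing $\eps$ and $\varrho$ small, exactly as in the paper's argument.
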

\begin{proof}
	From the definition of $I(u,u)$, we have
	\Bea
	I(u, u)&=&\frac{1}{2}\bigg(\|u\|^2-p\Iom \underline v_{\tau \nu}^{p-1}u^2dx\bigg)+\frac{1}{2}\bigg(\|u\|^2-q\Iom \underline u_{\rho \mu}^{q-1}u^2dx\bigg)\\
	&&\qquad-\bigg(\Iom H(\underline v_{\tau \nu}, u)dx-\frac{p}{2}\Iom \underline v_{\tau \nu}^{p-1}u^2 dx\bigg) -\bigg(\Iom\tilde H(\underline u_{\rho \mu}, u)dx-\frac{q}{2}\Iom \underline u_{\rho \mu}^{q-1}u^2dx \bigg).
	\Eea
	Applying \eqref{20-12-17-1} and \eqref{strictstab}  to the above line and using \eqref{29-5-2} and Sobolev inequality, we obtain 
	\Bea
	I(u, u)&\geq& C\|u\|^2-\eps\Iom \underline v_{\tau \nu}^{p-1}u^2dx-C_\eps\Iom u^{p+1}dx-\eps\Iom \underline u_{\rho \mu}^{q-1}u^2dx-C_\eps\Iom u^{q+1}dx\\
	&\geq&(C-M^{p-1}S^{-1}\eps-M^{q-1}S^{-1}\eps)\|u\|^2-C\|u\|^{p+1}-C\|u\|^{q+1},
	\Eea
	where $S$ is the Sobolev constant. Now, choosing $\eps>0$ and $\varrho>0$ small enough, we find one $\sigma>0$ such that 
	$I(u, u)\geq \si$ when $\|u\|=\varrho$, as $p, \, q>1$. This proves the lemma.
\end{proof}

Let $\psi_0\in X_0$ be a fixed nonnegative function with $\|\psi_0\|=1$ and
$$Q_{\psi_0}:=\{r(\psi_0, \psi_0)+w\,:\, w\in E^-, \|w\|\leq R_0,\, 0\leq r\leq R_1 \}.$$

\begin{lemma}\lab{l:30-5-2}
	There exist constants $R_0,\, R_1>0$, which depend on $\psi_0$, such that
	$I(u, v)\leq 0$ for all $(u,v)\in \pa Q_{\psi_0}$.
\end{lemma}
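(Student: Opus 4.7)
The plan is to parameterize every $(u,v)\in Q_{\psi_0}$ as $(u,v)=(r\psi_0+\phi,\,r\psi_0-\phi)$ with $r\in[0,R_1]$ and $\phi\in X_0$ satisfying $\sqrt{2}\,\|\phi\|\leq R_0$ (because $\|(\phi,-\phi)\|=\sqrt{2}\,\|\phi\|$). Using $\|\psi_0\|=1$, the bilinear part of $I$ simplifies via an orthogonal expansion to $\<u,v\>=r^2-\|\phi\|^2$, so
\[
I(u,v) = r^2-\|\phi\|^2 - \int_\Omega H(\underline v_{\tau\nu},\,r\psi_0-\phi)\,dx - \int_\Omega \tilde H(\underline u_{\rho\mu},\,r\psi_0+\phi)\,dx.
\]
The boundary $\partial Q_{\psi_0}$, taken relative to $\mathrm{span}\{(\psi_0,\psi_0)\}\oplus E^-$, splits into three pieces: (a) $r=0$ with $\sqrt{2}\,\|\phi\|\leq R_0$; (b) $\sqrt{2}\,\|\phi\|=R_0$ with $0\leq r\leq R_1$; (c) $r=R_1$ with $\sqrt{2}\,\|\phi\|\leq R_0$.

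Faces (a) and (b) are immediate from the nonnegativity $H,\tilde H\geq 0$ of \eqref{30-5-4}: on (a), $I\leq -\|\phi\|^2\leq 0$; on (b), $I\leq R_1^2-R_0^2/2$, which is nonpositive as soon as $R_0\geq \sqrt{2}\,R_1$. The entire difficulty lies on face (c), where the quadratic $r^2=R_1^2$ is strictly positive and must be absorbed by the nonlinear terms.

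For (c) I would fix $\kappa\in(2,p+1)$, which is possible precisely because $p>1$. Lemma~\ref{l:30-5-3}(iii) gives $H(r,t),\tilde H(r,t)\geq t^\kappa-C$ for $r,t>0$, and extends trivially to $H(r,t),\tilde H(r,t)\geq (t^+)^\kappa-C$ for all $t\in\R$ using $H,\tilde H\geq 0$ (for $t\leq 0$ the right-hand side is $-C$ while the left-hand side is $0$). The key elementary fact is the pointwise symmetrization inequality
\[
(a+b)_+^\kappa+(a-b)_+^\kappa\geq 2a^\kappa \quad \text{for all } a\geq 0,\ b\in\R,\ \kappa\geq 1,
\]
which follows from convexity of $x\mapsto x^\kappa$ on $[0,\infty)$ when $|b|\leq a$ and is trivial when $|b|>a$ (one summand is $(|b|+a)^\kappa\geq (2a)^\kappa\geq 2a^\kappa$). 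Applying it pointwise with $a=R_1\psi_0(x)\geq 0$ and $b=\phi(x)$ and integrating yields
\[
\int_\Omega H(\underline v_{\tau\nu},R_1\psi_0-\phi)\,dx + \int_\Omega \tilde H(\underline u_{\rho\mu},R_1\psi_0+\phi)\,dx \geq 2R_1^\kappa\,\|\psi_0\|_{L^\kappa(\Omega)}^\kappa - 2C|\Omega|,
\]
hence on face (c),
\[
I(u,v) \leq R_1^2 + 2C|\Omega| - 2R_1^\kappa\,\|\psi_0\|_{L^\kappa(\Omega)}^\kappa.
\]

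Since $\kappa>2$ and $\|\psi_0\|_{L^\kappa(\Omega)}>0$, I would now choose $R_1=R_1(\psi_0)$ large enough so the right-hand side is nonpositive, and then set $R_0=\sqrt{2}\,R_1$, which simultaneously closes face (b). The main obstacle is clearly face (c): the superquadratic gain $R_1^\kappa\gg R_1^2$, which is exactly what drives the Linking geometry, hinges on the choice $\kappa>2$ and therefore on the hypothesis $p>1$; the symmetrization inequality is the technical heart that converts the separate lower bounds on $H$ and $\tilde H$ into a lower bound \emph{independent of $\phi$}, so that the constraint $\|\phi\|\leq R_0/\sqrt{2}$ never has to be invoked on this face.
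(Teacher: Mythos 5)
Your proposal is correct and follows essentially the same route as the paper: the same three-face decomposition of $\pa Q_{\psi_0}$, nonnegativity of $H,\tilde H$ on the faces $r=0$ and $\|w\|=R_0$ with $R_0\geq\sqrt{2}R_1$, and on the face $r=R_1$ the lower bound of Lemma \ref{l:30-5-3}(iii) combined with convexity. Your pointwise inequality $(a+b)_+^\kappa+(a-b)_+^\kappa\geq 2a^\kappa$ is exactly the paper's "convexity of $\xi$" step, just verified more explicitly.
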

\begin{proof}
	We note that boundary $\pa Q_{\psi_0}$ of the set $Q_{\psi_0}$ is taken in the space $\R(\psi_0, \psi_0)\oplus E^-$ and consists of three parts. We estimate $I$ on these parts as below.
	
	\vspace{2mm}
	
	\noi {\bf Case 1}: $z\in\pa Q_{\psi_0}\cap E^-$ and of the form $z=(u,-u)\in E^-$. Then, thanks to \eqref{30-5-4}, it follows
	$$I(z)=-\|u\|^2-\Iom H(\underline v_{\tau \nu}, -u)dx-\Iom\tilde H(\underline u_{\rho \mu}, u)dx\leq 0.$$
	
	\noi {\bf Case 2}: $z=R_1(\psi_0, \psi_0)+(u,-u)\in \pa Q_{\psi_0}$ with $\|(u, -u)\|\leq R_0$. Thus,	
	\be\lab{4-9}I(z)=R_1^2\|\psi_0\|^2-\|u\|^2-\Iom H(\underline v_{\tau\nu}, R_1\psi_0-u)dx-\Iom\tilde H(\underline u_{\rho\mu},  R_1\psi_0+u)dx.\ee
	
	For $2<\kappa<p+1 \leq q+1$, set 
	\begin{equation*} \xi(t): =\left\{ \begin{aligned} 
t^\kappa \quad &&\text{if }\, t\geq 0, \\
0 \quad &&\text{if }\, t<0. 
\end{aligned} 
\right. \end{equation*}
	Then, applying \eqref{30-5-4} and \eqref{l:4-9-1} to \eqref{4-9}, we get
	\Bea I(z)&\leq& R_1^2-\Iom \xi(R_1\psi_0-u)dx-\Iom \xi(R_1\psi_0+u)dx+C
	\Eea
where $C=C(p,q,\kappa)$ is the constant in \eqref{l:4-9-1}.	

Now, using convexity of the function $\xi$, we obtain
	$$ I(z)\leq R_1^2-2R_1^\kappa\Iom |\psi_0|^\kappa dx + C.$$ 
	Therefore, since $\kappa>2$, taking $R_1$ large enough (depending on $\psi_0$), it follows that $I(z)\leq 0$.

	\vspace{2mm}
	
	\noi {\bf Case 3}: $z=r(\psi_0, \psi_0)+(u,-u)\in \pa Q_{\psi_0}$ with $\|(u, -u)\|= R_0$ and $0\leq r\leq R_1$.
	
	Then, using \eqref{30-5-4} it follows that
	$$I(z)\leq r^2\|\psi_0\|^2-\|u\|^2\leq R_1^2-\frac{1}{2}R_0^2.$$ Choosing $R_0\geq \sqrt{2}R_1$, we have $I(z)\leq 0$.
	
	Combining case 2 and case 3, in order that the geometry of Linking theorem holds, we choose $R_0$, $R_1$ large enough with $R_0\geq \sqrt{2}R_1$.
		\end{proof}

Our next aim is to prove that Cerami sequences are bounded.

\begin{proposition}\lab{p:30-5-1}
	Let $(u_m, v_m)\in X_0\times X_0$ such that
	
	(i) $I(u_m, v_m)=c+\de_m$, where $\de_m\to 0$ as $m\to\infty$.
	
	(ii) $(1+\|(u_m, v_m)\|)|I'(u_m, v_m)(\phi,\psi)|\leq \eps_m \|(\phi, \psi)\|$ for $\phi,\psi\in X_0\times X_0$ and $\eps_m\to 0$ as $m\to\infty$.
	Then,
	\begin{eqnarray*}
		\|u_m\|\leq C, &\quad  \|v_m\|\leq C\\
		\Iom h(\underline v_{\tau \nu}, v_m)v_mdx\leq C, &\quad \displaystyle\Iom \tilde h(\underline u_{\rho \mu}, u_m)u_mdx\leq C\\
		\Iom H(\underline v_{\tau \nu}, v_m)dx\leq C, &\quad \displaystyle\Iom \tilde H(\underline u_{\rho \mu}, u_m)dx\leq C.
	\end{eqnarray*}
\end{proposition}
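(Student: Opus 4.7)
The plan is to exploit the Cerami condition in two stages: first extract integral bounds of the form $\int_\Omega v_m^{p+1}\, dx, \int_\Omega u_m^{q+1}\, dx\leq C$, and then feed these back into the Euler--Lagrange identity, tested against carefully chosen pairs, to control $\|u_m\|$ and $\|v_m\|$.

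\textbf{Step 1 (integral bounds).} A direct computation gives
$$2I(u_m,v_m)-I'(u_m,v_m)(u_m,v_m)=\int_\Omega[h(\underline v_{\tau\nu},v_m)v_m-2H(\underline v_{\tau\nu},v_m)]\,dx+\int_\Omega[\tilde h(\underline u_{\rho\mu},u_m)u_m-2\tilde H(\underline u_{\rho\mu},u_m)]\,dx,$$
because the quadratic cross term $\langle u_m,v_m\rangle$ cancels with a factor $2$. By (i) and (ii), the left-hand side is bounded in absolute value by $2|c|+2\delta_m+\epsilon_m$. For the right-hand side I invoke Lemma~\ref{l:30-5-3}(ii) with $M$ as in \eqref{29-5-2}: there exist $\theta>2$ and $T=T(M,p,q,\theta)$ such that on $\{v_m\geq T\}$ one has $h v_m-2H\geq (1-\tfrac{2}{\theta})h(\underline v_{\tau\nu},v_m)v_m$, while the elementary convexity inequality $(\underline v_{\tau\nu}+v_m)^p-\underline v_{\tau\nu}^p\geq v_m^p$ (valid for $p\geq 1$) gives $h(\underline v_{\tau\nu},v_m)v_m\geq v_m^{p+1}$ there. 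On $\{0\leq v_m<T\}$ the integrand is bounded, and on $\{v_m<0\}$ it vanishes. Exactly the same analysis applies to $\tilde h u_m-2\tilde H$. I conclude that $\int_\Omega v_m^{p+1}\,dx+\int_\Omega u_m^{q+1}\,dx\leq C$.

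\textbf{Step 2 (auxiliary bounds).} From the mean-value estimate $h(\underline v_{\tau\nu},v_m)\leq p(M+v_m^+)^{p-1}v_m^+\leq C(v_m^++(v_m^+)^p)$ and H\"older's inequality, Step~1 and $\underline v_{\tau\nu}\leq M$ yield $\int h(\underline v_{\tau\nu},v_m)v_m\,dx\leq C$. Combining with Lemma~\ref{l:30-5-3}(ii) (applied on $\{v_m\geq T\}$) and boundedness of $H$ on $\{0\leq v_m<T\}$ gives $\int H\,dx\leq C$. The analogous bounds for $\tilde h u_m$ and $\tilde H$ follow the same way.

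\textbf{Step 3 (norm bounds).} The decisive step is to test $I'$ asymmetrically. Taking $(\phi,\psi)=(v_m,0)$ gives
$$\|v_m\|^2=I'(u_m,v_m)(v_m,0)+\int_\Omega \tilde h(\underline u_{\rho\mu},u_m)v_m\,dx.$$
The first term is bounded in absolute value by $\epsilon_m\|v_m\|/(1+\|(u_m,v_m)\|)\leq \epsilon_m$. For the second term, $\tilde h(\underline u_{\rho\mu},u_m)\leq C(1+u_m^q)$, and since $q<N_s$ implies $q+1<\tfrac{2N}{N-s}<2^*_s$, the embedding $X_0\hookrightarrow L^{q+1}(\Omega)$ together with H\"older gives
$$\int_\Omega \tilde h(\underline u_{\rho\mu},u_m)v_m\,dx\leq C\bigl(|\Omega|^{q/(q+1)}+\|u_m\|_{L^{q+1}}^q\bigr)\|v_m\|_{L^{q+1}}\leq C\|v_m\|,$$
where $\|u_m\|_{L^{q+1}}$ is controlled by Step~1. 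Hence $\|v_m\|^2\leq \epsilon_m+C\|v_m\|$, which forces $\|v_m\|\leq C$. Testing instead at $(0,u_m)$ and using $p+1<2^*_s$ likewise yields $\|u_m\|\leq C$.

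\textbf{Main obstacle.} The crux is that the quadratic form $\langle u,v\rangle$ defining the principal part of $I$ is strongly indefinite, so the classical Mountain-Pass device of testing only at $(u_m,v_m)$ cannot produce separate bounds on $\|u_m\|$ and $\|v_m\|$; it only delivers integral bounds (Step~1). The remedy is the asymmetric testing $(v_m,0)$ and $(0,u_m)$, which turns the Hamiltonian structure of \eqref{source-3} to our advantage. The subcriticality assumption $q<N_s$ enters precisely here, to ensure the Sobolev embeddings required to estimate the nonlinear right-hand side.
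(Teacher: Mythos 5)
Your argument is correct, and its first stage is essentially the paper's: the combination $2I(u_m,v_m)-I'(u_m,v_m)(u_m,v_m)$ you use is exactly the sum of the paper's identities obtained by testing at $(u_m,0)$, $(0,v_m)$ and using (i) (which is how \eqref{29-5-1}--\eqref{30-5-8} are derived there), with Lemma \ref{l:30-5-3}(ii) and the $L^\infty$ bound \eqref{29-5-2} playing the same role. One small imprecision: since $u_m,v_m$ change sign, your Step 1 conclusion should read $\int_\Omega (v_m^+)^{p+1}\,dx+\int_\Omega (u_m^+)^{q+1}\,dx\leq C$; this is harmless because $h,\tilde h,H,\tilde H$ depend only on $t^+$, and in Step 3 only $\|u_m^+\|_{L^{q+1}}$ and $\|v_m^+\|_{L^{p+1}}$ are actually needed from Step 1. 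Where you genuinely diverge from the paper is the treatment of the coupling terms: the paper bounds $\int_\Omega h(\underline v_{\tau\nu},v_m)u_m\,dx$ and $\int_\Omega \tilde h(\underline u_{\rho\mu},u_m)v_m\,dx$ by \emph{constants} via Young's inequality, the pointwise bounds $\frac{1}{q+1}t^{q+1}\le \tilde H$, $\frac1{p+1}t^{p+1}\le H$ and the exponent relation $p(q+1)/q\le p+1$ (see \eqref{31-5-1}), so the final step reads $\|u_m\|^2\le \eps_m\|u_m\|+C$; you instead bound these terms by $C\|v_m\|$ (resp.\ $C\|u_m\|$) using H\"older together with the Sobolev embedding $X_0\hookrightarrow L^{q+1}(\Omega)$, valid since $q+1<2^*_s$, and close with the quadratic inequality $\|v_m\|^2\le\eps_m+C\|v_m\|$. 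Your route is a bit more standard and avoids the exponent bookkeeping with $p\le q$, at the price of invoking subcriticality and the Sobolev constant; the paper's route stays entirely at the level of the integral bounds already produced by the Cerami condition, which is why it never needs the embedding at this stage. Both are valid under the standing hypotheses $1<p\le q<N_s$.
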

\begin{proof}
	Choosing $(\phi,\psi)=(v_m,0)$ and $(\phi,\psi)=(0, u_m)$ in (ii), we have
	\begin{equation} \label{21-2-2} \begin{aligned}
	\left| \| v_m \|^2 - \int_{\Omega} \tilde h(\underline u_{\rho\mu},u_m)v_m dx\right| \leq \varepsilon_m \| v_m \|, \\
	\left| \| u_m \|^2 - \int_{\Omega} h(\underline v_{\tau\nu},v_m)u_mdx \right| \leq \varepsilon_m \| u_m \|. 
	\end{aligned} \end{equation}
	Now choosing $(\phi,\psi)=(u_m,0)$ and $(\phi,\psi)=(0, v_m)$ in (ii), we have
	\begin{equation} \label{21-2-2b} \begin{aligned}
	\left|\<u_m,v_m\>-\int_{\Omega} \tilde h(\underline u_{\rho \mu},u_m)u_m dx\right|\leq \varepsilon_m,\\
	\bigg|\<u_m,v_m\>-\Iom h(\underline v_{\tau \nu},v_m)v_m dx\bigg|\leq\varepsilon_m.
	\end{aligned} \end{equation}
	On the other hand, from (i), we obtain 
	\begin{equation}\label{21-2-3}
	\<u_m,v_m\>-\Iom H(\underline v_{\tau \nu}, v_m)dx-\Iom \tilde H(\underline u_{\rho \mu}, u_m)dx=c+\de_m.
	\end{equation}
	Combining \eqref{21-2-2} and \eqref{21-2-3} and using \eqref{20-12-17-2}, we get
	\Bea
	2c+2\de_m&=&\<u_m,v_m\>-2\Iom H(\underline v_{\tau \nu}, v_m)dx+\<u_m,v_m\>-2\Iom \tilde H(\underline u_{\rho \mu}, u_m)dx\\
	&\geq&-2\eps_m+\Iom \tilde h(\underline u_{\rho \mu},u_m)u_mdx+\Iom h(\underline v_{\tau \nu},v_m)v_mdx \\
	&&-2\Iom \tilde H(\underline u_{\rho \mu}, u_m)dx-2\Iom H(\underline v_{\tau \nu}, v_m)dx\\
	&\geq& -2\eps_m+(\theta-2)\int_{\Om\cap\{v_m>T\}} H(\underline v_{\tau \nu},v_m)dx+(\theta-2)\int_{\Om\cap\{u_m>T\}} \tilde H(\underline u_{\rho \mu},u_m)dx+ C,
	\Eea
	where we have used the fact 
	$$\int_{\Om\cap\{v_m\leq T\}}H(\underline v_{\tau\nu}, v_m)dx<C \quad \text{and} \quad \int_{\Om\cap\{u_m\leq T\}}\tilde H(\underline u_{\rho\mu}, v_m)dx<C,
	$$ 
	which follows from the definition of $h(r,t)$, $H(r,t)$ and \eqref{leub}. Therefore, 
	\begin{equation}\label{29-5-1}
	\int_{\Omega} H(\underline v_{\tau \nu},v_m)dx\leq C \quad\text{and}\quad \int_{\Omega}\tilde H(\underline u_{\rho \mu},u_m)dx\leq C.
	\end{equation}
	Using \eqref{20-12-17-2}, similarly it can be also shown that 
	\be\lab{30-5-8}
	\Iom h(\underline v_{\tau \nu}, v_m)v_mdx\leq C,\quad \Iom \tilde h(\underline u_{\rho \mu}, u_m)u_mdx\leq C.
	\ee
	Observe that $h(\underline v_{\tau \nu}, v_m)=0$ if $v_m\leq 0$ and $h(\underline v_{\tau \nu},v_m)u_m\leq 0$ if $u_m\leq 0$. Therefore applying Young's inequality, \eqref{20-12-17-3}, \eqref{29-5-1} and the fact that $\underline u_{\rho \mu}$ and $\underline v_{\tau \nu}$ are bounded (see \eqref{29-5-2}) yields
	\begin{equation} \label{31-5-1} \begin{aligned}
	\Iom h(\underline v_{\tau \nu},v_m)&u_mdx \leq \int_{\Om\cap\{v_m\geq 0,\, u_m\geq 0\}}h(\underline v_{\tau \nu},v_m)u_mdx\\
	&\leq\frac{1}{q+1}\int_{\Om\cap\{v_m\geq 0,\, u_m\geq 0\}}u_m^{q+1}dx+\frac{q}{q+1}\int_{\Om\cap\{v_m\geq 0,\, u_m\geq 0\}}h(\underline v_{\tau \nu},v_m)^\frac{q+1}{q}dx\\
	&\leq\Iom \tilde H(\underline u_{\rho \mu}, u_m)dx+ \frac{q}{q+1}\int_{\Om\cap\{v_m\geq 0,\, u_m\geq 0\}} (\underline v_{\tau \nu}+v_m^+)^\frac{p(q+1)}{q}dx\\
	&\leq C_1+C(q)\bigg(\Iom \underline v_{\tau \nu}^\frac{p(q+1)}{q}dx+ \int_{\Om\cap\{v_m\geq 0,\, u_m\geq 0\}} v_m^\frac{p(q+1)}{q}dx\bigg)\\
	&\leq C_1+ C_2+ C_3\bigg(\int_{\Om\cap\{v_m\geq 0,\, u_m\geq 0\}} v_m^{p+1}dx\bigg)^\frac{(q+1)p}{(p+1)q}|\Om|^\frac{q-p}{(p+1)q}\\
	&\leq C_1+ C_2+ C_4 \bigg(\int_{\Om\cap\{v_m\geq 0,\, u_m\geq 0\}}H(\underline v_{\tau \nu}, v_m)dx\bigg)^\frac{(q+1)p}{(p+1)q} <C.
	\end{aligned} \end{equation}
	In above estimate we have also used the fact $p \leq q$ implies $(q+1)p/q \leq p+1$. 
	
	Similarly we can show that $\Iom\tilde h(\underline u_{\rho \mu},u_m)v_mdx<C$. Therefore substituting back in \eqref{21-2-2}, we obtain $\|u_m\|\leq C$ and $\|v_m\|\leq C$. 
\end{proof}

\subsection{Finite dimensional problem} Since the functional $I$ is strongly indefinite and defined in infinite dimensional space, no suitable linking theorem is available. We therefore approximate \eqref{MP-prob} with a sequence of finite dimensional problems.

Associated to the eigenvalues $0<\la_1<\la_2\leq\la_3\leq\cdots\to\infty$ of $((-\De)^s, X_0)$, there exits an orthogonal basis $\{\va_1, \va_2, \cdots\}$  
of corresponding eigen functions in $X_0$ and $\{\va_1, \va_2, \cdots\}$ is an orthonormal basis for $L^2(\Om)$. We set
\begin{eqnarray*}
	E_n^+ &:=&\text{span}\{(\va_i, \va_i)\,:\, i=1,2, \cdots, n\},\\
	E_n^-&:=&\text{span}\{(\va_i, -\va_i)\,:\, i=1,2, \cdots, n\},\\
	E_n&:=&E_n^+\oplus  E_n^-.
\end{eqnarray*}
Let $\psi_0\in X_0$ be a fixed nonnegative function with $\|\psi_0\|=1$ and
$$Q_{n,\psi_0}:=\{r(\psi_0, \psi_0)+w\,:\, w\in E_n^-, \|w\|\leq R_0,\, 0\leq r\leq R_1 \},$$
where $R_0$ and $R_1$ are chosen in Lemma \ref{l:30-5-2}. Here we recall that these constants depend only on $\psi_0,\, p,\, q$. Next, define
$$H_{n,\psi_0}:=\R(\psi_0,\psi_0)\oplus E_n,\quad H^+_{n,\psi_0}:=\R(\psi_0,\psi_0)\oplus E_n^+,\quad H^-_{n,\psi_0}:=\R(\psi_0,\psi_0)\oplus E_n^-.$$
$$\Ga_{n,\psi_0}:=\{\pi\in C(Q_{n,\psi_0}, H_{n,\psi_0})\,:\, \pi(u, v)=(u,v)\,\, \text{on}\,\, \pa Q_{n,\psi_0}\},$$ and 
$$c_{n,\psi_0}:=\inf_{\pi\in \Ga_{n,\psi_0}}\max_{(u,v)\in Q_{n,\psi_0}}I\big(\pi(u,v)\big).$$
Using an intersection theorem (see \cite[Proposition 5.9]{R}), we have
$$\pi(Q_{n,\psi_0})\cap (\pa B_{\varrho}\cap E^+)\not=\emptyset, \quad\forall\, \pi\in\Ga_{n,\psi_0}.$$
Thus there exists  an $(u,v)\in (Q_{n,\psi_0})$ such that $\pi(u,v)\in \pa B_{\varrho}\cap E^+$. Combining this with Lemma \ref{l:30-5-1}, we get $I(\pi(u,v))\geq\si$. This in turn implies $c_{n,\psi_0}\geq \si>0$. Our next goal is to show that $c_{n,\psi_0}$ has an upper bound. For that, we observe that the identity map $\text{Id} : Q_{n,\psi_0}\to H_{n,\psi_0}$ is in $\Ga_{n,\psi_0}$. Thus for an element of the form $z:=r(\psi_0,\psi_0)+(u,-u)\in Q_{n,\psi_0}$, we compute
\Bea
I(z)&=&\<r\psi_0+u, r\psi_0-u\>-\Iom H(\underline v_{\tau\nu}, r\psi_0-u)dx-\Iom\tilde H(\underline u_{\rho\mu}, r\psi_0+u)dx\\
&=&r^2-\|u\|^2-\bigg[\Iom H(\underline v_{\tau\nu}, r\psi_0-u)dx+\Iom\tilde H(\underline u_{\rho\mu}, r\psi_0+u)dx\bigg]\leq R_1^2,\\
\Eea
where in the last inequality we have used \eqref{30-5-4}. Consequently,
$$ \max_{z \in Q_{n,\psi_0}}I(z) \leq R_1^2.
$$
Therefore,
$$ c_{n,\psi_0} \leq \max_{z \in Q_{n,\psi_0}}I(\text{Id}(z)) = \max_{z \in Q_{n,\psi_0}}I(z) \leq R_1^2.
$$
Hence $0<\si\leq c_{c,\psi_0}\leq R_1^2$. We remark here that upper and lower bound do not depend on $n$. Define,
$$I_{n,\psi_0}:= I\big|_{H_{n,\psi_0}}.$$
Thus, in view of Lemmas \ref{l:30-5-1} and \ref{l:30-5-2}, we see that geometry of Linking theorem holds for the functional $I_{n,\psi_0}$. Hence applying the linking theorem \cite[Theorem 5.3]{R} to $I_{n,\psi_0}$, we obtain a Palais-Smale sequence, which is bounded in view of Proposition \ref{p:30-5-1} (also see \cite[pg. 1046]{FMR}). Therefore, using the fact that $H_{n,\psi_0}$ is a finite dimensional space, we obtain the following proposition:

\begin{proposition}\lab{p:30-5-2}
	For every $n\in\N$ and for every  $\psi_0\in X_0$, a fixed nonnegative function with $\|\psi_0\|=1$, the functional $I_{n,\psi_0}$ has a critical point $z_{n,\psi_0}$ such that
	\be\lab{30-5-5}z_{n,\psi_0}\in H_{n,\psi_0}, \quad I_{n,\psi_0}'(z_{n,\psi_0})=0,\quad I_{n,\psi_0}(z_{n,\psi_0})=c_{n,\psi_0}\in[\si, R_1^2],\ee
	\be\lab{30-5-6}  \quad \|z_{n,\psi_0}\|\leq C,\ee
	where $C$ does not depend on $n$.
\end{proposition}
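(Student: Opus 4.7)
\medskip

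\noindent\textbf{Proof plan for Proposition \ref{p:30-5-2}.} The plan is to apply Rabinowitz's linking theorem \cite[Theorem 5.3]{R} to the restriction $I_{n,\psi_0}$ of $I$ to the finite dimensional Hilbert space $H_{n,\psi_0}$. All the ingredients of the linking geometry are already established: Lemma \ref{l:30-5-1} gives $I \geq \sigma$ on $\partial B_\varrho\cap E^+ \cap H_{n,\psi_0}$, Lemma \ref{l:30-5-2} gives $I\leq 0$ on $\partial Q_{n,\psi_0}$, and the intersection property $\pi(Q_{n,\psi_0})\cap(\partial B_\varrho\cap E^+)\neq\emptyset$ together with the uniform upper bound $c_{n,\psi_0}\leq R_1^2$ was derived in the paragraph just before the statement. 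Thus the linking theorem produces a Cerami sequence $\{w_k\}_{k\in\BBN}\subset H_{n,\psi_0}$, $w_k=(u_k,v_k)$, at level $c_{n,\psi_0}\in[\sigma,R_1^2]$.

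\medskip

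\noindent The key step is to derive a bound $\|w_k\|\leq C$ with $C$ independent of $k$ and of $n$. The estimates in Proposition \ref{p:30-5-1} cannot be used verbatim because the Cerami condition for $I_{n,\psi_0}$ only controls $I'(w_k)(\phi,\psi)$ for $(\phi,\psi)\in H_{n,\psi_0}$, whereas the proof of that proposition relied on the tests $(v_k,0)$, $(0,u_k)$, $(u_k,0)$, $(0,v_k)$, which generally do not lie in $H_{n,\psi_0}$. The crucial observation is the symmetric structure $H_{n,\psi_0}=\R(\psi_0,\psi_0)\oplus(V_n\times V_n)$, with $V_n=\mathrm{span}\{\varphi_1,\dots,\varphi_n\}$: whenever $(u_k,v_k)\in H_{n,\psi_0}$, one also has $(v_k,u_k)\in H_{n,\psi_0}$, and these two test pairs turn out to be exactly what is needed.

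\medskip

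\noindent First I would test $I'_{n,\psi_0}(w_k)$ with $(u_k,v_k)\in H_{n,\psi_0}$ to obtain
$$2\<u_k,v_k\>=\int_\Om h(\underline v_{\tau\nu},v_k)v_k\,dx+\int_\Om \tilde h(\underline u_{\rho\mu},u_k)u_k\,dx+o(1)\|w_k\|,$$
and combine this with $I(w_k)\to c_{n,\psi_0}$ to reproduce the argument in Proposition \ref{p:30-5-1}: using \eqref{20-12-17-2}, the boundedness of $\underline u_{\rho\mu}, \underline v_{\tau\nu}$ from \eqref{29-5-2}, and the uniform bound $c_{n,\psi_0}\leq R_1^2$, one obtains
$$\int_\Om H(\underline v_{\tau\nu},v_k)\,dx+\int_\Om\tilde H(\underline u_{\rho\mu},u_k)\,dx\leq C,$$
and therefore, via \eqref{20-12-17-3}, $\|v_k\|_{L^{p+1}(\Om)}+\|u_k\|_{L^{q+1}(\Om)}\leq C$. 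Next, testing $I'_{n,\psi_0}(w_k)$ with $(v_k,u_k)\in H_{n,\psi_0}$ yields
$$\|u_k\|^2+\|v_k\|^2=\int_\Om h(\underline v_{\tau\nu},v_k)u_k\,dx+\int_\Om \tilde h(\underline u_{\rho\mu},u_k)v_k\,dx+o(1)\|w_k\|.$$
The integrals on the right are controlled by Young's and H\"older's inequalities exactly as in \eqref{31-5-1}, crucially using $p\leq q$ so that $p(q+1)/q\leq p+1$. This gives $\|w_k\|^2\leq C+o(1)\|w_k\|$ and hence a uniform bound $\|w_k\|\leq C$.

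\medskip

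\noindent Once $\{w_k\}$ is bounded in the finite dimensional space $H_{n,\psi_0}$, a subsequence converges to some $z_{n,\psi_0}\in H_{n,\psi_0}$; continuity of $I$ and $I'_{n,\psi_0}$ then gives $I(z_{n,\psi_0})=c_{n,\psi_0}$ and $I'_{n,\psi_0}(z_{n,\psi_0})=0$, together with $\|z_{n,\psi_0}\|\leq C$, where $C$ is independent of $n$ because $c_{n,\psi_0}$ is controlled uniformly by $R_1^2$. The main obstacle in this argument is exactly the restriction of admissible test functions to $H_{n,\psi_0}$; it is the coordinate-swap symmetry of $H_{n,\psi_0}$ that makes the two-pair testing strategy above work and allows the bounds of Proposition \ref{p:30-5-1} to be recovered in the restricted setting.
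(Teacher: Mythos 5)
Your proposal follows essentially the same route as the paper: the linking geometry from Lemmas \ref{l:30-5-1} and \ref{l:30-5-2}, the uniform level bound $c_{n,\psi_0}\in[\si,R_1^2]$, boundedness of the resulting Palais--Smale sequence via the estimates behind Proposition \ref{p:30-5-1}, and compactness from the finite dimensionality of $H_{n,\psi_0}$. Your extra observation — that the test pairs $(v_k,0)$, $(0,u_k)$, $(u_k,0)$, $(0,v_k)$ used in Proposition \ref{p:30-5-1} need not belong to $H_{n,\psi_0}$, while the admissible pairs $(u_k,v_k)$ and the swapped pair $(v_k,u_k)$ (available because $H_{n,\psi_0}=\R(\psi_0,\psi_0)\oplus E_n$ and $E_n$ is invariant under swapping coordinates) yield precisely the summed identities the estimates need — is a legitimate tightening of a point the paper passes over by citing Proposition \ref{p:30-5-1} directly together with \cite[pg.~1046]{FMR}, not a different argument.
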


\subsection{Existence of solution of \eqref{MP-prob}.}

{\bf Step 1:} Let $\psi_0\in X_0$ be a fixed nonnegative function with $\|\psi_0\|=1$. Then applying Proposition \ref{p:30-5-2}, we get a sequence $\{z_{n,\psi_0}\}_{n=1}^\infty$ satisfying \eqref{30-5-5} and \eqref{30-5-6}. Consequently, there exists $(u_0, v_0)\in X_0\times X_0$ such that
\be\lab{30-5-7}z_{n,\psi_0}:=(u_{n,\psi_0}, v_{n,\psi_0})\rightharpoonup (u_0, v_0) \quad\text{in}\quad X_0\times X_0, 
\ee
\be\lab{30-5-7'}u_{n,\psi_0}\to u_0,\quad v_{n,\psi_0}\to v_0 \quad\text{in}\quad L^r(\Rn), \,1\leq r<2^*_s \quad\text{and a.e. in}\quad \Om.   
\ee
Further, applying Proposition \ref{p:30-5-1}, we conclude
\be\lab{30-5-9}
\Iom h(\underline v_{\tau \nu}, v_{n,\psi_0})v_{n,\psi_0}dx\leq C, \quad \displaystyle\Iom \tilde h(\underline u_{\rho \mu}, u_{n,\psi_0})u_{n,\psi_0}dx\leq C.
\ee
\be\lab{30-5-10}
\Iom H(\underline v_{\tau \nu}, v_{n,\psi_0})dx\leq C, \quad \displaystyle\Iom \tilde H(\underline u_{\rho \mu}, u_{n,\psi_0})dx\leq C.
\ee
Next, taking as test functions $(0,\psi)$ and $(\phi, 0)$ in \eqref{30-5-5}, where $\phi,\,\psi$ are arbitrary functions in $F_n:=\text{span}\{\va_i\,:\, i=1,2,\cdots, n\}$, we obtain
\be\lab{30-5-11}
\<\psi, u_{n,\psi_0}\>=\Iom h(\underline v_{\tau \nu}, v_{n,\psi_0})\psi dx\quad\forall\, \psi\in F_n,
\ee
\be\lab{30-5-12}
\<\phi, v_{n,\psi_0}\>=\Iom \tilde h(\underline u_{\rho \mu}, u_{n,\psi_0})\phi dx \quad\forall\, \phi\in F_n.
\ee
Now applying \eqref{29-5-2}, \eqref{30-5-7'} and the fact that $p<N_s<2^*_s$, we also have $h(\underline v_{\tau \nu}, v_{n,\psi_0})$ and $h(\underline v_{\tau \nu}, v_0)$ are $L^1$ functions. Therefore, using \eqref{30-5-9}, \eqref{30-5-7'} and an argument similar to the one used in \cite[Lemma 2.1]{FMR1}, it follows that 
$$h(\underline v_{\tau \nu}, v_{n,\psi_0})\to h(\underline v_{\tau \nu}, v_0), \quad\tilde h(\underline u_{\rho \mu}, u_{n,\psi_0})\to \tilde h(\underline u_{\rho \mu}, u_0)    \quad\text{in}\quad L^1(\Om).$$ 
Hence, taking the limit in \eqref{30-5-11} and \eqref{30-5-12} and using the fact that $\cup_{n=1}^\infty F_n$ is dense in $X_0$, it follows that
\be
\<\psi, u_{0}\>=\Iom h(\underline v_{\tau \nu}, v_0)\psi dx \quad \text{and}\quad \<\phi, v_0\>=\Iom \tilde h(\underline u_{\rho \mu}, u_0)\phi dx,\no
\ee
for all $\phi, \psi\in X_0$. As a consequence, 
\be\lab{31-5-3}(-\De)^s u_0=h(\underline v_{\tau \nu}, v_0) \quad \text{and}\quad (-\De)^s v_0=\tilde h(\underline u_{\rho \mu}, u_0),\quad u_0=v_0=0 \quad\text{in}\, \Om^c.\ee

\vspace{3mm}

{\bf Step 2:} In this step we show that $u_0$ and $v_0$ are nontrivial and nonnegative.

\vspace{2mm}

Suppose not, we assume $u_0\equiv 0$ in $X_0$. Plugging back to the equation \eqref{31-5-3}, it implies $(-\De)^s v_0=0$. As a consequence
$$0=\Iom v_0(-\De)^s v_0\, dx =\Iom |(-\De)^\frac{s}{2}v_0|^2\, dx= \|v_0\|_{X_0}^2.$$
Therefore, $v_0\equiv 0$ in $X_0$, that is, $u_{n, \psi_0}\rightharpoonup 0$ and $v_{n, \psi_0}\rightharpoonup 0$ in $X_0$. Consequently, $u_{n, \psi_0}\to 0$ and $v_{n, \psi_0}\to 0$ in $L^r(\Om)$ for $1\leq r<2^*_s$. Since $p+1,\, q+1<2^*_s$, computing as in \eqref{31-5-1} we obtain
\Bea
\lim_{n\to\infty}\Iom h(\underline v_{\tau \nu},v_{n,\psi_0})u_{n,\psi_0}dx &\leq&\lim_{n\to\infty}C\bigg(\int_{\Om\cap\{v_{n,\psi_0}\geq 0,\, u_{n,\psi_0}\geq 0\}}u_{n,\psi_0}^{q+1}dx\\
&&\qquad\qquad+\int_{\Om\cap\{v_{n,\psi_0}\geq 0,\, u_{n,\psi_0}\geq 0\}}h(\underline v_{\tau \nu},v_{n,\psi_0})^\frac{q+1}{q}dx\bigg)\\
&\leq&\lim_{n\to\infty} C \int_{\Om\cap\{v_{n,\psi_0}\geq 0\}}[(\underline v_{\tau \nu}+v_{n,\psi_0})^p-\underline v_{\tau \nu}^p]^\frac{q+1}{q}dx\\
&\leq&\lim_{n\to\infty} C\Iom \bigg[|v_{n,\psi_0}|^p+|\underline v_{\tau\nu}|^{p-1}|v_{n,\psi_0}| \bigg]^\frac{q+1}{q}dx\\
&\leq&\lim_{n\to\infty} C\bigg[\Iom |v_{n,\psi_0}|^{\frac{p}{q}(q+1)}dx+\Iom|\underline v_{\tau \nu}|^{\frac{1}{q}(p-1)(q+1)}|v_{n,\psi_0}|^\frac{q+1}{q}dx\bigg]\\
&=&0,
\Eea
where for the last inequality we have used the fact that $\frac{p}{q}(q+1) \leq p+1$ (since $p \leq q$), \eqref{29-5-2} and the fact that $\frac{q+1}{q}<2^*_s$ (since $\frac{N-2s}{N+2s}<1<q$). Similarly it follows that $\Iom \tilde h(\underline u_{\rho \mu},u_{n,\psi_0})v_{n,\psi_0}dx \to 0$. As a consequence, taking $\psi=u_{n,\psi_0}$
in \eqref{30-5-11} and $\phi=v_{n,\psi_0}$ in \eqref{30-5-12} yields $\|u_{n,\psi_0}\|\to 0$ and $\|v_{n,\psi_0}\|\to 0$ respectively. Hence, $u_{n, \psi_0}\to 0$ and $v_{n, \psi_0}\to 0$ strongly in $X_0$. This in turn, implies
$\<u_{n,\psi_0}, v_{n,\psi_0}\>\to (0, 0)$. 

Further, combining  \eqref{20-12-17-1} along with \eqref{30-5-4} and \eqref{29-5-2} yields $\Iom H(\underline v_{\tau \nu}, v_{n,\psi_0})dx\to 0$ and $\Iom\tilde H(\underline u_{\rho \mu}, u_{n,\psi_0})dx\to 0$. Hence, $c_{n,\psi_0}=I_{n,\psi_0}(u_{n,\psi_0}, v_{n,\psi_0})\to 0$. This is a contradiction to the fact that $c_{n,\psi_0}\in [\si, R_1^2]$. Therefore, $u_0, \, v_0$ are nontrivial.

Since $u_0\in X_0$, by direct computation it is easy to see that $u_0^+,\, u_0^-\in X_0$.  Thus, taking the test function as $u_0^-$ for the first equation in \eqref{31-5-3}  yields $0\leq \<u_0^+, u_0^-\> - \|u_0^-\|^2\leq -\|u_0^-\|^2$, i.e., $u_0\geq 0$ a.e.. Similarly, $v_0\geq 0$. As a result, step 2 follows.

Hence we obtain the existence of a nonnegative nontrivial solution $(u_0, v_0)\in X_0\times X_0$ of \eqref{MP-prob}. 

\subsection{Proof of Theorem \ref{2nd sol} completed} 	
	In order to construct the second solution of \eqref{source-3}, we define
	\be\lab{31-5-4}u_{\rho \mu}=\underline u_{\rho \mu}+u_0, \quad v_{\tau \nu}=\underline v_{\tau \nu}+v_0,\ee
	where $(u_0, v_0)$ are as defined in Step 1 of Section 5.3. 
	Clearly $u_{\rho \mu}\geq \underline u_{\rho \mu}$ and $v_{\tau \nu}\geq \underline v_{\tau \nu}$. Moreover, as $u$ and $v$ are nontrivial element in $X_0$, there exist two positive measure sets $\Om', \Om''\subset\Om$ such that $u_0>0$ in $\Om'$ and , $v_0>0$ in $\Om''$. Thus $u_{\rho \mu}>\underline u_{\rho \mu}$ in $\Om'$ and $v_{\tau \nu}>\underline v_{\tau \nu}$ in $\Om''$. Further, as $(u_0, v_0)\in X_0\times X_0$ is a solution of \eqref{MP-prob}, we have
	\begin{equation} \label{ru} 
	\<u_0, \psi\>=\Iom h(\underline v_{\tau \nu}, v_0)\psi dx, \quad \<v_0, \phi\>=\Iom\tilde h(\underline u_{\rho \mu}, u_0)\phi dx, \quad \forall\, \phi, \, \psi \in X_0.
	\end{equation}
	Set 
	$$ \CT(\Gw):=\{ \tilde\psi \in C^\infty(\Gw): \text{ there exists } \psi \in C_0^\infty(\Gw) \text{ such that } \tilde\psi=\BBG_s[\psi]  \}.  $$
	This is a space of test function defined in \cite[Page 41]{A}. By \cite[Lemma 5.6]{A}, $\CT(\Gw) \subset X_0$. Therefore, we deduce from \eqref{ru} that
	\begin{equation}\begin{aligned} \label{ru2} \int_{\Gw} u_0 \fw \psi \,dx &= \<u_0, \psi\> = \Iom h(\underline v_{\tau \nu}, v_0)\psi dx\quad \forall\, \psi \in \CT(\Gw),\\
	\int_{\Gw} v_0 \fw \phi \,dx &= \<v_0, \phi\> = \Iom \tilde h(\underline u_{\rho \mu}, u_0)\phi dx\quad \forall\, \phi \in \CT(\Gw).
	\end{aligned}\end{equation}
	Moreover, \cite[Lemma 5.12 and Lemma 5.13]{A} ensures that $\CT(\Gw) \subset \BBX_s(\Gw)$ and 
	\begin{equation}\begin{aligned} \label{ru3} \int_{\Gw} u_0 \fw \psi \,dx &= \Iom h(\underline v_{\tau \nu}, v_0)\psi dx\quad \forall\, \psi \in \BBX_s(\Gw),\\
	\int_{\Gw} v_0 \fw \phi \,dx &= \Iom\tilde h(\underline u_{\rho \mu}, u_0)\phi dx\quad \forall\, \phi \in \BBX_s(\Gw).
	\end{aligned}\end{equation}
	This means that $(u_0, v_0)$ is a weak solution of 
	\begin{equation} \label{ru3} \left\{  
	\begin{aligned}
	\fw u_0 &=(\underline v_{\tau \nu}+v_0)^p- \underline v_{\tau \nu}^p \quad \text{in }\Gw\\
	\fw v_0 &=(\underline u_{\rho \mu}+u_0)^q- \underline u_{\rho \mu}^q \quad \text{in }\Gw\\
	u_0=v_0 &= 0 \quad \text{in } \Gw^c,
	\end{aligned} \right.
	\end{equation}
	Hence,  $(u_{\rho \mu}, v_{\tau \nu})$, as defined in \eqref{31-5-4}, is clearly a weak solution of \eqref{source-3}. 

	If $\mu,\nu \in L^r(\Gw) \cap L_{loc}^\infty(\Gw)$ then by Theorem \ref{reg},  $u_{\rho \mu}, v_{\tau \nu}$, $\underline u_{\rho \mu}, \underline v_{\tau \nu}\in C^{\alpha}_{loc}(\Om)$, for some $\al\in(0,2s)$.  Therefore,  $u_0, v_0\in C^{\alpha}_{loc}(\Om)$. Also since we have $(-\De)^s u_0$, $(-\De)^s v_0\geq 0$ in $\Om$ and $0\not\equiv u_0, v_0\geq 0$ in $\Rn$, applying the strong maximum principle \cite[Proposition 2.17]{Si}, we have $u_0, v_0>0$ in $\Omega$. Hence from \eqref{31-5-4}, we deduce $u_{\rho \mu}>\underline u_{\rho \mu}$ and $v_{\tau \nu}>\underline v_{\tau \nu}$. In view of Remark \ref{l:3-9-1}, this completes the proof. \qed

\vspace{3mm}

{\bf Acknowledgement:} M. Bhakta is partially supported by  DST/INSPIRE 04/2013/000152 and MATRICS grant MTR/2017/000168.

\vspace{3mm}

\end{document}